\title{On minimal varieties growing from quasismooth weighted hypersurfaces}
\date{\today}
\author{Meng Chen}
\address{\rm School of Mathematical Sciences, Fudan University, Shanghai 200433, China}
\email{mchen@fudan.edu.cn}
\author{Chen Jiang}
\address{\rm Shanghai Center for Mathematical Sciences, Fudan University, 
Shanghai 200438, China}
\email{chenjiang@fudan.edu.cn}
\author{Binru Li}
\address{\rm School of Mathematics, Sichuan University, Chengdu 610064, China}
\email{binruli@scu.edu.cn}
\thanks{This project was supported by NSFC for Innovative Research Groups (\#12121001) and National Key Research and Development Program of China (\#2020YFA0713200). The first author was supported by National Natural Science Foundation of China (\#12071078,\#11731004). The second author was supported by Start-up Grant No.~SXH1414010. }
\newcommand{\bC}{{\mathbb C}}
\newcommand{\bQ}{{\mathbb Q}}
\newcommand{\bP}{{\mathbb P}}
\newcommand{\roundup}[1]{\lceil{#1}\rceil}
\newcommand{\rounddown}[1]{\lfloor{#1}\rfloor}
\newcommand\Vol{\text{\rm Vol}}
\newcommand\lrw{\longrightarrow}
\newcommand\OO{{\mathcal{O}}}
\newcommand\bZ{{\mathbb{Z}}}
\newcommand{\lsgeq}{\succcurlyeq}
\newcommand{\cone}{\text{\rm cone}}
\newcommand{\modr}{\text{\rm mod}}
\newtheorem{thm}{Theorem}[section]
\newtheorem{lem}[thm]{Lemma}
\newtheorem{prop}[thm]{Proposition}
\theoremstyle{definition}
\newtheorem{defn}[thm]{Definition}
\newtheorem{question}[thm]{Question}
\newtheorem{exmp}[thm]{Example}
\newtheorem{conj}[thm]{Conjecture}
\newtheorem{rem}[thm]{Remark}
\theoremstyle{remark}
\begin{document}
\begin{abstract} 
This paper concerns the construction of minimal varieties with small canonical volumes. The first part devotes to establishing an effective nefness criterion for the canonical divisor of a weighted blow-up over a weighted hypersurface, from which we construct plenty of new minimal $3$-folds including $59$ families of minimal $3$-folds of general type, several infinite series of minimal $3$-folds of Kodaira dimension $2$, $2$ families of minimal $3$-folds of general type on the Noether line, and $12$ families of minimal $3$-folds of general type near the Noether line. In the second part, we prove effective lower bounds of canonical volumes of minimal $n$-folds of general type with canonical dimension $n-1$ or $n-2$. Examples are provided to show that the theoretical lower bounds are optimal in dimension at most $5$ and nearly optimal in higher dimensions. 
\end{abstract}
\maketitle

\pagestyle{myheadings}
\markboth{\hfill M. Chen, C. Jiang, B. Li\hfill}{\hfill Minimal varieties growing from weighted hypersurfaces\hfill}
\numberwithin{equation}{section}

\section{Introduction}

In birational geometry, the minimal model program (in short, MMP) predicts that any projective variety is birationally equivalent to a minimal variety (i.e., a normal projective variety with a nef canonical divisor $K$ and with at worst $\bQ$-factorial terminal singularities) or a Mori fiber space. Though there still remain some challenging open problems such as the abundance conjecture, the MMP theory is very successful (see, e.g., \cite{KMM, K-M, BCHM}). On the other hand, from the point of view of the classification theory, the equally important question might be seeking out a concrete minimal model of the given variety and to calculate its birational invariants such as the canonical volume, plurigenus, holomorphic Euler characteristic and so on. 

The pioneering work of Reid in 1979 (\cite{Reid79}) gave the famous list of 95 weighted hypersurface K3 surfaces. Following Reid's strategy, Iano-Fletcher \cite{Fle00} provided several lists of weighted complete intersection $3$-folds which are minimal and have at worst terminal cyclic quotient singularities. A remarkable example from Iano-Fletcher's lists is the general weighted hypersurface of degree $46$, say $X_{46}\subset \bP(4,5,6,7,23)$, which has canonical volume as small as $\frac{1}{420}$ and canonical stability index $r_s(X_{46})=27$ and so, in particular, the $26$-canonical map of $X_{46}$ is non-birational. Among all known examples of $3$-folds of general type, this example has the smallest canonical volume and the largest canonical stability index,
and it may account for the complexity of higher dimensional birational geometry. In fact, intentionally constructing minimal varieties is an important but quite difficult problem. This kind of efforts appear in some established work such as \cite{JK01, CR02, Rei05, BKR, BR1, B-K, BR2} and so on. See also recent works for constructing examples \cite{TW, ETW}.

Another motivation for constructing minimal varieties, especially with small canonical volumes, is the following open question raised by Hacon and McKernan (see \cite[Problem 1.5]{H-M06}):

\begin{question}\label{P1} For any $n\geq 3$, find the optimal constant $r_n\in \bZ_{>0}$ such that, for any projective $n$-fold of general type, the $m$-canonical map is birational for all $m\geq r_n$.
\end{question}

It is known that Question~\ref{P1} (see also \cite[Problem 2]{EXPI}) is essentially equivalent to the following:

\begin{question}\label{P2} For any $n\geq 3$, find the optimal constant $v_n\in \bQ_{>0}$ such that, for any projective $n$-fold $Y$ of general type, the canonical volume $\Vol(Y)\geq v_n$. 
\end{question}

A naive way of constructing desired varieties could be, starting from a singular hypersurface in $\bP^n$, to obtain a smooth model by resolving singularities and to calculate their birational invariants. Unlike the surface case, this would be very difficult in higher dimensions as MMP and birational morphisms are more complicated than those in the surface case. 

In this paper, we consider a special construction starting from a well-formed quasismooth weighted hypersurface $X$ (see Subsection~\ref{subsec wps} for definitions), with only isolated singularities, which has exactly one non-canonical singularity $P$. One takes a weighted blow-up along the center $P\in X$ to obtain a higher model $Y$. 
To make sure that $Y$ is almost minimal, as the key observation, we have the following nefness criterion:

\begin{thm}\label{nefness}
Let $X=X^n_d\subset \mathbb{P}(b_1, \dots, b_{n+2})$ be an $n$-dimensional well-formed quasismooth general hypersurface of degree $d$ with $\alpha=d-\sum_{i=1}^{n+2}b_i>0$ where $b_1, \dots, b_{n+2}$ are not necessarily sorted by size. Denote by $x_1,\dots,x_{n+2}$ the homogenous coordinates of $\mathbb{P}(b_1, \dots, b_{n+2})$.
Denote by $\ell$ the line $(x_1=x_2=\dots =x_{n}=0)$ in $\mathbb{P}(b_1, \dots, b_{n+2})$. Suppose that $X\cap\ell$ consists of finitely many points and take $Q\in X\cap \ell$.
Assume that $X$ has a cyclic quotient singularity of type $\frac{1}{r}(e_1,\dots, e_n)$ at $Q$ where $e_1, \dots, e_n>0$, $\gcd(e_1, \dots, e_n)=1$, $\sum_{i=1}^ne_i<r$ and that $x_1,\dots,x_n$ are also the local coordinates of $Q$ corresponding to the weights $\frac{e_1}{r},\dots,\frac{e_n}{r}$ respectively. Let $\pi: Y\to X$ be the weighted blow-up at $Q$ with weight $(e_1,\dots, e_n)$.
Suppose that there exists an index $k\in \{1,\dots,n\}$ such that 
\begin{enumerate}
\item $\alpha e_{j} \geq b_{j}(r-\sum_{i=1}^ne_i)$ for each $j\in \{1, \dots, \hat{k},\dots, n\}$;
\item $\alpha dre_{k}\geq b_{k}b_{n+1}b_{n+2}(r-\sum_{i=1}^ne_i);$
\item a general hypersurface of degree $d$ in $\mathbb{P}(b_{k}, b_{n+1}, b_{n+2})$ is irreducible;
\item $\mathbb{P}(e_1,\dots,e_n)$ is well-formed.
\end{enumerate}
Then $K_Y$ is nef and $\nu(Y)\geq n-1$ where $\nu(Y)$ denotes the numerical Kodaira dimension. \end{thm}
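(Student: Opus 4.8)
The plan is to check $K_Y\cdot C\ge0$ for every curve $C\subset Y$, separating the $\pi$-contracted curves from the rest and, for the rest, peeling off the coordinate divisors one at a time until a single curve remains to be handled by hand.

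By adjunction $K_X=\mathcal{O}_X(\alpha)=\alpha H$ with $H=\mathcal{O}_X(1)$ an ample $\mathbb{Q}$-Cartier divisor, and the weighted blow-up $\pi\colon Y\to X$ at $Q$ has exceptional divisor $E\cong\mathbb{P}(e_1,\dots,e_n)$ and $K_Y=\pi^*K_X+aE$, where $a=\frac1r\sum_{i=1}^n e_i-1<0$ by the hypothesis $\sum e_i<r$. For a curve $C\subset E$ (equivalently, $C$ contracted by $\pi$), condition (4) makes $E$ well-formed so that $\mathcal{O}_E(-E)$ is ample; then $E\cdot C<0$ while $\pi^*K_X\cdot C=K_X\cdot\pi_*C=0$, hence $K_Y\cdot C=a(E\cdot C)>0$.

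Now let $C\not\subset E$, so $E\cdot C\ge0$. For $j\in\{1,\dots,n\}\setminus\{k\}$ put $D_j:=(x_j=0)\cap X\in|b_jH|$. Since $Q\in\ell\subset(x_j=0)$ and $x_j$ is a local coordinate at $Q$ of weight $e_j/r$, for general $X$ one has $\pi^*(b_jH)=\widetilde D_j+\frac{e_j}{r}E$ (strict transform), so
\[
K_Y\equiv\frac{\alpha}{b_j}\,\widetilde D_j+c_jE,\qquad c_j=\frac{\alpha e_j-b_j(r-\sum_i e_i)}{r\,b_j}\ge0
\]
by condition (1). If $C\not\subset\widetilde D_j$ for some such $j$, then $\widetilde D_j\cdot C\ge0$ and $E\cdot C\ge0$ give $K_Y\cdot C\ge0$. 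So it remains to handle curves contained in $\bigcap_{j\ne k}\widetilde D_j$; this scheme meets $E$ only at the vertex $P_k=[0\!:\!\cdots\!:\!1\!:\!\cdots\!:\!0]$ and away from $E$ it is the strict transform $\widetilde\Gamma$ of $\Gamma:=X\cap(x_j=0:\ j\le n,\ j\ne k)$, a degree-$d$ curve in $\mathbb{P}(b_k,b_{n+1},b_{n+2})$, which is irreducible by condition (3) applied to the general $X$. Thus only $C=\widetilde\Gamma$ is left. Here $\pi^*H\cdot\widetilde\Gamma=H\cdot\Gamma=\frac{d}{b_kb_{n+1}b_{n+2}}$ (cut $X$ by the $n-1$ hyperplanes $x_j=0$, $j\le n$, $j\ne k$, and use $H^{n+1}=\frac1{b_1\cdots b_{n+2}}$), and a chart computation shows $\Gamma$ is the $x_k$-axis near $Q$, so $\widetilde\Gamma$ meets $E$ transversally at the $\frac1{e_k}$-point $P_k$ with $E\cdot\widetilde\Gamma=\frac1{e_k}$. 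Therefore
\[
K_Y\cdot\widetilde\Gamma=\frac{\alpha d}{b_kb_{n+1}b_{n+2}}-\frac{r-\sum_i e_i}{r}\cdot\frac1{e_k}=\frac{\alpha dre_k-b_kb_{n+1}b_{n+2}(r-\sum_i e_i)}{r\,e_k\,b_kb_{n+1}b_{n+2}}\ge0
\]
by condition (2), completing the proof that $K_Y$ is nef. Finally, expanding $K_Y=\pi^*K_X+aE$ and using $\pi_*(E^i)=0$ for $1\le i\le n-1$ (as $\pi(E)$ is a point) kills every mixed term, so $K_Y^{n-1}\cdot\pi^*H=(\pi^*K_X)^{n-1}\cdot\pi^*H=K_X^{n-1}\cdot H=\alpha^{n-1}H^n>0$; since $K_Y$ is nef, this forces $K_Y^{n-1}\not\equiv0$, i.e.\ $\nu(Y)\ge n-1$.

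I expect the main obstacle to be the local geometry at $Q$: showing that for a general quasismooth $X$ the $D_j$ $(j\ne k)$ are reduced with local equation $x_j$, so $\mathrm{mult}_E D_j=e_j/r$ exactly; that $\Gamma$ is reduced and coincides with the $x_k$-axis near $Q$; and, most delicately, running the weighted blow-up charts for $\frac1r(e_1,\dots,e_n)$ to obtain $E\cdot\widetilde\Gamma=\frac1{e_k}$ (equivalently $(D_k\cdot\Gamma)_Q=\frac1r$ on $X$, combined with $\pi^*(b_kH)=\widetilde D_k+\frac{e_k}{r}E$). Keeping the weighted-projective intersection numbers and the blow-up multiplicities straight, and ensuring each use of the genericity of $X$ is legitimate, is where the real work lies; everything else is formal.
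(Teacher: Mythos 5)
Your overall architecture coincides with the paper's: use condition (4) to handle curves in $E$ (ampleness of $-E|_E$), condition (1) to write $K_Y$ as $\frac{\alpha}{b_j}\widetilde D_j+c_jE$ with $c_j\ge 0$ and peel off curves not contained in some $\widetilde D_j$, and condition (3) to reduce to a single irreducible curve lying over $X\cap \mathbb{P}(b_k,b_{n+1},b_{n+2})$; your argument for $\nu(Y)\ge n-1$ via $K_Y^{n-1}\cdot \pi^*H>0$ is a fine variant. The gap is in the one step you yourself flag as ``the real work'': the evaluation of $K_Y\cdot\widetilde\Gamma$ through the two separate claims $\pi^*H\cdot\widetilde\Gamma=\frac{d}{b_kb_{n+1}b_{n+2}}$ and $E\cdot\widetilde\Gamma=\frac{1}{e_k}$. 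Neither claim is proved, and in the generality of the theorem both are false. The theorem does not assume $\gcd(e_k,r)=1$; when $g:=\gcd(e_k,r)>1$ the $x_k$-axis lies in the singular locus of $X$ near $Q$, and the toric chart computation at the vertex $P_k$ (lattice $N=\mathbb{Z}^n+\mathbb{Z}\cdot\frac1r(e_1,\dots,e_n)$, cone spanned by $e_j^{\mathrm{std}}$, $j\ne k$, and $v$) gives $E\cdot\widetilde\Gamma=\frac{g}{e_k}$, not $\frac{1}{e_k}$. Likewise $\mathbb{P}(b_k,b_{n+1},b_{n+2})$ need not be well-formed (the paper's own Example with $\mathbb{P}(10,14,35)\cong\mathbb{P}^2$, where the ``degree $70$'' curve is a line), so $\deg_{\mathcal{O}_X(1)}\Gamma$ is not $\frac{d}{b_kb_{n+1}b_{n+2}}$ in general either. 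The two errors cancel --- both true values are the paper's divisor-class numbers divided by the same positive multiplicity $t$ of the $1$-cycle $\widetilde D_1\cdots\widehat{\widetilde D_k}\cdots\widetilde D_n$ along $\widetilde\Gamma$ --- so your final inequality happens to be equivalent to condition (2), but as written the two intermediate assertions are wrong and unjustified.

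The paper avoids exactly this local bookkeeping: instead of computing $K_Y\cdot\widetilde\Gamma$ directly, it notes that the $1$-cycle $(H_1'\cdots H_{n-1}')$ equals $t\,\widetilde\Gamma$ for \emph{some} $t>0$ (no need to know $t$, no transversality or reducedness claims), and then computes the intersection number of divisor classes
$\bigl(K_Y\cdot H_1'\cdots H_{n-1}'\bigr)=\frac{\alpha d}{b_kb_{n+1}b_{n+2}}-\frac{r-\sum_ie_i}{re_k}\ge 0$
by expanding $H_j'=\pi^*H_j-\frac{e_j}{r}E$, using $E^n=\frac{(-r)^{n-1}}{\prod_i e_i}$ and the vanishing of all mixed terms. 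If you replace your two local claims by this global expansion (or, equivalently, carry the unknown factor $t$ through both of your intersection numbers and observe it cancels), your proof becomes complete and is then essentially the paper's.
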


Under the setting of Theorem~\ref{nefness}, if moreover $Y$ has at worst canonical singularities, then $Y$ will correspond to a desired minimal variety of Kodaira dimension at least $n-1$, which is the case at least in dimension $3$. In Section~\ref{sec 4}, we provide $59$ families of concrete minimal $3$-folds of general type in Table~\ref{tableA}, Table~\ref{tableAp}, Table~\ref{tableC}, and Table~\ref{tableC+}. All of these examples are different from those found by Iano-Fletcher as they have Picard numbers at least $2$. Moreover most of our examples have different deformation invariants from those of known ones. Here we mention several very interesting minimal $3$-folds in our tables:
\begin{itemize}
\item[$\diamond$] The minimal models of both the general hypersurface of degree $13$ in $\bP(1,1,2,3,5)$ (see Table~\ref{tableA}, No.~2) and the general hypersurface of degree $15$ in $\bP(1,1,2,3,7)$ (see Table~\ref{tableA}, No.~3) have $p_g=2$ and $K^3=\frac{1}{3}$. These are new examples attaining minimal volumes among minimal $3$-folds of general type with $p_g\geq 2$, and they justify the sharpness of \cite[Theorem 1.4]{Chen07}. 
\item[$\diamond$] The minimal model of the general hypersurface of degree $40$ in 
$\bP(1,1,5,8,20)$ (see Table~\ref{tableC}, No.~7) is a smooth minimal $3$-fold with $p_g=7$ and $K^3=6$; the minimal model of the general hypersurface of degree $120$ in 
$\bP(1,1,17,24,60)$ (see Table~\ref{tableC}, No.~11) is a smooth minimal $3$-fold with $p_g=19$ and $K^3=22$. Both examples lie on the Noether line
$K^3=\frac{4}{3}p_g-\frac{10}{3}$. These are new examples which are not birationally equivalent to those constructed by Kobayashi \cite{Kob} and by Chen and Hu \cite{C-H} (see Remark~\ref{remark noether line}). 
\item[$\diamond$] The minimal model of the general hypersurface of degree $70$ in $\bP(1,1,10,14,35)$ (see Table~\ref{tableC}, No.~10) has $p_g=10$ and $K^3=\frac{301}{30}$, which lies very closely above the Noether line. One may refer to \cite{Noether, Noether_Add} for the importance of this example. 
\end{itemize}

Another interesting application of Theorem~\ref{nefness} is that one may find infinite series of families of minimal $3$-folds of Kodaira dimension $2$ in Tables~\ref{tab kod 2 6r}$\sim$\ref{tab kod 2 4r+6} (see Table~\ref{tableB} in Appendix~\ref{appendix} for more concrete examples). As far as we know, there are very few known examples of minimal $3$-folds of Kodaira dimension $2$ appearing in literature. These examples may be valuable in future study as they are useful in the classification theory of $3$-folds. It is also possible to provide a lot of higher dimensional examples. In Proposition~\ref{HD} and Lemma~\ref{dim+} we provide methods to construct higher dimensional minimal varieties starting from lower dimensions. 

In the second part of this paper, we mainly study Question~\ref{P2} for minimal varieties with high canonical dimensions. For a given minimal variety $Y$ of general type, the {\it canonical dimension} is defined as $\text{can.dim}(Y)=\dim \overline{\Phi_{|K_Y|}(Y)}$. 
The extremal case with $\text{can.dim}(Y)=\dim(Y)=n$ was first studied by Kobayashi \cite{Kob}, in which case the following optimal inequality holds:
$$K_Y^n\geq 2p_g(Y)-2n. $$
The above equality may hold when $Y$ is certain double cover over $\bP^n$. Hence it is natural to study those cases with $\text{can.dim}(Y)<n$. 

Our main theorems are as follows:
 
\begin{thm}\label{thm(n-1)} Let $Y$ be a minimal projective $n$-fold of general type with canonical dimension $n-1$ ($n\geq 3$). Then 
$$K_Y^n\geq \begin{cases}
\frac{2}{n-1}, &\text{ for }3\leq n\leq 5;\\
\frac{1}{(n-1)^2}\lceil \frac{8(n-2)}{3} \rceil, &\text{ for }n\geq 6.
\end{cases}$$ 
\end{thm}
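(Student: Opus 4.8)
The plan is to analyze the canonical map via a pencil-type fibration and run a careful induction/slicing argument. Let $Y$ be our minimal $n$-fold with $\text{can.dim}(Y)=n-1$. After taking a resolution $\pi\colon Y'\to Y$ (and possibly a birational modification so that the moving part of $|K_{Y'}|$ is base-point-free), let $f\colon Y'\to Z$ be the fibration obtained from the Stein factorization of the canonical map; here $Z$ is an $(n-1)$-dimensional variety and the general fiber $F$ of $f$ is a smooth curve. The first step is to collect the geometry of $F$: since $K_{Y'}$ restricted to $F$ is (up to the discrepancy corrections) a subsheaf of $K_F$ of degree at least $2p_g(F)-2\geq\cdots$, and more importantly $F$ carries a pencil's worth of canonical sections, one gets a lower bound on $g(F)$, hence on $(\pi^*K_Y\cdot F)$. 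I expect $(\pi^*K_Y\cdot F)\geq 2$ to be the crucial starting inequality, with equality analysis forcing very special geometry.

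Second, I would write $\pi^*K_Y \equiv a M + (\text{effective})$ where $M$ is the moving part, $M\equiv \sum$ of fibers of $f$ (plus a horizontal correction if $Z$ is not of general type, which must be handled separately), and use the projection formula together with $(M\cdot F)=0$, $(M^{n-1}\cdot F)=$ (degree of $Z$ under its own map) to bootstrap. The key numerical engine is: $K_Y^n = (\pi^*K_Y)^n \geq a\,(\pi^*K_Y)^{n-1}\cdot M \geq a\cdot(\text{something})\cdot(\pi^*K_Y\cdot F)$, and then one must lower-bound the multiplicity $a=\lfloor$ number of canonical sections vanishing along $F\rfloor$ and the "horizontal" intersection numbers. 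The casework $3\le n\le 5$ versus $n\ge 6$ strongly suggests that for small $n$ one can push all inequalities to equality (matching the examples claimed optimal in dimension $\le 5$), while for $n\ge 6$ the bound $\frac{1}{(n-1)^2}\lceil\frac{8(n-2)}{3}\rceil$ comes from a cruder estimate—likely $K_Y^n\geq \frac{1}{(n-1)^2}(\pi^*K_Y\cdot F)\cdot(\text{vol of }Z\text{ image})$ combined with a Noether-type inequality $\text{vol}\geq\frac{2}{3}(\dim+\text{something})$ applied on $Z$ or on a hyperplane section.

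Third, I would induct on $n$ via hyperplane sections: cut $Y$ by a general member of $|K_{Y'}|$ (or rather the moving part) to get an $(n-1)$-fold $Y_1$ whose canonical dimension is again one less than its dimension, and relate $K_Y^n$ to $K_{Y_1}^{n-1}$ via $K_Y^n = (\pi^*K_Y)^{n-1}\cdot M\cdot(\text{stuff}) \geq (\pi^*K_Y|_{Y_1})^{n-1}\geq \text{Vol}(Y_1)$, then apply the inductive hypothesis to $Y_1$. This reduces the whole estimate to the base case $n=3$ (minimal $3$-folds of general type with canonical dimension $2$), for which one invokes known results of Chen and collaborators on $3$-folds fibered by curves over surfaces — this is presumably proved in the excerpt's Section on $3$-folds or cited. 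The main obstacle I anticipate is the \emph{non-general-type base} problem: when $Z=\overline{\Phi_{|K_Y|}(Y)}$ is not of general type (e.g. uniruled, or with $\kappa(Z)<\dim Z$), the naive inequality $M\equiv\sum F_i$ with many fibers fails, and one cannot extract a large coefficient $a$; handling this requires either a separate argument bounding $p_g(Z)$ directly (using that $Z$ sits in projective space as the canonical image) or a more delicate use of the Iitaka fibration of $Z$, and getting this watertight across all $n$ — while keeping the constant sharp for $n\le 5$ — will be the technical heart of the proof.
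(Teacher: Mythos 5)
There is a genuine gap, and the heart of it is quantitative. Your inductive scheme hinges on the chain $K_Y^n\geq(\pi^*K_Y|_{Y_1})^{n-1}\geq \Vol(Y_1)$, but the second inequality goes the wrong way: by adjunction $K_{Y_1}\sim (K_{Y'}+M_1)|_{Y_1}$, so $K_{Y_1}$ is roughly \emph{twice} $\pi^*K_Y|_{Y_1}$, and the correct comparison (via Kawamata-type extension/restriction of pluricanonical systems) is $\pi^*K_Y|_{Y_1}\geq \frac{1}{2}\pi_{Y_1}^*(K_{(Y_1)_0})$, i.e.\ $(\pi^*K_Y|_{Y_1})^{n-1}\geq 2^{-(n-1)}\Vol(Y_1)$. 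If you then iterate a dimension-by-dimension induction, these losses compound multiplicatively and you land far below the claimed constant $\frac{1}{(n-1)^2}\lceil\frac{8(n-2)}{3}\rceil$. The paper avoids exactly this by \emph{not} inducting: it cuts by $n-2$ general members $M_1,\dots,M_{n-2}$ of $\mathrm{Mov}|K_{Y'}|$ all at once down to a surface $S$, uses $K_{Y^i}\sim(K_{Y'}+iM)|_{Y^i}$ together with a restriction theorem for pluricanonical systems to get the one-shot inequality $\pi^*(K_Y)|_{Y^i}\geq\frac{1}{i+1}\pi_{Y^i}^*(K_{Y^i_0})$, hence $\pi^*(K_Y)|_S\geq\frac{1}{n-1}\pi_S^*(K_{S_0})$ — a linear, not exponential, loss. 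Your induction also assumes the slice again has canonical dimension one less than its dimension; this is unjustified, since $K_{Y_1}\geq 2M|_{Y_1}$ and the canonical map of $Y_1$ may well become generically finite, so the inductive hypothesis need not apply.

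The second problem is that what you call the technical heart — the case where the canonical image $Z$ is not of general type, and the analysis of the curve fibration over $Z$ — simply does not occur in the actual argument, and you leave it unresolved. The paper never studies $Z$ or the Stein factorization at all. Because $\mathrm{can.dim}(Y)=n-1$, after cutting down to $S$ the restricted system $|M|_S|$ is a free pencil of curves, $M|_S\equiv\gamma C$ with $\gamma\geq p_g(Y)-n+1\geq 1$, and everything is then pushed to two surface facts: $(\pi_S^*(K_{S_0})\cdot C)\geq 2$ (giving $K_Y^n\geq\frac{2(p_g(Y)-n+1)}{n-1}\geq\frac{2}{n-1}$, which is where your expected inequality ``$(\pi^*K_Y\cdot F)\geq 2$'' really lives), and the Noether-type surface inequality of Chen--Chen--Jiang applied to $S$ with $K_{S_0}\succeq(n-1)\gamma C$, which yields $\Vol(S)\geq\lceil\frac{8}{3}((n-1)(p_g(Y)-n+1)-1)\rceil$ and hence $K_Y^n\geq\frac{1}{(n-1)^2}\lceil\frac{8}{3}(n-2)\rceil$ — not a Noether inequality applied to $Z$ or to a hyperplane section, as you guessed. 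So your proposal both misidentifies where the difficulty lies and, as written, cannot reach the stated constants.
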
 

\begin{thm}\label{thm(n-2)} Let $Y$ be a minimal projective $n$-fold of general type with canonical dimension $n-2$ ($n\geq 3$). Then 
$$K_Y^n\geq \begin{cases}
\frac{1}{3}, &\textrm{for } n=3;\\
\frac{1}{(n-1)(n-2)}, &\textrm{for } 4\leq n\leq 11;\\
	\frac{4n-14}{3(n-2)^3}, &\textrm{for } n\geq 12.
	\end{cases}$$
\end{thm}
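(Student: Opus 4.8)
The plan is to run the standard induction machinery on the canonical map, now adapted to canonical dimension $n-2$, in the spirit of the proof of Theorem~\ref{thm(n-1)} and of the classical work on $n$-folds of general type with large canonical dimension. First I would pass to a smooth birational model and take a resolution of the canonical map, so that $|K_Y|$ (after removing the fixed part) gives a morphism $f\colon Y'\to W$ onto an $(n-2)$-dimensional variety with connected fibers; the general fiber $F$ is then a smooth surface. The key structural dichotomy is whether $F$ is of general type or not: if $\dim W=n-2$ then $F$ is $2$-dimensional, and one splits into the case where $F$ is a surface of general type and the case where it is not (rational, ruled, or of Kodaira dimension $0$ or $1$). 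In the first subcase one uses the lower bounds for volumes and for $p_g$ of minimal surfaces of general type (Noether's inequality $K_S^2\ge 2p_g(S)-4$ and $K_S^2\ge 1$), together with a slope-type inequality relating $\Vol(K_Y)$ to the volume along the fibers and the positivity of the pushforward $f_*\omega_{Y'/W}$ via the Fujita--Kawamata semipositivity theorem. In the second subcase $F$ does not move enough inside its canonical system, so one argues that the canonical image of $Y$ factors further and one is effectively reduced to a fibration over a curve or to canonical dimension $\le n-3$, which is controlled inductively.

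The heart of the matter is the numerical book-keeping that produces the three regimes $n=3$, $4\le n\le 11$, $n\ge 12$ in the stated bound. For $n=3$ this is essentially the known sharp value $K_Y^3\ge \frac13$ for threefolds with canonical dimension $1$, which I would cite or reprove using the canonical fibration over a curve, Xiao's slope inequality for the relative canonical algebra, and the explicit minimum volume $\frac{1}{14}$-type estimates but in the easier fibred-over-curve setting, ultimately reducing to the surface fiber inequalities. For $n\ge 4$ I would set up the induction so that cutting by a general member of $|K_Y|$ (or a general fiber of $f$) drops the dimension by one and the canonical dimension by one, applying Theorem~\ref{thm(n-1)} and the hypothetical lower-dimensional cases of Theorem~\ref{thm(n-2)} itself; the bound $\frac{1}{(n-1)(n-2)}$ is exactly what one gets from $n-2$ successive hyperplane-section steps each contributing a factor, terminating at a surface with $K_S^2\ge 1$, while the switch to $\frac{4n-14}{3(n-2)^3}$ for large $n$ reflects that for $n\ge 12$ the better input is the asymptotically linear-in-$n$ Noether-type bound $2p_g-2n$ for the top canonical-dimension pieces appearing in the induction, paralleling exactly the $\lceil 8(n-2)/3\rceil$ phenomenon in Theorem~\ref{thm(n-1)}.

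The main technical obstacle, as usual in this circle of ideas, is controlling the fixed part of $|K_Y|$ and the base locus of the canonical map: one cannot assume $|K_Y|$ is base point free, so the inequality $K_Y\cdot(\text{general fiber contribution})\ge \deg$ has to be made precise through a careful choice of a very ample auxiliary divisor $H$ on $W$ and the inequality $f^*H\preceq K_{Y'}$ modulo effective divisors, which is where the coefficients $\frac{2}{n-1}$ versus $\frac{1}{(n-1)(n-2)}$ genuinely diverge from the canonical-dimension-$(n-1)$ case (one loses a factor roughly $n-2$ because the movable part of $|K_Y|$ now only detects an $(n-2)$-dimensional image). A secondary difficulty is ensuring that in the non-general-type fiber subcase the reduction is genuinely to a strictly lower canonical dimension rather than circular; here I would invoke the easy addition formula for Kodaira/canonical dimension of fibre spaces and the fact that a non-general-type surface fibred canonically must have its canonical system pulled back from the base, closing the induction. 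Finally, optimality of the constants (claimed in the abstract up to dimension $5$) would be witnessed by weighted-hypersurface examples produced via Theorem~\ref{nefness}, so the lower-bound proof and the examples must be matched dimension by dimension.
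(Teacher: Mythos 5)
Your overall instinct (restrict along the canonical map, then feed in surface/threefold inequalities, with the Noether inequality taking over for large $n$) points in the right direction, but as written the proposal has two genuine problems. First, the ``key structural dichotomy'' on whether the general fiber $F$ of the canonical fibration is of general type is vacuous: since $Y$ is of general type, easy addition gives $n=\kappa(Y)\leq \kappa(F)+(n-2)$, so $\kappa(F)=2$ always, and the entire second branch of your argument (reduction to lower canonical dimension) addresses a case that cannot occur. Second, and more seriously, your quantitative scheme does not reach the stated constants. If you cut by $n-2$ general members of the movable part and terminate at a surface with $K_{S_0}^2\geq 1$, the restriction comparison only yields $K_Y^n\geq \frac{1}{(n-1)^2}K_{S_0}^2\geq\frac{1}{(n-1)^2}$, which is strictly weaker than $\frac{1}{(n-1)(n-2)}$; and the alternative induction you sketch (cut once, apply Theorem~\ref{v(n-1)} or lower-dimensional cases of the theorem itself) loses a Kawamata-type factor $\frac{1}{2}$ at every step, so the constants degrade exponentially in $n$ rather than polynomially. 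The paper's proof avoids both pitfalls by cutting only down to a $3$-fold $X=M_1\cap\dots\cap M_{n-3}$, establishing in one shot the comparison $\pi_Y^*(K_Y)|_X\geq \frac{1}{n-2}\pi_X^*(K_{X_0})$ via restriction of pluricanonical systems, and noting that $K_X\geq lF$ with $l\geq (p_g(Y)-n+2)(n-2)$ for the pencil $|F|$ cut out by $|M|$ on $X$.

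The missing ingredient in your sketch is precisely the dedicated $3$-fold statement that converts this pencil structure into a volume bound: Proposition~\ref{3folds} gives $\Vol(X)\geq\frac{l^2}{l+1}$ (with the exceptional case $l=2$, $F$ a $(1,1)$-surface handled by Lemma~\ref{1,1}), proved via Kawamata's extension theorem $\pi_X^*(K_{X_0})|_F\geq\frac{l}{l+1}\pi_F^*(K_{F_0})$ together with the inductive $\xi$-estimates of \cite{CZ08} for $(1,2)$- and $(1,1)$-surfaces. Your placeholder ``slope-type inequality plus Fujita--Kawamata semipositivity of $f_*\omega_{Y'/W}$'' is not developed and there is no indication it produces anything as strong as $\frac{l^2}{l+1}$; it is exactly this refinement that upgrades $\frac{1}{(n-1)^2}$ to $\frac{1}{(n-1)(n-2)}$. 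Likewise, for $n\geq 12$ the actual mechanism is the $3$-dimensional Noether inequality $\Vol(X)\geq\frac{4}{3}p_g(X)-\frac{10}{3}$ of \cite{Noether,Noether_Add} applied to the intermediate $3$-fold $X$, which is legitimate because $p_g(X)\geq (p_g(Y)-n+2)(n-2)+1\geq 11$; the bound $2p_g-2n$ you invoke is Kobayashi's inequality for canonical dimension equal to the dimension and is not what drives the $\frac{4n-14}{3(n-2)^3}$ regime. The $n=3$ case is indeed essentially \cite[Theorem 1.4]{Chen07}, as you say.
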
 

In fact, we prove slightly general inequalities in Theorem~\ref{v(n-1)} and Theorem~\ref{v(n-2)}. It is interesting to know whether the lower bounds in Theorem~\ref{thm(n-1)} and Theorem~\ref{thm(n-2)} are close to the optima. 

Thanks to the effective construction in the first part of this paper, we are able to find supporting examples which show that, at least, both Theorem~\ref{thm(n-1)} and Theorem~\ref{thm(n-2)} are optimal for $n\leq 5$. We also provide higher dimensional examples of which the canonical volumes are nearly optimal. 

The structure of this article is as follows. In Section~\ref{sec 2}, we collect basic notions and preliminary results. In Section~\ref{sec 3}, we prove the nefness criterion (i.e., Theorem~\ref{nefness}), which enables us to tell when a weighted blow-up of a well-formed quasismooth weighted hypersurface induces a minimal model with canonical singularities. Section~\ref{sec 4} devotes to presenting concrete examples of minimal $3$-folds with Kodaira dimension at least $2$. In Section~\ref{sec 5}, we mainly study the lower bound of canonical volumes of minimal varieties of general type with higher canonical dimensions. Thanks to our established construction, we manage to find many series of supporting examples in all dimensions, in Section~\ref{sec 6}, which justify the inequalities in Theorem~\ref{thm(n-1)} and Theorem~\ref{thm(n-2)}. {}Finally we put a list of $46$ families of concrete minimal $3$-folds of Kodaira dimension $2$ in Appendix~\ref{appendix}. 

\section{Preliminaries}\label{sec 2}
Throughout we work over any algebraically closed field of characteristic $0$.

\subsection{Canonical volume and canonical dimension}\

Let $Z$ be a smooth projective variety. The {\it canonical volume} of $Z$ is defined as
$$\Vol(Z)=\lim_{m\to \infty}\frac{\dim(Z)!\ h^0(Z,mK_Z)}{m^{\dim(Z)}}.$$
For an arbitrary normal projective variety $X$, the {\it geometric genus} of $X$ is defined as $p_g(X)=p_g(Z)=h^0(Z, K_Z)$, and the {\it canonical volume} of $X$ is defined as 
$\Vol(X) = \Vol(Z),$
where $Z$ is a smooth birational model of $X$. It is known that both $p_g(X)$ and $\Vol(X)$ are independent of the choice of $Z$. Moreover, if $X$ has at worst canonical singularities and $K_X$ is nef, then $\Vol(X)=K^{\dim(X)}_X.$ 

A normal projective variety $X$ is {\it of general type} if $\Vol(X)>0$. A projective variety $X$ is {\it minimal} if $X$ is normal $\mathbb{Q}$-factorial with at worst terminal singularities and $K_X$ is nef.

\begin{defn} For a normal projective variety $X$ with at worst canonical singularities, define the {\it canonical dimension} of $X$ as
$$\text{\rm can.dim}(X)=\begin{cases}
-\infty, & \text{if } p_g(X)=0;\\
0, & \text{if } p_g(X)=1;\\
\dim\overline{\Phi_{|K_X|}(X)}, &\textrm{otherwise}.\end{cases}$$
Here $\Phi_{|K_X|}$ is the rational map defined by the linear system $|K_X|$, and $\overline{\Phi_{|K_X|}(X)}$ is called the {\it canonical image} of $X$. 
\end{defn}

\subsection{Kodaira dimension and numerical Kodaira dimension}\

\begin{defn}
Let $D$ be a $\mathbb{Q}$-Cartier $\mathbb{Q}$-divisor on a normal projective variety $X$. The {\it Kodaira dimension} of $D$ is defined to be
{\small $$
\kappa(X, D)=\begin{cases}
\max\{k\in \mathbb{Z}_{\geq 0}\mid \varlimsup\limits_{m\to \infty}m^{-k}{h^0(X,\lfloor mD\rfloor )}>0\}, & \text{if\ } |\lfloor mD\rfloor|\neq\emptyset\\
&\text{for a\ }m\in \mathbb{Z}_{> 0}; \\
-\infty, & \text{otherwise}.
\end{cases}$$}
\end{defn}

For a normal projective variety $X$ such that $K_X$ is $\mathbb{Q}$-Cartier, denote $\kappa(X)=\kappa(X, K_X)$. Note that if $X$ has at worst canonical singularities, then $\kappa(X)=\dim X$ if and only if $X$ is of general type.

\begin{defn}
Let $D$ be a nef $\mathbb{Q}$-Cartier $\mathbb{Q}$-divisor on a projective variety $X$. The {\it numerical Kodaira dimension} of $D$ is defined to be 
$$
\nu(X, D):=\max\{k\in \mathbb{Z}_{\geq 0}\mid D^k\not\equiv 0\}.
$$
\end{defn}

For a normal projective variety $X$ such that $K_X$ is $\mathbb{Q}$-Cartier and nef, denote $\nu(X)=\kappa(X, K_X)$. 
The famous abundance conjecture states that, if $X$ is a normal projective variety with mild singularities (e.g., canonical singularities) such that $K_X$ is $\mathbb{Q}$-Cartier and nef, then $\kappa(X)=\nu(X)$. In particular, this conjecture was proved if $\dim X=3$ and $X$ has canonical singularities (see \cite{K5, Mi4, K6} and references therein).

\subsection{Cyclic quotient singularities}\label{sec quot sing}\

Let $r$ be a positive integer. Denote by ${\bm \mu}_r$ the cyclic group of $r$-th roots of unity in $\bC$. A {\em cyclic quotient singularity} is of the form $\mathbb{A}^{n}/{\bm \mu}_r$, where the action of ${\bm \mu}_r$ is given by 
\[
{\bm \mu}_r\ni \xi: (x_1, \dots, x_n)\mapsto (\xi^{a_1}x_1, \dots,\xi^{a_n} x_n)
\]
for certain $a_1, \dots, a_n\in \mathbb{Z}/r$. Note that we may always assume that the action of ${\bm \mu}_r$ on $\mathbb{A}^{n}$ is small, that is, it contains no reflection (\cite[Definition 7.4.6, Theorem 7.4.8]{Ishii}), which is equivalent to that $\gcd(r, a_1,...,\hat{a}_i,...,a_n)=1$ for every $1\leq i\leq n$ by \cite[Remark 1]{Fujiki}. In this case, we say that $\mathbb{A}^{n}/{\bm \mu}_r$ is of {\em type $\frac{1}{r}(a_1,\dots, a_n)$}. We say that $Q\in X$ is a {\em cyclic quotient singularity of type $\frac{1}{r}(a_1,\dots, a_n)$}
if $(Q\in X)$ is locally analytically isomorphic to a neighborhood of $(0\in \mathbb{A}^{n}/{\bm \mu}_r)$. Recall that this singularity is isolated if and only if $\gcd(a_i, r)=1$ for every $1\leq i\leq n$ by \cite[Remark 1]{Fujiki}. 

The toric geometry interpretation of cyclic quotient singularities, by virtue of Reid \cite[(4.3)]{Rei87}, is as follows. Let $\overline{M}\simeq \mathbb{Z}^{n}$ be the lattice of monomials on $\mathbb{A}^{n}$, and $\overline{N}$ its dual. Define $N=\overline{N}+\mathbb{Z}\cdot \frac{1}{r}(a_1, \dots, a_n)$ and $M\subset \overline{M}$ the dual sub-lattice. Let $\sigma=\mathbb{R}_{\geq 0}^{n}\subset N_\mathbb{R}$ be the positive quadrant and $\sigma^\vee \subset M_\mathbb{R}$ the dual quadrant. Then in the language of toric geometry, 
\[
\mathbb{A}^{n}=\text{Spec}~\bC[\overline{M}\cap \sigma^\vee]
\]
and its quotient
\[
\mathbb{A}^{n}/{\bm \mu}_r=\text{Spec}~\bC[{M}\cap \sigma^\vee]=T_N(\Delta),
\]
where $\Delta$ is the fan corresponding to $\sigma$.

We refer to \cite{Rei87} for the definitions of terminal singularities and canonical singularities. Here we only mention a criterion on whether a cyclic quotient singularity is terminal or canonical.

\begin{lem}[{\cite[4.11]{Rei87}}]\label{can lem}
A cyclic quotient singularity of type $\frac{1}{r}(a_1,\dots, a_n)$ is terminal (resp. canonical) if and only if 
$$
\sum_{i=1}^n\big\{\frac{ka_i}{r}\big\}> 1 \, (\text{resp. } \geq 1)
$$
 for $k=1,\dots, r-1$. Here $\big\{\frac{ka_i}{r}\big\}=\frac{ka_i}{r}-\rounddown{\frac{ka_i}{r}}$ for each $i$ and $k$. 
\end{lem}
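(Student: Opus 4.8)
The plan is to use the toric description of the singularity recalled above together with the combinatorial formula for discrepancies of toric divisors. Write $X=\mathbb{A}^n/{\bm\mu}_r=T_N(\Delta)$ with $\sigma=\mathbb{R}^n_{\geq 0}\subset N_{\mathbb R}$, where $N=\overline N+\mathbb{Z}\cdot\frac1r(a_1,\dots,a_n)$ and $\overline N=\mathbb{Z}^n$. First I would record that the primitive ray generators of $\sigma$ in $N$ are the standard basis vectors $e_1,\dots,e_n$; this is where the smallness assumption $\gcd(r,a_1,\dots,\hat a_i,\dots,a_n)=1$ enters, since it guarantees that no shorter lattice vector lies on the ray $\mathbb{R}_{\geq 0}e_i$. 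As $\sigma$ is simplicial, $X$ is $\mathbb{Q}$-Gorenstein and the class $-K_X=\sum_i D_i$ (sum over the torus-invariant prime divisors $D_i\leftrightarrow e_i$) is represented by the unique $\mathbb{R}$-linear function $\varphi$ on $N_{\mathbb R}$ with $\varphi(e_i)=1$, namely $\varphi(x_1,\dots,x_n)=\sum_i x_i$.

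Second, I would invoke the standard toric discrepancy computation: for a primitive lattice vector $w\in(N\cap\sigma)\setminus\{0\}$, subdividing $\sigma$ along the ray $\mathbb{R}_{\geq 0}w$ produces a divisor $E_w$ over $X$ with discrepancy $a(E_w,X)=\varphi(w)-1$, and, by toric resolution of singularities, every divisor over $X$ whose center lies in the singular locus arises this way (up to the $D_i$, which have discrepancy $0$). Consequently $X$ is terminal (resp.\ canonical) if and only if $\varphi(w)>1$ (resp.\ $\varphi(w)\geq 1$) for every primitive $w\in(N\cap\sigma)\setminus\{0,e_1,\dots,e_n\}$. This turns the problem into an elementary estimate for the linear function $\varphi$ on the lattice points of the cone, and is the technical heart of the argument.

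Third comes the reduction from all lattice points to the $r-1$ representatives appearing in the statement. Given $w\in N\cap\sigma$, its image in the cyclic group $N/\overline N$ equals $k\cdot\frac1r(a_1,\dots,a_n)$ modulo $\overline N$ for a unique $k\in\{0,1,\dots,r-1\}$; set $w_k=(\{\tfrac{ka_1}{r}\},\dots,\{\tfrac{ka_n}{r}\})$, the representative of this class with all coordinates in $[0,1)$. Then $w_k\in\sigma$, and $w-w_k\in\overline N=\mathbb{Z}^n$ has all coordinates $>-1$, hence $w-w_k\in\mathbb{Z}^n_{\geq 0}$. Therefore
$$\varphi(w)=\varphi(w_k)+\varphi(w-w_k),\qquad \varphi(w_k)=\sum_{i=1}^n\Big\{\frac{ka_i}{r}\Big\},\qquad \varphi(w-w_k)\in\mathbb{Z}_{\geq 0},$$
with $\varphi(w-w_k)=0$ exactly when $w=w_k$. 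Since for $k\neq 0$ the point $w_k$ is nonzero, non-integral, and thus distinct from every ray generator $e_i$, the minimum of $\varphi$ over the nonzero non-generator lattice points of each nontrivial coset is attained precisely at $w_k$, while on the trivial coset $k=0$ the nonzero lattice points are integral with $\varphi\geq 1$ (and $\varphi\geq 2$ off the $e_i$). Feeding this into the criterion of the previous paragraph, $X$ is canonical iff $\sum_i\{\frac{ka_i}{r}\}\geq 1$ for all $k=1,\dots,r-1$, and terminal iff each of these sums is $>1$, which is exactly the asserted statement.

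I expect the main obstacle to be the second step: justifying that discrepancies of toric valuations are computed by $\varphi(w)-1$ and, crucially, that it suffices to test lattice points of $\sigma$ rather than all divisors over $X$. This rests on toric resolution of singularities together with the fact that any exceptional divisor over a toric variety can be extracted on a toric (hence fan-subdividing) birational model, so that its discrepancy is read off from the primitive generator of the corresponding ray. The remaining ingredients—the coset bookkeeping in the third step and the primitivity input from smallness in the first—are routine once this discrepancy dictionary is in place.
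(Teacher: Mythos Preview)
Your argument is correct and is essentially the standard toric proof (the Reid--Shepherd-Barron--Tai criterion). Note, however, that the paper does not supply its own proof of this lemma: it is stated with a citation to \cite[4.11]{Rei87} and used as a black box, so there is no in-paper proof to compare against. Your write-up reproduces the classical argument that underlies Reid's statement, and the three steps---primitivity of the $e_i$ from smallness, the toric discrepancy formula $a(E_w,X)=\varphi(w)-1$, and the coset reduction to the representatives $w_k$---are all handled correctly.
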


Also it is well-known that a $3$-dimensional cyclic quotient singularity is terminal if and only if it is of type 
$\frac{1}{r}(1,-1, a)$ with $\gcd(a,r)=1$ (\cite[5.2]{Rei87}).

\subsection{The Reid basket}\

A {\it basket} $B$ is a collection of pairs of integers (permitting
weights), say $\{(b_i,r_i)\mid i=1, \dots, s;\ \gcd(b_i, r_i)=1\}$. For simplicity, we will alternatively write a basket as a set of pairs with weights, 
 say for example,
$$B=\{(1,2), (1,2), (2,5)\}=\{2\times (1,2), (2,5)\}.$$

Let $X$ be a $3$-fold with at worst canonical singularities. According to 
Reid \cite{Rei87}, there is a basket of terminal cyclic quotient singularities (called the {\it Reid basket})
$$B_X=\bigg\{(b_i,r_i)\mid i=1,\dots, s;\ 0<b_i\leq \frac{r_i}{2};\ \gcd(b_i,r_i)=1\bigg\}$$
associated to $X$, where a pair $(b_i,r_i)$ corresponds to a terminal cyclic quotient singularity of type $\frac{1}{r_i}(1,-1,b_i)$. The way of determining the Reid basket of $X$ is to take a terminalization (i.e. a crepant $\mathbb{Q}$-factorial terminal model) $X'\to X$ and to locally deform every terminal singularity of $X'$ into a finite set of terminal cyclic quotient singularities.

In this article we only need to compute the baskets for minimal projective $3$-folds with terminal cyclic quotient singularities, in this case the Reid basket coincides with the set of singular points, where a singular point of type $\frac{1}{r}(1,-1,a)$ is simply denoted as $(a,r)$ under no circumstance of confusion. 

\subsection{Weighted projective spaces and weighted hypersurfaces}\label{subsec wps}\

We refer to \cite{WPS, Fle00} for basic knowledge of weighted projective spaces and weighted hypersurfaces.
\begin{defn}[{\cite[5.11, 6.10]{Fle00}}]\label{wellform}
	\begin{enumerate}
		\item A weighted projective space $\mathbb{P}(a_0,...,a_n)$ is {\it well-formed} if $\gcd(a_0,...,\hat{a}_i,...,a_n)=1$ 
			for each $i$.
		\item A hypersurface $X_d$ in $\mathbb{P}(a_0,...,a_n)$ of degree $d$ is {\it well-formed} if
		$P(a_0,...,a_n)$ is well-formed and
		$\gcd(a_0,...,\hat{a}_i,...,\hat{a}_j,...,a_n)  | d$
		for all distinct $i,\ j$.
	\end{enumerate}
\end{defn}

\begin{defn}[{\cite[3.1.5]{WPS}, \cite[6.1]{Fle00}}]\label{qs}
	A hypersurface $X\subset \mathbb{P}(a_0,...,a_n)$ is {\em quasismooth} if the corresponding affine cone of $X$ in $\mathbb{A}^{n+1}$ is smooth outside the origin point.
\end{defn}
For the quasismoothness of a general weighted hypersurface, we have the following criterion.
\begin{thm}[{cf. \cite[Theorem 8.1]{Fle00}}]\label{2.7}
	Let $n$ be a positive integer. The general hypersurface $X_d\subset \mathbb{P}(a_0,...,a_n)$ of degree $d$ is quasismooth if and only if either of the following holds:
	\begin{enumerate}
		\item there exists a variable $x_i$ of weight 
		$d$ for some $i$ (that is, $X$ is a linear cone); 
		\item for every nonempty subset $I=\{i_0,...,i_{k-1}\}$ of 
		$\{0,...,n\}$, 
		either 
			\begin{enumerate}
		\item[(a)] there exists a monomial $x^M_I=x^{m_0}_{i_0}\dots x^{m_{k-1}}_{i_{k-1}}$ of 
		degree $d$, or
		 \item[(b)] for $\mu = 1,...,k$, there exists monomials
		$$x^{M_\mu}_Ix_{e_\mu}=x^{m_{0,\mu}}_{i_0}\dots x^{m_{k-1,\mu}}_{i_{k-1}} x_{e_\mu}$$
		of degree $d$, where $\{e_\mu\}$ are $k$ distinct elements in $\{0,...,n\}$. 
		Here $m_j, m_{j, \mu}$ may be $0$.
		\end{enumerate}
	\end{enumerate}
\end{thm}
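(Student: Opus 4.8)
The plan is to translate quasismoothness into a statement about the gradient of the defining equation on the affine cone, and then verify that statement one torus-stratum at a time. Write $f$ for the general weighted-homogeneous polynomial of degree $d$ cutting out $X_d$, and let $C_X=\{f=0\}\subset\mathbb{A}^{n+1}$ be its affine cone. Since $d>0$, Euler's relation $\sum_i a_i x_i\,\partial f/\partial x_i=d\,f$ shows that every common zero of all partials $\partial f/\partial x_i$ already lies on $C_X$; hence $\operatorname{Sing}(C_X)=\{\nabla f=0\}$ and \emph{$X_d$ is quasismooth if and only if $\nabla f$ has no zero on $\mathbb{A}^{n+1}\setminus\{0\}$}. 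I would then decompose $\mathbb{A}^{n+1}\setminus\{0\}$ into the coordinate torus strata $T_I=\{x_i\neq0\ (i\in I),\ x_j=0\ (j\notin I)\}$ indexed by nonempty $I\subseteq\{0,\dots,n\}$, and test on each $T_I$ separately. The combinatorial input is the following bookkeeping of restrictions: for $i\in I$ the partial $\partial f/\partial x_i|_{T_I}$ receives contributions only from the degree-$d$ monomials supported on $I$ (the $x_I^M$), while for $e\notin I$ the partial $\partial f/\partial x_e|_{T_I}$ receives contributions only from the degree-$d$ monomials of shape $x_I^M x_e$; this is exactly the dichotomy of (a) and (b).

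For sufficiency I would dispose of the cases in turn. If $X_d$ is a linear cone, say $a_i=d$, then $\partial f/\partial x_i$ contains the nonzero constant coefficient of $x_i$, so $\nabla f$ is nowhere zero and $X_d$ is quasismooth. Fix now a nonempty $I$ with $|I|=k$ and let $U\subset\mathbb{P}(a_i:i\in I)$ be the open torus orbit, of dimension $k-1$, which is the image of $T_I$. If (a) holds, the linear system $W_I$ of $I$-supported degree-$d$ forms is nonzero and, containing a monomial invertible on $U$, is base-point-free on $U$; Bertini's theorem makes the general member $\{g=0\}\cap U$ smooth, and, through Euler's relation again, this forces the inner gradient $(\partial f/\partial x_i)_{i\in I}$ to be nowhere zero on $T_I$, so a fortiori $\nabla f\neq0$ there.

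If instead (b) holds, I would use the $k$ distinct indices $e_1,\dots,e_k$ to produce $k$ forms $F_{e_\mu}=\partial f/\partial x_{e_\mu}|_{T_I}$, each a general member of a linear system that contains the monomial $x_I^{M_\mu}$ and is therefore base-point-free on the $(k-1)$-dimensional orbit $U$. Applying Bertini repeatedly, the common zero locus $\{F_{e_1}=\dots=F_{e_k}=0\}$ on $U$ has dimension at most $(k-1)-k<0$ and is empty; hence these $k$ partials already have no common zero on $T_I$, and $\nabla f\neq0$ on $T_I$. Since the strata $T_I$ cover $\mathbb{A}^{n+1}\setminus\{0\}$, quasismoothness follows once every $I$ satisfies (a) or (b).

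The necessity direction is where the real content lies, and the step I expect to be most delicate. Assume $X_d$ is not a linear cone and that some nonempty $I$ fails both (a) and (b); I would produce a singular point of $C_X$ off the origin. Failure of (a) means $W_I=0$, so $f$ restricts to $0$ on the coordinate subspace $\mathbb{A}^I=\{x_j=0:j\notin I\}$, i.e. $\mathbb{A}^I\subseteq C_X$, and moreover $\partial f/\partial x_i$ vanishes identically on $\mathbb{A}^I$ for every $i\in I$. Consequently the singular locus of $C_X$ inside $\mathbb{A}^I$ is the affine cone cut out by the outer forms $F_e=\partial f/\partial x_e|_{\mathbb{A}^I}$, $e\notin I$. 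Failure of (b) means the number $s$ of indices $e$ with $F_e\not\equiv0$ is at most $k-1$, and the hypothesis that $X_d$ is not a linear cone guarantees $a_e<d$, so each such $F_e$ has positive degree and cuts dimension by at most one. Therefore $\{F_e=0:e\notin I\}\subseteq\mathbb{A}^I\cong\mathbb{A}^k$ is a cone of dimension at least $k-s\geq1$, which contains points other than the origin; any such point is a singular point of $C_X$ away from $0$, so $X_d$ is not quasismooth. The main obstacle is precisely this bookkeeping: one must check that the point so produced (which may lie on a smaller coordinate face, not on the open stratum $T_I$) is genuinely a zero of the \emph{full} gradient, and that the two conditions are correctly matched — (a) controlling the inner derivatives through $W_I$, (b) controlling the outer derivatives through the distinct indices $e_\mu$ — with the ``not a linear cone'' hypothesis entering exactly to prevent a degree-$0$ form $F_e$ from collapsing the dimension estimate.
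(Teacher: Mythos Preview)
The paper does not supply its own proof of this statement; it is quoted verbatim from Iano--Fletcher \cite[Theorem~8.1]{Fle00} as a preliminary tool, so there is no in-paper argument to compare against. Your outline is essentially the standard proof found in that reference, and it is correct in substance.

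A few small points worth tightening. In the sufficiency argument for case~(b), you should observe that if some $e_\mu\in I$ then the monomial $x_I^{M_\mu}x_{e_\mu}$ is itself $I$-supported of degree~$d$, so (a) already holds; hence when (b) is genuinely needed one may assume all $e_\mu\notin I$, and then the forms $F_{e_\mu}$ depend on disjoint sets of coefficients of $f$ and are therefore independently general, which is what makes the iterated Bertini count go through. In the necessity direction, ``not a linear cone'' gives $a_e\neq d$ rather than $a_e<d$; but if $a_e>d$ then $F_e\equiv 0$ automatically, so the nonzero $F_e$ indeed have positive degree and your dimension estimate stands. Finally, the ``main obstacle'' you flag at the end is not a real difficulty: once (a) fails, $\partial f/\partial x_i$ vanishes \emph{identically} on the whole linear subspace $\mathbb{A}^I$ for every $i\in I$ (not merely on the open stratum $T_I$), and likewise $\partial f/\partial x_e|_{\mathbb{A}^I}=F_e$ for $e\notin I$; hence any common zero of the $F_e$ on $\mathbb{A}^I\setminus\{0\}$ is automatically a zero of the full gradient, regardless of which face it sits on.
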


For a well-formed and quasismooth $X_d\subset \mathbb{P}(a_0,...,a_n)$, $h^0(X, \mathcal{O}_X(m))$ can be computed by counting monomials of degree $m$ as in \cite[3.4.3]{WPS} or \cite[Lemma~7.1]{Fle00}. Also we will ofter use the fact that the self-intersection number $\mathcal{O}_X(1)^n=\frac{d}{\prod_{i=0}^{n}a_i}$.

\subsection{Singularities on weighted hypersurfaces}\

Singularities of a well-formed quasismooth weighted hypersurface can be determined easily by looking at its defining equations. We refer to \cite[Sections 9-10]{Fle00} for the general method. Here we illustrate the result on singularities of $3$-dimensional hypersurfaces.

\begin{prop}[{cf. \cite[Sections 9-10]{Fle00}}]\label{non iso can}
Let $X_d$ be a general well-formed quasismooth $3$-dimensional hypersurface in $\mathbb{P}(a_0,...,a_4)$ of degree $d$. Suppose that $\gcd(a_i, a_j, a_k)=1$ for any distinct $0\leq i,j,k\leq 4$. Then the singularities of $X_d$ only arise along the edges and vertices of $\mathbb{P}(a_0,...,a_4)$. Denote $P_0, \dots, P_4$ to be the vertices. Then the set of singularities of $X_d$ is determined as follows:
\begin{enumerate}
 \item For a vertex $P_i$,
 \begin{enumerate}[label=(1.\roman*)]
 \item if $a_i | d$, then $P_i\not \in X_d$;
 \item if $a_i\nmid d$, then there exists another index $j$ such that $a_i | d-a_j$, and $P_i\in X_d$ is a cyclic quotient singularity of type $\frac{1}{a_i}(a_k, a_l, a_m)$.
 \end{enumerate}
 \item For an edge $P_iP_{j}$ (that is, $\overline{P_iP_{j}}\setminus \{P_i, P_j\}$, where $\overline{P_iP_{j}}$ is the line passing through $P_i$ and $P_j$), denote $e=\gcd(a_i, a_j)$,
 \begin{enumerate}[label=(2.\roman*)]
 \item if $e | d$, then $P_iP_{j}\cap X$ consists of exactly $\lfloor \frac{ed}{a_ia_j}\rfloor$ points, each point is a cyclic quotient singularity of type $\frac{1}{e}(a_k, a_l, a_m)$;
\item if $e\nmid d$, then $P_iP_{j}\subset X$, and there exists another index $k$ such that $e | d-a_k$, in this case, $P_iP_{j}$ is analytically isomorphic to $\mathbb{C}^*\times \frac{1}{e}(a_l, a_m)$, and each point on $P_iP_{j}$ is a cyclic quotient singularity of type $\frac{1}{e}(0, a_l, a_m)$. 
 \end{enumerate}
\end{enumerate}
Here $\{i,j,k,l,m\}$ is a reordering of $\{0,1,2,3,4\}$.
\end{prop}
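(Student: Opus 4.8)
The plan is to combine the orbifold description of $\mathbb{P}(a_0,\dots,a_4)$ with the quasismoothness criterion (Theorem~\ref{2.7}), reducing everything to a local monomial analysis at the torus-fixed strata. First I would record the general principle: writing $\mathbb{P}=\mathbb{P}(a_0,\dots,a_4)=(\mathbb{A}^5\setminus\{0\})/\bC^*$, the orbit of a point whose nonvanishing coordinates are indexed by $J\subseteq\{0,\dots,4\}$ has stabilizer ${\bm\mu}_{e_J}$ with $e_J=\gcd(a_j:j\in J)$. The hypothesis $\gcd(a_i,a_j,a_k)=1$ for distinct $i,j,k$ forces $e_J=1$ whenever $|J|\geq 3$, so $\mathrm{Sing}(\mathbb{P})$ is contained in the union of the edges ($|J|=2$) and vertices ($|J|=1$). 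Since $X=X_d$ is quasismooth, its affine cone is smooth away from the origin, so $X$ is a suborbifold of $\mathbb{P}$ and $\mathrm{Sing}(X)=X\cap\mathrm{Sing}(\mathbb{P})$; thus all singularities of $X$ lie on edges and vertices, and at each such point the local model of $X$ is the quotient of a smooth transverse slice by the relevant stabilizer, obtained via an equivariant implicit function theorem using smoothness of the cone. This is the skeleton into which the three cases fit.

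For the vertices $P_i$ I would apply Theorem~\ref{2.7} with $I=\{i\}$. If $a_i\mid d$ then the monomial $x_i^{d/a_i}$ occurs, and for general $X$ its coefficient is nonzero, so $f(P_i)\neq 0$ and $P_i\notin X$, giving (1.i). If $a_i\nmid d$ there is no such pure power, so $P_i\in X$; quasismoothness then forces (case (2)(b) of Theorem~\ref{2.7}) a monomial $x_i^m x_j$ of degree $d$ for some $j\neq i$, whence $a_i\mid d-a_j$. Setting $x_i=1$, this monomial contributes a nonzero linear term in $x_j$ to $f$, so $\partial f/\partial x_j(P_i)\neq 0$ for general $X$, and the equivariant implicit function theorem lets me eliminate $x_j$. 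The remaining three coordinates $x_k,x_l,x_m$ are local orbifold coordinates carrying the ${\bm\mu}_{a_i}$-weights $a_k,a_l,a_m$, which yields the type $\tfrac{1}{a_i}(a_k,a_l,a_m)$ of (1.ii); well-formedness together with the triple-gcd hypothesis guarantees that this presentation is small and the singularity isolated.

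For an edge $\overline{P_iP_j}$ with $e=\gcd(a_i,a_j)$ I would restrict $f$ to the line $x_\nu=0$ for $\nu\neq i,j$. A monomial $x_i^a x_j^b$ has degree $aa_i+ba_j\equiv 0\pmod e$, so it can equal $d$ only if $e\mid d$. If $e\mid d$ (case (2.i)) the restriction is a nonzero weighted form on $\mathbb{P}(a_i,a_j)$, cutting out finitely many points on the open edge; for general $f$ these are reduced, miss the two vertices, and $X$ meets the edge transversally there, so the transverse slice gives the type $\tfrac{1}{e}(a_k,a_l,a_m)$, while the number of points equals the monomial count $\lfloor ed/(a_ia_j)\rfloor$. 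If $e\nmid d$ (case (2.ii)) the restriction vanishes identically, so $\overline{P_iP_j}\subset X$; quasismoothness applied with $I=\{i,j\}$ (case (2)(b)) supplies a monomial $x_i^a x_j^b x_k$ of degree $d$, giving $e\mid d-a_k$, and eliminating $x_k$ along the edge shows $X$ is analytically $\bC^*\times(\mathbb{A}^2/{\bm\mu}_e)$ with transverse weights $a_l,a_m$, i.e.\ every point is of type $\tfrac{1}{e}(0,a_l,a_m)$.

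I expect the main obstacle to be two intertwined points of rigor rather than the case division itself. First, the count $\lfloor ed/(a_ia_j)\rfloor$ in (2.i): this needs a careful enumeration of degree-$d$ monomials in $x_i,x_j$, together with an argument (using well-formedness, so that $\mathbb{P}(a_i,a_j)\cong\mathbb{P}^1$ after dividing out $e$) that a general such form has exactly that many reduced zeros on the open edge and none at the vertices. Second, upgrading the pointwise stabilizer analysis to a genuine local \emph{analytic} normal form in each case: one must invoke smoothness of the affine cone and an equivariant implicit function theorem to eliminate the distinguished variable and realize the transverse slice as $\mathbb{A}^3/{\bm\mu}_{a_i}$ (resp.\ $\mathbb{A}^2/{\bm\mu}_e$), and simultaneously verify, via the triple-gcd and well-formedness hypotheses, that the resulting action is small so the listed type is well defined. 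Throughout, the generality of $X$ (a Bertini-type argument) is what guarantees nonvanishing of all the coefficients used in these eliminations.
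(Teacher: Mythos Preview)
The paper does not supply its own proof of this proposition: it is stated with the attribution ``cf.\ \cite[Sections 9--10]{Fle00}'' and then used as a black box, so there is nothing in the paper to compare your argument against beyond that citation. Your proposal correctly reconstructs the standard argument from Iano-Fletcher---stabilizer analysis on the toric strata of $\mathbb{P}(a_0,\dots,a_4)$, quasismoothness via Theorem~\ref{2.7} to locate the tangent monomials, and the equivariant implicit function theorem to extract the transverse quotient type---and the two points you flag (the monomial count $\lfloor ed/(a_ia_j)\rfloor$ on the edge and the passage to an honest analytic normal form) are exactly the places where care is needed; both are handled in the cited reference along the lines you indicate.
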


\subsection{Weighted blow-ups of cyclic quotient singularities}\

Weighted blow-ups of cyclic quotient singularities play important roles in our construction of new examples. As cyclic quotient singularities are toric, certain blow-ups can be constructed and computed easily using toric geometry. We recall the following proposition from \cite{And18}.
\begin{prop}[{cf. \cite{And18}}]\label{wb}
	Let $X$ be a normal variety of dimension $n$ such that $K_X$ is $\mathbb{Q}$-Cartier.
Suppose that $X$ has a cyclic quotient singularity $Q$ of type $\frac{1}{r}(a_1,a_2,\dots, a_n)$, where $a_1>0$, $a_2>0$,$\cdots$, $a_n>0$, $\gcd(a_1,a_2,\dots, a_n)=1$. Then we can take a weighted blow-up $\pi:Y\to X$, at $Q$ with weight $(a_1,a_2,\dots, a_n)$, which has the following properties:
\begin{enumerate}
 \item The exceptional divisor $\pi^{-1}(Q)=E\cong \mathbb{P}(a_1,a_2,\dots, a_n)$.
 \item $\OO_Y(E)|_E\cong \OO_{\mathbb{P}(a_1,a_2,\dots, a_n)}(-r)$.
 \item Locally over $Q$, $Y$ is covered by $n$ affine pieces of cyclic quotient singularities of types $\frac{1}{a_i}(-a_1,\dots, r,\dots, -a_n)$, which is obtained by replacing the $i$-th term of $(-a_1,\dots, -a_n)$ with $r$ for each $i$. 
 \item $K_Y=\pi^*(K_X)-\frac{r-\sum_{i=1}^n a_i}{r}E$.
\end{enumerate}
In particular, if $X$ is projective and $\mathbb{P}(a_1,a_2,\dots, a_n)$ is well-formed, then
$$
K_Y^n=\Big(\pi^*(K_X)-\frac{r-\sum_{i=1}^n a_i}{r}E\Big)^n=K_X^n-\frac{(r-\sum_{i=1}^n a_i)^n}{r\prod_{i=1}^na_i}.
$$
\end{prop}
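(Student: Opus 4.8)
The plan is to reduce everything to a toric computation, since the cyclic quotient singularity $Q\in X$ is analytically toric and a weighted blow-up is a toric birational modification. I would work with the lattice data fixed in Subsection~\ref{sec quot sing}: let $\overline{N}=\mathbb{Z}^n$ with dual $\overline{M}$, set $v=\frac{1}{r}(a_1,\dots,a_n)$, $N=\overline{N}+\mathbb{Z}\cdot v$, and $\sigma=\mathbb{R}_{\geq 0}^n$, so that $\mathbb{A}^n/{\bm \mu}_r=T_N(\Delta)$ with $\Delta$ the fan of faces of $\sigma$. First I would check that $v$ is primitive in $N$ (a short computation using $\gcd(a_1,\dots,a_n)=1$), and that, under the smallness assumption $\gcd(r,a_1,\dots,\widehat{a_i},\dots,a_n)=1$ recorded in Subsection~\ref{sec quot sing}, each standard generator $e_i$ remains primitive in $N$. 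The morphism $\pi\colon Y\to X$ is then defined locally as the toric morphism induced by the star subdivision $\Sigma^{*}$ of $\sigma$ at the ray $\mathbb{R}_{\geq 0}v$; this subdivision has maximal cones $\sigma_i=\langle e_1,\dots,\widehat{e_i},\dots,e_n,v\rangle$ for $i=1,\dots,n$, and the new ray $\mathbb{R}_{\geq 0}v$ gives the exceptional divisor $E=D_v$.

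For (1), I would identify $E$ with the orbit closure $V(\mathbb{R}_{\geq 0}v)$, whose fan is the image of $\mathrm{Star}(\mathbb{R}_{\geq 0}v)$ in the quotient lattice $N/\mathbb{Z}v$. The images of the cones $\sigma_i$ generate precisely the fan of $\mathbb{P}(a_1,\dots,a_n)$, the weights $a_i$ appearing as the coefficients in $v=\frac{1}{r}\sum_i a_ie_i$; this yields $E\cong\mathbb{P}(a_1,\dots,a_n)$. For (3), on each chart $U_{\sigma_i}=\mathrm{Spec}\,\mathbb{C}[M\cap\sigma_i^\vee]$ the singularity type is read off from the finite group $N/L_i$, where $L_i$ is spanned by the primitive generators $\{e_j\}_{j\neq i}$ and $v$ of $\sigma_i$. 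A determinant computation gives $[N:L_i]=a_i$, and the relation $a_ie_i=rv-\sum_{j\neq i}a_je_j\in L_i$ shows $N/L_i$ is cyclic of order $a_i$ generated by $e_i$; expressing $e_i$ in the basis $(e_1,\dots,v,\dots,e_n)$ of $L_i$ and reducing modulo $L_i$ produces exactly the type $\frac{1}{a_i}(-a_1,\dots,r,\dots,-a_n)$.

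For (2) and (4) I would use support functions. The divisor $E=D_v$ corresponds to the piecewise-linear function $\psi_E$ with $\psi_E(v)=-1$ and $\psi_E(e_j)=0$; restricting $\psi_E$ to $\mathrm{Star}(\mathbb{R}_{\geq 0}v)$ and descending it to $N/\mathbb{Z}v$ identifies $\OO_Y(E)|_E$ with $\OO_{\mathbb{P}(a_1,\dots,a_n)}(-r)$, the value $-r$ being forced by primitivity of $v$ (one checks this against the ordinary blow-up, giving $\OO(-1)$, and the $A_1$-singularity, giving a $(-2)$-curve). For (4), since $K_X$ is $\mathbb{Q}$-Cartier the canonical support function is linear on $\sigma$ with value $1$ on each primitive generator $e_i$, hence equal to $\sum_i e_i^{\ast}$; evaluating at $v=\frac{1}{r}\sum_i a_ie_i$ gives discrepancy $\sum_i\frac{a_i}{r}-1=-\frac{r-\sum_i a_i}{r}$, which is property (4).

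Finally, for the self-intersection formula I would expand
$$K_Y^n=\Big(\pi^{\ast}K_X-\tfrac{r-\sum_i a_i}{r}E\Big)^n=\sum_{j=0}^n\binom{n}{j}(\pi^{\ast}K_X)^{n-j}\Big(-\tfrac{r-\sum_i a_i}{r}\Big)^jE^j.$$
The $j=0$ term is $K_X^n$ by the projection formula. Since $\pi(E)=Q$ is a point, $\pi^{\ast}K_X|_E$ is numerically trivial, so every mixed term with $0<j<n$ vanishes. The $j=n$ term uses $E^n=\big(\OO_Y(E)|_E\big)^{n-1}=(-r)^{n-1}\cdot\OO_{\mathbb{P}(a_1,\dots,a_n)}(1)^{n-1}=\frac{(-r)^{n-1}}{\prod_i a_i}$ (here well-formedness of $\mathbb{P}(a_1,\dots,a_n)$ enters through $\OO(1)^{n-1}=\frac{1}{\prod_i a_i}$), and a short simplification yields $K_Y^n=K_X^n-\frac{(r-\sum_i a_i)^n}{r\prod_i a_i}$. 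I expect the main obstacle to be the careful normalization in (2), namely the correct descent of the support function to the quotient lattice $N/\mathbb{Z}v$ and the resulting identification of the normal bundle $\OO_Y(E)|_E$ with $\OO(-r)$, together with keeping track of the signs of the weights in (3); the remaining steps are routine toric bookkeeping, largely available in \cite{And18}.
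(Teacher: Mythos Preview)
Your proposal is correct and follows the same approach as the paper: properties (1)--(4) are obtained via the standard toric description of the weighted blow-up (the paper simply cites \cite[Section~2]{And18} for this, whereas you sketch the star-subdivision and support-function computations explicitly), and the self-intersection formula is deduced exactly as in the paper by expanding the binomial, noting that mixed terms vanish since $E$ is contracted to a point, and computing $E^n=(E|_E)^{n-1}=\frac{(-r)^{n-1}}{\prod_i a_i}$ from (2).
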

\begin{proof} 
 For Properties (1)$\sim$(4), we refer to \cite[Section 2]{And18}. In fact, everything can be treated locally using toric geometry. The last statement is simply a direct consequence. In fact, as $E$ is contracted to a point, any intersections involving both $\pi^*(K_X)$ and $E$ are zero. On the other hand, $E^n=(E|_E)^{n-1}=\frac{(-r)^{n-1}}{\prod_{i=1}^na_i}$ by (2).
\end{proof}

\section{The nefness criterion}\label{sec 3}
 
 In this section, we start by looking into a well-formed quasismooth weighted hypersurface $X$ with isolated singularities, among which only one singular point, say $Q$, is non-canonical. If we take a partial resolution, by means of a weighted blow-up at $Q\in X$, to get the birational morphism $Y \to X$, $Y$ would have milder singularities than $X$ does. A very natural question is whether $K_Y$ is nef. If so, this will give us an easy but new strategy for constructing minimal varieties. A key observation of this section is Theorem~\ref{nefness} (which we call the ``nefness criterion''). As applications of Theorem~\ref{nefness}, in the next section, we 
 construct 59 families of new minimal $3$-folds of general type, in Table~\ref{tableA}, Table~\ref{tableAp}, Table~\ref{tableC}, and Table~\ref{tableC+}, followed by infinite families of minimal $3$-folds of Kodaira dimension $2$ (see Tables~\ref{tab kod 2 6r}$\sim$\ref{tab kod 2 4r+6}). All these minimal $3$-folds can naturally evolve into higher dimensional minimal varieties (see Section~\ref{sec 6}). 



\begin{proof}[Proof of Theorem~\ref{nefness}]
Recall that $X$ has a cyclic quotient singularity of type $\frac{1}{r}(e_1,\dots, e_n)$ at $Q$ and $\pi: Y\to X$ be the weighted blow-up at $Q$ with weight $(e_1,\dots, e_n)$ as in Proposition~\ref{wb}.

Without loss of generality, after rearranging of indices, we may assume that the assumptions hold for $ k=n$.
For each $j=1,\dots, n-1$, let $H_j\subset X$ be the effective Weil divisor defined by $x_j=0$, denote $L$ to be a Weil divisor corresponding to $\mathcal{O}_X(1)$. Then $$H_j\sim b_j L\ \text{and}\ K_X\sim \alpha L.$$
Denote $H'_j$ to be the strict transform of $H_j$ on $Y$ and $E$ to be the exceptional divisor of $\pi$. Then 
$$\pi^*H_j=H'_j+\frac{e_j}{r}E.$$ 
Denote $t_j=\frac{\alpha e_j-b_j(r-\sum_{i=1}^ne_i)}{b_jr}$. Then $t_j\geq 0$ for each $j=1,\dots, n-1$ by assumption.
As $K_Y=\pi^*K_X-\frac{r-\sum_{i=1}^ne_i}{r}E$, 
we can see that 
\begin{align}\label{K=H+E}
K_Y\sim_\mathbb{Q} \frac{\alpha}{b_j}H'_j+t_j E
\end{align}
for each $j=1,\dots, n-1$.

Assume, to the contrary, that $K_Y$ is not nef. Then there exists a curve $C$ on $Y$ such that $(K_Y\cdot C)<0$.
Note that $K_Y|_E=\frac{r-\sum_{i=1}^ne_i}{r}(-E)|_E$ is ample, hence $C\not \subset E$. Therefore Equation \eqref{K=H+E} implies that $C\subset \cap_{j=1}^{n-1} H'_j$. 

We claim that $\text{Supp}(\cap_{j=1}^{n-1} H'_j)=C$. It suffices to show that $\text{Supp}(\cap_{j=1}^{n-1} H'_j)$ is an irreducible curve.
Note that $\pi (\text{Supp}(\cap_{j=1}^{n-1} H'_j))=\cap_{j=1}^{n-1} H_j$ is a general hypersurface of degree $d$ in $\mathbb{P}(b_n, b_{n+1}, b_{n+2}),$ hence $\pi (\text{Supp}(\cap_{j=1}^{n-1} H'_j))$ is an irreducible curve by assumption. On the other hand, the support of $\cap_{j=1}^{n-1} H'_j\cap E$ is just the point $[0:\dots : 0: 1]$ in $E\simeq \mathbb{P}(e_1,\dots, e_n)$. So $\text{Supp}(\cap_{j=1}^{n-1} H'_j)$ is just the strict transform of $\pi (\text{Supp}(\cap_{j=1}^{n-1} H'_j))$, which is an irreducible curve.

Therefore, we can write $(H'_1\cdot \dots \cdot H'_{n-1})=t C$ for some $t>0$ as $1$-cycles. Then $(K_Y\cdot C)<0$ implies that 
$(K_Y\cdot H'_1\cdot \dots \cdot H'_{n-1} )<0$. On the other hand, 
\begin{align*}{}&(K_Y\cdot H'_1\cdot \dots \cdot H'_{n-1} )\\={}&((\pi^*K_X-\frac{r-\sum_{i=1}^ne_i}{r}E)\cdot (\pi^*H_1- \frac{e_1}{r}E)\cdot \dots \cdot (\pi^*H_{n-1}- \frac{e_{n-1}}{r}E))\\
={}&\alpha (\prod_{j=1}^{n-1}b_j ) L^{n}+(-1)^n\frac{(r-\sum_{i=1}^ne_i)\prod_{j=1}^{n-1}e_j}{r^n}E^n\\
={}&\frac{\alpha d}{b_nb_{n+1}b_{n+2}}-\frac{r-\sum_{i=1}^ne_i}{re_n}\geq 0,
\end{align*}
a contradiction.

Hence we conclude that $K_Y$ is nef. The fact that $\nu(Y)\geq n-1$ follows from $(K_Y^{n-1}\cdot E)>0$ as $K_Y|_E$ is ample.
\end{proof}

\begin{rem}\label{nefrk}
	Here we mention a special but important case of Theorem~\ref{nefness}. If $\alpha=r-\sum_{i=1}^ne_i$, and there exists an index $k\in \{1,\dots,n\}$ such that 
 $b_j=e_j$ for each $j\in \{1, \dots, n\}\setminus \{k\}$, then condition (1) in Theorem~\ref{nefness} automatically holds and, meanwhile, condition (2) is equivalent to $K_Y^n\geq 0$ as $K_Y^n = \frac{\alpha^n d}{\prod_{j=1}^{n+2}b_j}-\frac{(r-\sum_{i=1}^n e_i)^n}{r\prod_{i=1}^ne_i}$ by Proposition~\ref{wb}.
\end{rem}

As the last part of this section, we provide several lemmas which are helpful for applying Theorem~\ref{nefness} and for computing the invariants of resulting minimal models.
	
	For verifying condition (3) of Theorem~\ref{nefness}, we need to check the irreducibility of a general curve in a weighted projective plane, which can be done using the following lemma.
	
\begin{lem}\label{irreducible}
		Let $C$ be a general hypersurface of degree $d$ in the well-formed space $\mathbb{P}(a,b,c)$. Suppose that, in the weighted polynomial ring $\bC[x,y,z]$ with $\text{\rm weight}\, x=a$, $\text{\rm weight}\, y=b$ and $\text{\rm weight}\, z=c$, 
		\begin{enumerate}
		\item there are at least two monomials of degree $d$;
		\item all monomials of degree $d$ have no common divisor;
		\item the set of monomials of degree $d$ cannot be written as $\{g_1^i g_2^{k-i}\mid i=0,1,\dots, k\}$ for some integer $k>1$, where $k$ divides $d$ and $g_1$, $g_2$ are two monomials of degree $\frac{d}{k}$.
		\end{enumerate}
		Then $C$ is irreducible. 
	\end{lem}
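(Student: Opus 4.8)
\textbf{Proof proposal for Lemma~\ref{irreducible}.}

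The plan is to argue by contradiction: suppose a general $C=(f=0)$ of degree $d$ in $\mathbb{P}(a,b,c)$ is reducible, say $f=gh$ with $g,h$ homogeneous of positive degrees $d_1,d_2$ with $d_1+d_2=d$. First I would reduce to a statement about the monomials appearing. Since the coefficients of $f$ vary freely over the linear system and "general" means the coefficient vector lies outside a proper closed subset, if every general member factored then the factorization locus would be dense, hence all of the parameter space; so one may set up the incidence variety $\{(g,h)\mid gh \text{ has degree } d\}$ mapping to the space of degree-$d$ polynomials, and reducibility of the general member forces this map to be dominant, so $\dim\{(g,h)\} \geq \dim |{\mathcal{O}}(d)|$, i.e.\ the number of monomials of degree $d$ is at most (number of degree-$d_1$ monomials) $+$ (number of degree-$d_2$ monomials) $-1$. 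More usefully, dominance forces a much stronger structural conclusion, which is the heart of the argument: if the general degree-$d$ form is a product $g\cdot h$, then \emph{every} monomial of degree $d$ must be expressible as (a monomial of degree $d_1$)(a monomial of degree $d_2$), because the product of the generic degree-$d_1$ and degree-$d_2$ forms supports exactly the Minkowski sum of the two monomial supports, and this product form, being general in the factorization locus, must have the same support as the general degree-$d$ form (whose support is all monomials of degree $d$). So: the monomial semigroup in degree $d$ decomposes as a sumset $S_d = S_{d_1}+S_{d_2}$ where $S_{d_i}$ is the set of exponent vectors of monomials of degree $d_i$.

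Next I would exploit conditions (1)--(3) against this sumset decomposition. Condition (2) (no common divisor among degree-$d$ monomials) rules out the trivial decomposition in which one factor is forced to be a single monomial: if, say, $S_{d_1}$ were a single point $\{m_0\}$, then every degree-$d$ monomial would be divisible by the monomial $x^{m_0}$, contradicting (2); so both $S_{d_1}$ and $S_{d_2}$ contain at least two points. Now I would look at the structure more carefully. Because $\mathbb{P}(a,b,c)$ has only three variables, the degree-$d$ monomials $x^i y^j z^k$ with $ai+bj+ck=d$ lie on a single line segment in the plane $\{ai+bj+ck=d\}\subset \mathbb{R}^3_{\geq 0}$; concretely, the exponent vectors of degree-$d$ monomials are collinear (they all lie on the intersection of this affine plane with the positive octant, and being lattice points in $\mathbb{N}^3$ on that plane they are constrained to a one-parameter family). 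The key elementary fact is then: a finite set of collinear integer points that decomposes as a Minkowski sum $A+B$ with $|A|,|B|\geq 2$ is, after the obvious affine normalization, an arithmetic-progression-like set, and the decomposition forces $S_d$ to be of the form $\{g_1^i g_2^{k-i}\}$ for suitable monomials $g_1,g_2$ of degree $d/k$ and some $k>1$ dividing $d$ --- precisely the situation excluded by condition~(3). (Condition (1), that there are at least two monomials of degree $d$, is needed just to make the whole discussion non-vacuous --- with one monomial $C$ is a union of coordinate hyperplanes and the statement is false, consistent with (1) failing.)

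I would organize the collinearity step cleanly as follows: parametrize the degree-$d$ monomials, write them as $g_1^{i}g_2^{k}\cdots$-type expressions by first extracting the gcd-free generator, reduce the semigroup question to a one-dimensional sumset problem in $\mathbb{Z}$, and invoke (the easy direction of) the characterization of sets $S\subset \mathbb{Z}$ with $S=A+B$, $|A|,|B|\geq 2$: pairing this with condition (3) yields the contradiction. The main obstacle I anticipate is not any single hard theorem but the bookkeeping in the reduction from "general member reducible" to "monomial support is a Minkowski sum" --- making precise that genericity transfers the full monomial support to the product, and handling the case where $g$ or $h$ is not itself general (it only needs to be general \emph{subject to being a factor}), together with the normalization needed to pass from monomials/exponent vectors to plain integers so that condition (3)'s hypothesis ($k\mid d$, $g_1,g_2$ of degree $d/k$) matches exactly. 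Everything after that is elementary combinatorics of one-dimensional sumsets.
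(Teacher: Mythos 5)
Your reduction to ``the multiplication map $\mathbb{P}(H^0(\OO(d_1)))\times\mathbb{P}(H^0(\OO(d_2)))\to\mathbb{P}(H^0(\OO(d)))$ is dominant for some $d_1+d_2=d$'' is fine, but the two steps you build on it both fail. First, the collinearity claim is simply false: the exponent vectors of the degree-$d$ monomials are the lattice points of a two-dimensional polygon (the triangle $\{ai+bj+ck=d\}\cap\mathbb{R}^3_{\geq 0}$), not of a segment. Already for $\mathbb{P}(1,1,1)$, $d=2$, the six exponent vectors $(2,0,0),(0,2,0),(0,0,2),(1,1,0),(1,0,1),(0,1,1)$ are not collinear, and the same non-collinearity occurs in the weighted planes actually used in the paper (e.g.\ degree $40$ in $\mathbb{P}(1,8,20)$). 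So the one-dimensional sumset machinery you invoke has nothing to act on. Second, and more fundamentally, passing from dominance to the support relation $S_d\subseteq S_{d_1}+S_{d_2}$ discards exactly the information you need: that relation is essentially vacuous. For plane conics ($a=b=c=1$, $d=2$, $d_1=d_2=1$) one has $S_2=S_1+S_1$ and conditions (1)--(3) all hold, yet the general conic is irreducible; so no argument based only on the Minkowski-sum structure of the monomial supports can produce the contradiction with (3). The dimension inequality $h^0(\OO(d))\leq h^0(\OO(d_1))+h^0(\OO(d_2))-1$ that you wrote down and then abandoned is the kind of consequence of dominance that actually carries content, but you never connect it to (1)--(3).

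For comparison, the paper's proof avoids factorizations altogether: conditions (1) and (2) say that $|\OO(d)|$ is a linear system of positive dimension with no fixed component, so by the classical Bertini-type dichotomy a reducible general member forces $|\OO(d)|$ to be composed with a pencil; since the class group of $\mathbb{P}(a,b,c)$ is generated by $\OO(1)$, that pencil is $|\OO(d')|$ with $d=kd'$, $k>1$, and $H^0(\OO(d'))$ is spanned by two monomials $g_1,g_2$, whence the monomials of degree $d$ are exactly $\{g_1^ig_2^{k-i}\}$, contradicting (3). If you want to keep your factorization setup, you would have to analyze when the multiplication map can be dominant (not merely what it does to supports), which in effect reproduces the composed-with-a-pencil analysis.
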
 
	
	\begin{proof}
	Note that $C$ is a general member of the linear system $|\OO(d)|$.	By the first two conditions, $|\OO(d)|$ has no base component. Suppose, to the contrary, that $C$ is reducible. Then $h^0(\OO(d))\geq 3$ and $|\OO(d)|$ is composed with a pencil. Suppose that $|\OO(d')|$ is the corresponding irreducible pencil for some positive integer $d'$, then $d=d'k$ for some integer $k>1$. Moreover, $H^0(\mathbb{P}(a,b,c), \OO(d'))$ is spanned by two monomials $g_1, g_2$ of degree $d'$, which implies that $H^0(\mathbb{P}(a,b,c), \OO(d))$ is spanned by $\{g_1^i g_2^{k-i}\mid i=0,1,\dots, k\}$, a contradiction.
	\end{proof}
	
	\begin{rem}\label{irreducible remark} It can be checked that condition (3) of Lemma~\ref{irreducible} holds in each of the following cases:
	\begin{enumerate}[label=(3.\roman*)]
	\item There exist 3 monomials of degree $d$ of forms $x^{m_1}y^{n_1}$, $y^{m_2}z^{n_2}$, $x^{m_3}z^{n_3}$, where $m_i, n_i$ are positive integers for $i=1,2,3$.
	\item There exist 2 monomials of degree $d$ of forms $x^{m_1}$, $y^{m_2}z^{m_3}$, such that $\gcd(m_1, m_2, m_3)=1$, where $m_i$ is a non-negative integer for $i=1,2,3$.
	\end{enumerate}
Assume to the contrary that the set of monomials of degree $d$ can be written as $\{g_1^i g_2^{k-i}\mid i=0,1,\dots, k\}$. 
	For Case~(3.i), as we have $3$ monomials, without loss of generality, we may assume that 
	$x^{m_1}y^{n_1}=g_1^i g_2^{k-i}$ for some $0<i<k$. This implies that $z$ does not divide $g_1, g_2$, which contradicts to the existence of $y^{m_2}z^{n_2}$.
 For Case~(3.ii), as $x^{m_1}$ and $y^{m_2}z^{m_3}$ are monomials of degree $d$, then possibly switching $g_1, g_2$ we have $g_1^k=x^{m_1}$ and $g_2^k=y^{m_2}z^{m_3}$, which implies that $k$ divides $m_1, m_2, m_3$, but this is absurd as $\gcd(m_1, m_2, m_3)=1$. 
\end{rem}

	\begin{rem}\label{irreducible remark 2}
	The readers should be warned that when applying Theorem~\ref{nefness}, $\mathbb{P}(b_k, b_{n+1}, b_{n+2})$ may not be well-formed, that is, $b_k, b_{n+1}, b_{n+2}$ may not be coprime to each other. In this case, to apply Lemma~\ref{irreducible}, we should firstly make it well-formed by dividing out common factors. Such a procedure will be illustrated in Example~\ref{ex cj3}.
	\end{rem}

	The next lemma concerns the change of Picard numbers under a crepant blow-up of a canonical cyclic quotient singularity.
	
	\begin{lem}\label{can sing res}
		Let $(X,Q)$ be a germ of $n$-fold isolated cyclic quotient canonical singularity of type $\frac{1}{r}(a_1,\dots,a_n)$. Then there is a terminalization $ X'\rightarrow (X,Q)$ such that 
	 $$\rho(X'/X)=\#\{m\in \mathbb{Z}\mid \sum_{i=1}^n\big\{\frac{ma_i}{r}\big\}=1, 1\leq m \leq r-1\}.$$
	\end{lem}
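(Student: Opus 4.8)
The plan is to use the toric description of the cyclic quotient singularity $(X,Q)=\mathbb{A}^n/{\bm \mu}_r$ from Subsection~\ref{sec quot sing}, together with Reid's characterization of crepant divisors over a toric singularity in terms of lattice points. Recall $(X,Q)=T_N(\Delta)$ where $N=\overline N+\mathbb{Z}\cdot\frac1r(a_1,\dots,a_n)$, $\sigma=\mathbb{R}_{\geq0}^n$, and the monomial lattice $M\subset\overline M$ is dual to $N$. A toric valuation corresponding to a primitive vector $v\in N\cap\sigma$ is crepant over $X$ precisely when $v$ lies on the hyperplane $\{u\in N_\mathbb{R}\mid \langle u,\sum_i e_i^*\rangle=1\}$, i.e.\ when the sum of the barycentric-type coordinates of $v$ with respect to the generators of $\sigma$ equals $1$ (this is the standard discrepancy computation for toric resolutions, and is exactly the content behind the canonical criterion in Lemma~\ref{can lem}). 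The primitive lattice points of $N$ strictly inside $\sigma$ on this hyperplane are in natural bijection with the residues $m\in\{1,\dots,r-1\}$ satisfying $\sum_{i=1}^n\{\frac{ma_i}{r}\}=1$, via $m\mapsto v_m:=\frac1r(\overline{ma_1},\dots,\overline{ma_n})$ where $\overline{\,\cdot\,}$ denotes the representative in $\{0,1,\dots,r-1\}$; the hyperplane condition $\sum_i\overline{ma_i}=r$ is exactly $\sum_i\{ma_i/r\}=1$, and primitivity together with the isolated-singularity hypothesis $\gcd(a_i,r)=1$ guarantees each such $v_m$ is a genuine lattice point not on the boundary of $\sigma$.

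The main construction step is then: take a simplicial, basic (i.e.\ nonsingular-as-far-as-possible) subdivision $\Delta'$ of $\Delta=\sigma$ whose rays are exactly $\mathbb{R}_{\geq0}e_1,\dots,\mathbb{R}_{\geq0}e_n$ together with the rays $\mathbb{R}_{\geq0}v_m$ for the residues $m$ above, and \emph{no other rays}. One must check such a subdivision exists — this is a standard fact that any fan can be refined to a simplicial one using only a prescribed set of rays that includes all the rays we want, by iterated star subdivision; here we additionally want the refinement to introduce no further rays, which is possible because once all the lattice points on the relevant hyperplane (and the original rays) are taken as rays, any remaining cone of the triangulation can be subdivided using only those rays (stellar subdivisions at already-present rays, or "pulling" triangulations). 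Let $X'=T_N(\Delta')\to X$ be the induced proper birational morphism. By construction every exceptional divisor of $X'\to X$ corresponds to one of the added rays $\mathbb{R}_{\geq0}v_m$, which lie on the discrepancy-zero hyperplane, so $X'\to X$ is crepant; and since $X$ is canonical, Lemma~\ref{can lem} says every lattice point of $N\cap\sigma$ has coordinate-sum $\geq1$, so there are no rays forcing negative discrepancy and $X'$ has terminal (indeed $\mathbb{Q}$-factorial, by simpliciality) singularities — so $X'\to X$ is a terminalization.

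Finally I would compute $\rho(X'/X)$. Since $X'\to X$ is a proper birational toric morphism with $X$ affine, $\rho(X'/X)$ equals the number of torus-invariant prime divisors contracted by the map, which is precisely the number of rays of $\Delta'$ not already rays of $\Delta$, i.e.\ the number of added rays $\mathbb{R}_{\geq0}v_m$. By the bijection above this equals $\#\{m\in\mathbb{Z}\mid \sum_{i=1}^n\{\frac{ma_i}{r}\}=1,\ 1\leq m\leq r-1\}$, which is the claimed formula. (For the standard identity relating relative Picard number of a toric birational morphism to the count of new rays one can cite the exact-sequence computation of $\Pic$ of toric varieties, e.g.\ as in Fulton's book; over an affine base the class group contribution collapses and only the new invariant divisors survive.)

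The step I expect to be the main obstacle is verifying that the desired simplicial refinement $\Delta'$ can be chosen to use \emph{exactly} the rays $\{\mathbb{R}_{\geq0}e_i\}\cup\{\mathbb{R}_{\geq0}v_m\}$ and no others — in general a simplicial (or even crepant terminal) toric resolution might be forced to introduce extra rays, and one must argue that, because $X$ is canonical, all lattice points of $N\cap\sigma$ other than the $v_m$ and the $e_i$ lie \emph{strictly above} the discrepancy hyperplane and hence can be avoided: any triangulation of the polytope $\mathrm{conv}(0,e_1,\dots,e_n)$ using only the vertices $0,e_1,\dots,e_n$ and the lattice points $v_m$ on the section $\{\sum x_i=1\}$ extends to a fan refinement of $\sigma$ not meeting any higher lattice point, and such a triangulation exists (e.g.\ a placing/pulling triangulation of the section polytope $\mathrm{conv}(e_1,\dots,e_n,v_1,\dots)$, coned off from the origin). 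Making this precise — and confirming the resulting $X'$ is genuinely $\mathbb{Q}$-factorial terminal rather than merely canonical — is where the real care is needed; the rest is bookkeeping with the correspondence $m\leftrightarrow v_m$.
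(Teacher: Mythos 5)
Your argument is correct and follows essentially the same route as the paper: the paper's proof also passes to the toric picture, takes the set $S$ of lattice points of $N$ in the open orthant on the hyperplane $\sum_i x_i=1$ (identified with the residues $m$ exactly as in your bijection), subdivides $\sigma$ by these points into simplicial cones, checks terminality via Lemma~\ref{can lem}, and reads off $\rho(X'/X)=\#S$. The only difference is that the paper states these steps tersely, whereas you spell out the existence of a subdivision with precisely the prescribed rays and the crepancy/terminality verification, which are exactly the details the paper leaves to the reader.
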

\begin{proof}
This can be seen by toric geometry. Recall the notation in Subsection~\ref{sec quot sing}. Let $\overline{M}\simeq \mathbb{Z}^{n}$ and $\overline{N}$ its dual. Define $N$ by $N=\overline{N}+\mathbb{Z}\cdot \frac{1}{r}(a_1, \dots, a_n)$ and $M\subset \overline{M}$ the dual sublattice. Let $\sigma=\mathbb{R}_{\geq 0}^{n}\subset N_\mathbb{R}$ be the positive quadrant and $\sigma^\vee \subset M_\mathbb{R}$ the dual quadrant. Then
\[
X=\text{Spec}~\bC[{M}\cap \sigma^\vee]=T_N(\Delta),
\]
where $\Delta$ is the fan corresponding to $\sigma$.
Consider the set of lattice points \begin{align*}
S={}&\{ (x_1,\dots,x_n)\in N\cap \mathbb{R}_{> 0}^{n} \mid \sum_{i=1}^nx_i=1\}\\
={}&\{ (\big\{\frac{ma_1}{r}\big\}, \dots, \big\{\frac{ma_n}{r}\big\})\mid \sum_{i=1}^n\big\{\frac{ma_i}{r}\big\}=1, 1\leq m \leq r-1\}.\end{align*}
Take $\sigma(S)$ to be any subdivision of $\sigma$ by $S$ into simplicial cones, and take $\Delta(S)$ to be the corresponding fan. Then it can be checked by Lemma~\ref{can lem} that $X'=T_N(\Delta(S))$ is the required terminalization, and $\rho(X'/X)=\#S$.
	%
\end{proof}

	\begin{exmp}\label{eg can sing res} We illustrate Lemma~\ref{can sing res} by the following examples in dimension $3$.
	\begin{enumerate}
	\item If $X$ is of type $\frac{1}{r}(1,1,r-2)$ with $r$ odd, then
	$$
	S=\{ \frac{1}{r}(m, m, r-2m)\mid 1\leq m \leq \lfloor \frac{r}{2}\rfloor\}.
	$$
	We can take $\sigma(S)$ to be the subdivision of $\sigma=\cone(e_1, e_2, e_3)$ into $\cone(e_1, e_2, e_{\lfloor \frac{r}{2}\rfloor+3})$, $\cone(e_1, e_{m}, e_{m+1})$, and $\cone(e_2, e_{m}, e_{m+1})$ for $m=3, \dots, \lfloor \frac{r}{2}\rfloor+2$, where $e_1=(1,0,0)$, $e_2=(0,1,0)$, $e_3=(0,0,1)$ and, for $m>3$, $e_m=\frac{1}{r}(m-3, m-3, r-2m+6)$.
	It is easy to check that the resulting $X'$ is smooth and $\rho(X'/X)=\lfloor r/2 \rfloor$.
	
	\item If $X$ is of type $\frac{1}{7}(1,2,4)$, then
	$$
	S=\{ \frac{1}{7}(1,2,4), \frac{1}{7}(2,4,1), \frac{1}{7}(4,1,2)\}.
	$$
	We can take $\sigma(S)$ to be the subdivision of $\sigma=\cone(e_1, e_2, e_3)$ into $\cone(e_1, e_5, e_6)$, $\cone(e_1, e_2, e_5)$, $\cone(e_2, e_4, e_5)$, $\cone(e_2, e_3, e_4)$, 
	$\cone(e_3, e_4, e_6)$, $\cone(e_1, e_3, e_6)$, $\cone(e_4, e_5, e_6)$. Here $e_1=(1,0,0)$, $e_2=(0,1,0)$, $e_3=(0,0,1)$, $e_4= \frac{1}{7}(1,2,4)$, $ e_5=\frac{1}{7}(2,4,1)$, $e_6=\frac{1}{7}(4,1,2)$.
	It is easy to check that the resulting $X'$ is smooth and $\rho(X'/X)=3$.

\item If $X$ is of type $\frac{1}{4}(1,2,3)$, then
	$
	S=\{ \frac{1}{2}(1,0,1)\}.
	$
	We can take $\sigma(S)$ to be the subdivision of $\sigma=\cone(e_1, e_2, e_3)$ into $\cone(e_1, e_2, e_4)$, $\cone(e_2, e_3, e_4)$. Here $e_1=(1,0,0)$, $e_2=(0,1,0)$, $e_3=(0,0,1)$, $e_4= \frac{1}{2}(1,0,1)$.
	It is easy to check that the resulting $X'$ has 2 cyclic quotient singularities of type $\frac{1}{2}(1,1,1)$ and $\rho(X'/X)=1$.
	
	\end{enumerate}
	\end{exmp}

\section{Applications of the nefness criterion in constructing minimal varieties}\label{sec 4}

\subsection{General construction}\label{construction process}\

 In practice, by applying Theorem~\ref{nefness}, it is possible to search numerous minimal varieties by a computer program. The effectivity may follow from the following steps: 
\begin{quote}
Pick up a general weighted hypersurface of dimension $n$, say
$$X=X^n_d\subset \mathbb{P}(a_0,a_1,...,a_{n+1}).$$
\begin{enumerate}
 \item[{\bf Step 0.}] Check that $X$ is well-formed and quasismooth by Definition~\ref{wellform} and Theorem~\ref{2.7};
 \item[{\bf Step 1.}] Compute singularities of $X$ and check that $X$ has a unique non-canonical singularity $Q$ by Proposition~\ref{non iso can}.
 \item[{\bf Step 2.}] Verify that $X$ and $Q$ satisfy the conditions of Theorem~\ref{nefness} using Lemma~\ref{irreducible}, thus 
 one obtains a weighted blow-up $f:\widetilde{X}\rightarrow X$ at $Q$ such that $K_{\widetilde{X}}$ is nef;
 \item[{\bf Step 3.}] Compute singularities of $\widetilde{X}$ and check that $\widetilde{X}$ has only canonical singularities by Proposition~\ref{wb};
 \item[{\bf Step 4.}] Take a terminalization $g:\widehat{X}\rightarrow \widetilde{X}$ by Lemma~\ref{can sing res}. 
\end{enumerate}
In the end, if $X$ passes through all above steps, then the resulting 
$\widehat{X}$ is a minimal projective $n$-fold with $\bQ$-factorial terminal singularities.
\end{quote}

	
\subsection{Examples of minimal $3$-folds of general type with canonical volume less than $1$}\



Aiming at finding minimal $3$-folds with canonical volume less than 1, we take a 
general weighted hypersurfaces, say $X=X_d\subset \mathbb{P}(a_0,a_1,...,a_4)$ with $1\leq \alpha=d-\sum_{i=0}^4a_i\leq 10$ and $10\leq d\leq 100$, and apply Construction~\ref{construction process}. This will output at least 46 families of minimal $3$-folds of general type, which are listed in 
Table~\ref{tableA} and Table~\ref{tableAp}. Table~\ref{tableA} consists of those $X$ with only isolated singularities, while Table~\ref{tableAp} consists of those $X$ with non-isolated singularities. 

 Here we explain the contents of the tables: each row contains a well-formed quasismooth general hypersurface $X=X_d\subset \mathbb{P}(a_0,a_1,...,a_4).$
	 The columns of the tables contain the following information: 
\begin{center}
	\begin{tabular}{r p{10cm}}
		\hline
		$\alpha$:& The amplitude of $X$, i.e., $\alpha=d-\sum a_i$;\\
		$\deg$:& The degree of $X$, which is $d$;\\
		weight:& Weights of $\bP(a_0,a_1,a_2,a_3,a_4)$;\\
		B-weight:& $\frac{1}{r}(e_1, e_2, e_3)$, the unique non-canonical singularity in $X$, to which we apply Theorem~\ref{nefness};\\
		$\Vol$: & the canonical volume of $\widehat{X}$, i.e., $K_{\widehat{X}}^3$;\\
		$P_2$: & $h^0(\widehat{X}, 2K_{\widehat{X}})$;\\
		$\chi$: & The holomorphic Euler characteristic of $\mathcal{O}_{\widehat{X}}$;\\
		$\rho$: & The Picard number of $\widehat{X}$;\\
		basket: & The Reid basket of $\widehat{X}$.\\		
		\hline
	\end{tabular}
\end{center}
\medskip

Here $\Vol(X)=K_{\widehat{X}}^3$ can be computed by Proposition~\ref{wb}. Note that Proposition~\ref{wb} implies that $K_{\widetilde{X}}+\frac{r-e_1-e_2-e_3}{r}E=f^*K_X$. In all listed examples, since $2(e_1+e_2+e_3)>r$, we see that, for $m=1,2$,
$$h^0(\widehat{X}, m K_{\widehat{X}})=h^0({\widetilde{X}}, mK_{\widetilde{X}})=h^0({\widetilde{X}}, \lfloor mf^*(K_{{X}})\rfloor )=h^0({{X}}, mK_{{X}})
$$
where the last item can be computed on $X$ by counting the number of monomials of degree $m\alpha$. Besides, we have 
$$\chi(\mathcal{O}_{\widehat{X}})=\chi(\mathcal{O}_X)=1-h^0(X, K_X)$$ by \cite[Theorem 3.2.4(iii)]{WPS}. By virtue of Proposition~\ref{non iso can}, any singularity of $\widehat{X}$ lies over a vertex $P_i\in X$ or over some point on $\overline{P_iP_j}\cap X$ for some $i$ and $j$. Hence $\rho(\widehat{X})$ and the basket $B_{\widehat{X}}$
can be computed using Proposition~\ref{non iso can}, Proposition~\ref{wb}, Lemma~\ref{can sing res}, and Example~\ref{eg can sing res}. 
All examples in Table~\ref{tableA} and Table~\ref{tableAp} have been manually verified. In fact, this is not a hard work at all (see examples following the tables). 

The reason that we split into two tables is that all examples in Table~\ref{tableA} have isolated canonical singularities, so we can compute the Picard number $\rho(\widehat{X})$ of the minimal model $\widehat{X}$ easily by Lemma~\ref{can sing res}; on the other hand, all examples in Table~\ref{tableB} have non-isolated canonical singularities, for which $\rho(\widehat{X})$ is harder to compute, so we omit the computations of $\rho(\widehat{X})$ in this table.
		
\medskip
{\tiny
\begin{longtable}{|l|l|l|l|l|l|l|l|l| p{4.5cm} |}
\caption{Minimal $3$-folds of general type, I}\label{tableA}\\
		\hline
			No.&$\alpha$ & deg & weight & B-weight & $\Vol$&$P_2$ & $\chi$&$\rho$ &basket \\ \hline	
\endfirsthead
\multicolumn{5}{l}{{ {\bf \tablename\ \thetable{}} \textrm{-- continued from previous page}}} \\
\hline 
			No.&$\alpha$ & deg & weight & B-weight & $\Vol$&$P_2$ & $\chi$&$\rho$ &basket \\ \hline	
\endhead

 \multicolumn{5}{l}{{\textrm{Continued on next page}}} \\ \hline
\endfoot
\hline 
\endlastfoot

1&1&12&$(1,1,2,2,5)$&$\frac{1}{5}(1,1,2)$&$\frac{1}{2}$&5&-1&2&$ 7\times(1,2) $\\ \hline
2&1&13&$(1,1,2,3,5)$&$\frac{1}{5}(1,1,2)$&$\frac{1}{3}$&4&-1&2&$ 2\times(1,2),\ (1,3) $\\ \hline
3&1&15&$(1,1,2,3,7)$&$\frac{1}{7}(1,2,3)$&$\frac{1}{3}$&4&-1&2&$ 2\times(1,2),\ (1,3) $\\ \hline
4&1&16&$(1,2,2,3,7)$&$\frac{1}{7}(1,2,3)$&$\frac{1}{6}$&3&0&2&$ 2\times(1,3),\ 9\times(1,2) $\\ \hline
5&1&18&$(1,2,3,4,7)$&$\frac{1}{7}(1,2,3)$&$\frac{1}{12}$&2&0&2&$ (1,4),\ (1,3),\ 5\times(1,2) $\\ \hline
6&1&26&$(2,3,4,5,11)$&$\frac{1}{11}(2,3,5)$&$\frac{1}{60}$&1&1&2&$ 2\times(1,3),\ (1,4),\ 2\times(2,5),\ 7\times(1,2) $\\ \hline
7&1&28&$(2,3,4,5,13)$&$\frac{1}{13}(3,4,5)$&$\frac{1}{60}$&1&1&2&$ 2\times(1,3),\ 2\times(2,5),\ (1,4),\ 7\times(1,2) $\\ \hline
8&1&33&$(3,4,5,7,13)$&$\frac{1}{13}(3,4,5)$&$\frac{1}{210}$&0&1&2&$ 2\times(1,4),\ 2\times(2,5),\ (2,7),\ (1,3) $\\ \hline
9&2&35&$(1,3,5,7,17)$&$\frac{1}{17}(3,5,7)$&$\frac{16}{105}$&2&0&2&$ 2\times(1,3),\ (3,7),\ (1,5) $\\ \hline
10&3&28&$(2,2,3,5,13)$&$\frac{1}{13}(2,3,5)$&$\frac{9}{10}$&5&0&4&$ 2\times(1,5),\ 15\times(1,2) $\\ \hline
11&3&30&$(2,3,4,5,13)$&$\frac{1}{13}(2,3,5)$&$\frac{9}{20}$&3&0&3&$ (1,4),\ (1,5),\ 8\times(1,2) $\\ \hline
12&3&33&$(1,4,5,7,13)$&$\frac{1}{13}(1,4,5)$&$\frac{27}{70}$&3&0&2&$ 2\times(1,4),\ 2\times(2,5),\ (3,7) $\\ \hline
13&3&35&$(1,2,5,7,17)$&$\frac{1}{17}(2,5,7)$&$\frac{27}{35}$&5&-1&2&$ 2\times(1,2),\ (2,7),\ (1,5) $\\ \hline
14&3&35&$(1,4,5,7,15)$&$\frac{1}{15}(1,4,7)$&$\frac{27}{70}$&3&0&2&$ 2\times(1,4),\ (3,7),\ 2\times(2,5) $\\ \hline
15&3&36&$(1,4,6,7,15)$&$\frac{1}{15}(1,4,7)$&$\frac{9}{28}$&3&0&3&$ 2\times(3,7),\ (1,4),\ 3\times(1,2) $\\ \hline
16&3&40&$(2,3,5,7,20)$&$\frac{1}{7}(3,1,2)$&$\frac{7}{30}$&2&0&3&$ 3\times(1,2),\ (1,3),\ 2\times(1,5) $\\ \hline
17&3&40&$(2,4,5,7,19)$&$\frac{1}{19}(4,5,7)$&$\frac{27}{140}$&2&1&2&$ 2\times(2,7),\ (2,5),\ (1,4),\ 10\times(1,2) $\\ \hline
18&3&42&$(2,5,7,8,17)$&$\frac{1}{17}(2,5,7)$&$\frac{27}{280}$&1&1&2&$ 2\times(1,5),\ (3,8),\ (2,7),\ 6\times(1,2) $\\ \hline
19&4&40&$(2,3,5,7,19)$&$\frac{1}{19}(3,5,7)$&$\frac{64}{105}$&3&0&2&$ 2\times(1,3),\ 2\times(2,7),\ (2,5) $\\ \hline
20&4&49&$(3,5,7,11,19)$&$\frac{1}{19}(3,5,7)$&$\frac{128}{1155}$&1&1&2&$ 2\times(1,3),\ 2\times(2,5),\ (5,11),\ (2,7) $\\ \hline
21&5&42&$(2,3,7,8,17)$&$\frac{1}{17}(2,3,7)$&$\frac{125}{168}$&4&0&2&$ (3,8),\ (3,7),\ (1,3),\ 6\times(1,2) $\\ \hline
22&5&42&$(2,4,5,7,19)$&$\frac{1}{19}(2,5,7)$&$\frac{25}{28}$&4&0&6&$ (1,4),\ (1,7),\ 11\times(1,2) $\\ \hline
23&5&48&$(2,7,9,12,13)$&$\frac{1}{13}(3,4,5)$&$\frac{383}{1260}$&1&1&2&$ (1,7),\ (2,9),\ (2,5),\ (1,4),\ 2\times(1,3),\ 4\times(1,2) $\\ \hline
24&5&60&$(2,5,7,11,30)$&$\frac{1}{11}(3,5,1)$&$\frac{29}{105}$&2&0&6&$ (1,7),\ (2,5),\ (1,3),\ 2\times(1,2) $\\ \hline
25&7&60&$(4,5,6,11,27)$&$\frac{1}{27}(4,5,11)$&$\frac{343}{660}$&2&1&2&$ 2\times(3,11),\ (2,5),\ (1,4),\ 5\times(1,2),\ (1,3) $\\ \hline
26&7&70&$(2,7,9,10,35)$&$\frac{1}{5}(1,1,2)$&$\frac{4}{9}$&3&0&8&$ (2,9),\ 8\times(1,2) $\\ \hline
27&7&70&$(3,5,7,13,35)$&$\frac{1}{13}(5,3,2)$&$\frac{13}{30}$&2&0&9&$ (1,3),\ (1,2),\ 3\times(1,5) $\\ \hline
28&9&60&$(3,5,11,12,20)$&$\frac{1}{11}(1,4,3)$&$\frac{9}{10}$&3&0&8&$ 2\times(1,4),\ 3\times(2,5) $\\ \hline
29&9&60&$(5,7,11,12,16)$&$\frac{1}{11}(2,5,3)$&$\frac{989}{1680}$&1&1&2&$ (1,7),\ (1,3),\ (2,5),\ (1,2),\ (5,16)$, $(1,4)$\\ \hline
30&9&90&$(2,5,11,18,45)$&$\frac{1}{11}(3,2,5)$&$\frac{11}{15}$&4&0&6&$ 3\times(2,5),\ 6\times(1,2),\ (1,3) $\\ \hline
31&10&60&$(5,7,11,12,15)$&$\frac{1}{7}(1,1,3)$&$\frac{16}{33}$&2&0&10&$ 2\times(1,3),\ (3,11) $\\

\end{longtable}
}

{\tiny
\begin{longtable}{|l|l|l|l|l|l|l|l| p{4.5cm} |}
	\caption{Minimal $3$-folds of general type, II}\label{tableAp}\\
		\hline
		No.&$\alpha$ & deg & weight & B-weight & $\Vol$&$P_2$ & $\chi$ &basket \\ \hline	
\endfirsthead
\multicolumn{5}{l}{{ {\bf \tablename\ \thetable{}} \textrm{-- continued from previous page}}} \\
\hline 
		No.&$\alpha$ & deg & weight & B-weight & $\Vol$&$P_2$ & $\chi$ &basket \\ \hline	
\endhead

 \multicolumn{5}{l}{{\textrm{Continued on next page}}} \\ \hline
\endfoot
\hline 
\endlastfoot

1&2&21&$(1,2,3,5,8)$&$\frac{1}{8}(1,2,3)$&$\frac{8}{15}$&4&-1&$ (1,5),\ (1,3) $\\ \hline
2&2&21&$(1,2,4,5,7)$&$\frac{1}{5}(1,2,1)$&$\frac{1}{2}$&4&-1&$ 3\times(1,2) $\\ \hline

3&2&25&$(1,3,4,5,10)$&$\frac{1}{10}(1,3,4)$&$\frac{4}{15}$&3&0&$ 2\times(1,3),\ 4\times(1,2),\ 2\times(2,5) $\\ \hline
4&2&29&$(1,3,4,5,14)$&$\frac{1}{14}(3,4,5)$&$\frac{4}{15}$&3&0&$ 2\times(1,3),\ 4\times(1,2),\ 2\times(2,5) $\\ \hline
5&2&35&$(3,4,5,7,14)$&$\frac{1}{14}(3,4,5)$&$\frac{4}{105}$&1&1&$ 2\times(1,3),\ 4\times(1,2),\ (2,5),\ 2\times(3,7) $\\ \hline
6&3&26&$(2,3,5,6,7)$&$\frac{1}{7}(3,1,2)$&$\frac{8}{15}$&3&0&$ (1,5),\ 8\times(1,2),\ (1,3) $\\ \hline
7&3&28&$(1,3,4,6,11)$&$\frac{1}{11}(1,3,4)$&$\frac{3}{4}$&5&-1&$ 5\times(1,2),\ (1,4) $\\ \hline
8&3&32&$(2,3,3,5,16)$&$\frac{1}{5}(1,1,2)$&$\frac{1}{2}$&4&-1&$ 3\times(1,2) $\\ \hline
9&4&35&$(2,3,5,7,14)$&$\frac{1}{14}(2,3,5)$&$\frac{64}{105}$&3&0&$ 2\times(1,3),\ (2,5),\ 2\times(2,7) $\\ \hline
10&4&37&$(2,3,5,7,16)$&$\frac{1}{16}(2,3,7)$&$\frac{64}{105}$&3&0&$ 2\times(1,3),\ (2,5),\ 2\times(2,7) $\\ \hline

11&6&51&$(2,7,8,11,17)$&$\frac{1}{11}(1,4,3)$&$\frac{9}{28}$&2&0&$ (2,7),\ 3\times(1,4) $\\ \hline
12&6&55&$(3,4,5,11,26)$&$\frac{1}{26}(4,5,11)$&$\frac{36}{55}$&3&0&$ 4\times(1,2),\ (4,11),\ (1,5) $\\ \hline

13&8&65&$(2,5,9,13,28)$&$\frac{1}{28}(2,5,13)$&$\frac{512}{585}$&3&0&$ (2,9),\ (4,13),\ (1,5) $\\ \hline
14&9&62&$(5,6,7,8,27)$&$\frac{1}{27}(5,6,7)$&$\frac{243}{280}$&3&1&$ 2\times(2,5),\ 8\times(1,2),\ 2\times(2,7),\ (3,8) $\\ \hline
15&9&70&$(5,6,7,16,27)$&$\frac{1}{27}(5,6,7)$&$\frac{243}{560}$&2&1&$ 7\times(1,2),\ (5,16),\ (2,7),\ (2,5) $\\

\end{longtable}
}


Taking a couple of typical examples as follows, we illustrate on how to do the manual verification. 

\begin{exmp}[Table~\ref{tableA}, No.~8] Consider the general hypersurface 
$$X=X_{33}\subset \mathbb{P}(3,4,5,7,13),$$
which is clearly well-formed and quasismooth by Definition~\ref{wellform} and Proposition~\ref{2.7}. One also knows that $\alpha = 1$, $p_g = 0$, $P_2=0$, and $\chi(\mathcal{O}_X) = 1$. The set of singularities of $X$ is
	$$\text{\rm Sing}(X) = \{\frac{1}{4}(3,3,1),\ \frac{1}{5}(3,4,1),\ \frac{1}{7}(6,1,5),\ Q=\frac{1}{13}(3,4,5)\},$$
	where the first $3$ singularities are terminal, while the last one is non-canonical.
	
	Here we illustrate how to use Proposition~\ref{non iso can} to determine $\text{\rm Sing}(X)$. 
	Denote $P_0, \dots, P_4$ to be the vertices. As $3 | 33$,  $P_0\not \in X$ while $P_{1}, \dots, P_4\in X$. For $P_4$, we have $13 | 33-7$, so $P_4$ is a singularity of type $\frac{1}{13}(3,4,5)$. Similarly we can determine the types of $P_1, P_2, P_3$. As $3, 4, 5, 7, 13$ are pairwisely coprime, there are no singular points on $P_iP_j\cap X$.

	For applying Theorem~\ref{nefness}, we take 
	$$(b_1,b_2,b_3,b_4,b_5) = (3,4,5,7,13),$$
	$$(e_1,e_2,e_3)=(3,4,5),$$
	$r=13$, and $k=1$. Conditions $(1),(2),(4)$ follow from direct computations (or Remark~\ref{nefrk}). Condition $(3)$ means to check that a general curve $C_{33}\subset\mathbb{P}(3,7,13)$ is irreducible, which follows 
	immediately from Lemma~\ref{irreducible} and Remark~\ref{irreducible remark}.
	
	So by Theorem~\ref{nefness}, we can take a weighted blow-up $f:\widetilde{X}\to X$ at the point $Q$ with weight $(3,4,5)$ such that $K_{\widetilde{X}}$ is nef. On the exceptional divisor $E$ there are $3$ new singularities:
	$$\frac{1}{3}(2,1,2),\ \frac{1}{4}(3,3,1),\ \frac{1}{5}(3,4,2),$$ all of which are terminal. Hence $\widetilde{X}$ is a minimal $3$-fold and $\widehat{X}=\widetilde{X}$. Applying the volume formula for weighted blow-ups (cf. Proposition~\ref{wb}), we get $\Vol(\widehat{X})=K_{\widetilde{X}}^3 = \frac{1}{210}$.
	Since $\rho(X)=1$ by \cite[Theorem 3.2.4(i)]{WPS}, after one weighted blow-up the Picard number becomes $2$.
	 Finally, we collect the singularities of $\widehat{X}$ and obtain the Reid basket 
	$$B_{\widehat{X}} = \{ (1,3),\ 2\times(1,4),\ 2\times(2,5),\ (2,7)\}.$$\qed
\end{exmp}


\begin{exmp}[Table~\ref{tableA}, No.~27] Consider the general hypersurface
	$$X=X_{70}\subset \mathbb{P}(3,5,7,13,35), $$
which is well-formed and quasismooth by Definition~\ref{wellform} and Proposition~\ref{2.7}.
	It is clear that $\alpha = 7$, $P_2 = 2$ and $\chi(\mathcal{O}_X) = 0$. Moreover
	$$\text{\rm Sing}(X) = \{\frac{1}{3}(2,1,2),\ 2\times \frac{1}{5}(1,4,1),\ 2\times\frac{1}{7}(1,4,2),\ Q = \frac{1}{13}(5,3,2)\}.$$

	Here we illustrate how to use Proposition~\ref{non iso can} to determine $\text{\rm Sing}(X)$. 
	Denote $P_0, \dots, P_4$ to be the vertices. Note that  $P_0, P_3  \in X$ while $P_{1}, P_2, P_4\not\in X$. For $P_3$, we have $13 | 70-5$, so $P_4$ is a singularity of type $\frac{1}{13}(3,7,35)=\frac{1}{13}(3, 7, 9)=\frac{1}{13}(5, 3, 2)$. Similarly we can determine the type of $P_0$. For edges, there are singular points on $P_1P_4\cap X$ and $P_2P_4\cap X$. For $P_1P_4\cap X$, there are $\lfloor \frac{5\times 70}{5\times 35}\rfloor=2$ singular points of type $\frac{1}{5}(3,7,13)=\frac{1}{5}(1,4,1)$;  for $P_2P_4\cap X$, there are $\lfloor \frac{7\times 70}{7\times 35}\rfloor=2$ singular points of type $\frac{1}{7}(3,5,13)=\frac{1}{7}(1,4,2)$.

	Take $ (b_1,...,b_5) = (3,7,35,5,13)$, $(e_1,e_2,e_3) = (5,3,2)$, 
	$r = 13$, and $k = 3$. One can check that the conditions of Theorem~\ref{nefness} are satisfied and hence, after a weighted blow-up with weight $(5,3,2)$ at $Q$, we get
	$f:\widetilde{X}\rightarrow X$
	so that $K_{\widetilde{X}}$ is nef and $K^3_{\widetilde{X}}=\frac{13}{30}$.	By Proposition~\ref{wb}, we see that
	$$\text{\rm Sing}(\widetilde{X}) = \{\frac{1}{2}(1,1,1),\ \frac{1}{3}(2,1,2),\ 3\times \frac{1}{5}(1,4,1),\ 2\times\frac{1}{7}(1,4,2),\ 
	\frac{1}{3}(1,1,1)\}.$$
 These are all canonical singularities, among which three are non-terminal: 
 $$\{2\times\frac{1}{7}(1,2,4),\ 
 \frac{1}{3}(1,1,1)\}.$$
	By Lemma~\ref{can sing res} and Example~\ref{eg can sing res}, there exists a terminalization $g:\widehat{X}\rightarrow\widetilde{X}$
	such that $$B_{\widehat{X}}=B_{\widetilde{X}}
	= \{(1,2),\ (1,3),\ 3\times(1,5)\} $$	
	and $\rho(\widehat{X}) = \rho(\widetilde{X})+ \lfloor 3/2 \rfloor +2\times 3 = 9$.
Since $g$ is crepant, $K_{\widehat{X}}^3=K^3_{\widetilde{X}}=\frac{13}{30}$. 
\qed
\end{exmp}

 \begin{rem} Note that all examples found by Iano-Fletcher \cite{Fle00} are of Picard number $1$ by \cite[Theorem 3.2.4(i)]{WPS}, whereas ours are of Picard number at least $2$, so our examples are birationally different from those of Iano-Fletcher, as birationally equivalent minimal varieties have the same Picard number (\cite[Theorem 3.52(2)]{K-M}). On the other hand, most of our examples have different deformation invariants (e.g., $\Vol(\widehat{X})$, $P_2$, or basket) from those of Iano-Fletcher's, except for Table~\ref{tableA}, No.~1-8 and Table~\ref{tableAp}, No.~2 \& 10, of which the invariants coincide with those of certain example in \cite[Table~3]{Fle00}. Probably the two $3$-folds in each of these counterparts are mutually deformation equivalent. 
\end{rem}
	 



\subsection{Examples of minimal $3$-folds of Kodaira dimension $2$}\

As a direct consequence of Theorem~\ref{nefness}, we can construct infinite series of minimal $3$-folds of Kodaira dimension $2$. 

\begin{lem}\label{construction kod 2}
Let $X_d\subset \mathbb{P}(b_1, b_2, b_3, b_4, b_5)$ be a $3$-dimensional well-formed quasismooth general hypersurface of degree $d$ with $\alpha=d-\sum_{i=1}^{5}b_i>0$ where $b_1,\dots, b_5$ are not necessarily ordered by size. Denote by $x_1$, $x_2$, $x_3$, $x_4$, $x_{5}$ the homogenous coordinates of $\mathbb{P}(b_1, b_2, b_3, b_4, b_5)$. Suppose that $X$ has a cyclic quotient singularity at the point $Q=(x_1=x_2=x_{3}=0)$ of type $\frac{1}{r}(e_1,e_2, e_3)$ where positive integers $e_1$, $e_2$, $e_3$ are coprime to each other, $\sum_{i=1}^3e_i<r$ and $x_1$, $x_2$, $x_3$ are the local coordinates of $Q$ corresponding to the weights $\frac{e_1}{r}$, $\frac{e_2}{r}$, $\frac{e_3}{r}$ respectively. Let $\pi: Y\to X$ be the weighted blow-up at $Q$ with weight $(e_1,e_2,e_3)$.
Assume that $\{d, (b_i)_{i=1}^5, (e_i)_{i=1}^3\}$ belongs to one of the cases listed in Table~\ref{tab kod 2 possible}:
{\normalsize
\begin{longtable}{c|c|c}
\caption{}\label{tab kod 2 possible}\\
\hline \endfirsthead\hline \endhead
\hline\endfoot\hline \endlastfoot
$d$ & $(b_1, b_2, b_3, b_4, b_5)$ & $(e_1,e_2, e_3)$\\\hline
$6r$ & $(a,b,c,2r,3r)$ & $(a,b,c)$\\\hline
$3r+3k$ & $(a,b,r+k,3k,r) $ & $(a,b,k)$\\\hline
${4r+2k}$& $(a,b,2r+k,2k,r)$ & $(a,b,k)$\\
\end{longtable}}
\noindent
Then $K_Y$ is nef and $\nu(Y)=2$. 
\end{lem}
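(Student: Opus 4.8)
The plan is to verify, case by case, that each of the three families in Table~\ref{tab kod 2 possible} satisfies the hypotheses of Theorem~\ref{nefness} with $n=3$, so that $K_Y$ is nef and $\nu(Y)\geq 2$; then to separately argue that $\nu(Y)\leq 2$, i.e. $K_Y^3=0$, which will pin down $\nu(Y)=2$. The bookkeeping is uniform: in each row the first two weights $b_1=a$, $b_2=b$ coincide with the first two blow-up weights $e_1=a$, $e_2=b$, so I expect to apply Theorem~\ref{nefness} with $k=3$, and conditions (1) of that theorem (for the two indices $j=1,2$) should reduce to the equalities $\alpha e_j=b_j(r-\sum e_i)$, which hold because in all three cases one computes $\alpha=d-\sum b_i=r-\sum_{i=1}^3 e_i$ directly. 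This is precisely the situation of Remark~\ref{nefrk}: condition (1) is automatic and condition (2) becomes equivalent to $K_Y^3\geq 0$.

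First I would do the elementary arithmetic. For the row $d=6r$, $(b_i)=(a,b,c,2r,3r)$, $(e_i)=(a,b,c)$: here $\sum b_i=a+b+c+5r$ so $\alpha=6r-(a+b+c+5r)=r-(a+b+c)=r-\sum e_i$, as needed; similarly $d=3r+3k$ with $\sum b_i=a+b+(r+k)+3k+r=a+b+4k+2r$ gives $\alpha=3r+3k-(a+b+4k+2r)=r+k-a-b-k=(r+k)-(a+b+k)$, and indeed the relevant modulus here is $r'=r+k$ (the singularity $Q$ should be read with "$r$" $=r+k$ matching $b_3=r+k$), so $\alpha=r'-\sum e_i$; and $d=4r+2k$ with $\sum b_i=a+b+(2r+k)+2k+r=a+b+3k+3r$ gives $\alpha=4r+2k-(a+b+3k+3r)=r+2k-a-b-k=(2r+k)-(a+b+k)$ analogously with $r'=2r+k$. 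So in each case we are squarely in the framework of Remark~\ref{nefrk} and must only check $K_Y^3=0$, which by Proposition~\ref{wb} reads
\begin{align*}
K_Y^3=\frac{\alpha^3 d}{\prod_{i=1}^5 b_i}-\frac{(r'-\sum e_i)^3}{r'\,e_1e_2e_3}=\frac{\alpha^3 d}{\prod b_i}-\frac{\alpha^3}{r'\,abc}.
\end{align*}
Plugging in $d$ and the weights, the first term becomes $\frac{\alpha^3\cdot 6r}{abc\cdot 2r\cdot 3r}=\frac{\alpha^3}{abc\cdot r}$ in the first case (and $r'=r$ there), and the analogous computations in the other two rows give exactly $\frac{\alpha^3}{abc\cdot r'}$; so the two terms cancel and $K_Y^3=0$. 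This simultaneously verifies condition (2) of Theorem~\ref{nefness} (it is $K_Y^3\geq 0$) and gives $\nu(Y)\leq 2$.

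It remains to check conditions (3) and (4) of Theorem~\ref{nefness}. Condition (4) asks that $\mathbb{P}(e_1,e_2,e_3)=\mathbb{P}(a,b,c)$ (resp. $\mathbb{P}(a,b,k)$) be well-formed; I expect this to follow from well-formedness and quasismoothness of $X$ together with the coprimality hypotheses already imposed on $e_1,e_2,e_3$ in the statement, possibly after dividing out common factors as warned in Remark~\ref{irreducible remark 2} — though since the $e_i$ are assumed pairwise coprime, $\mathbb{P}(e_1,e_2,e_3)$ is automatically well-formed. Condition (3) asks that a general degree-$d$ hypersurface (curve) in $\mathbb{P}(b_k,b_4,b_5)=\mathbb{P}(c,2r,3r)$ (resp. $\mathbb{P}(r+k,3k,r)$, $\mathbb{P}(2r+k,2k,r)$) be irreducible; here I would invoke Lemma~\ref{irreducible} and Remark~\ref{irreducible remark}, after reducing to a well-formed weighted plane. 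In the first row $\mathbb{P}(c,2r,3r)$ carries the monomials $z^6$ (degree $6r$), $y^3$ (degree $6r$) and, since $\gcd(c,2r,3r)=\gcd(c,r)$ divides $d=6r$, the mixed monomial $x^{6r/c}$ exists when $c\mid 6r$ — so one checks Remark~\ref{irreducible remark}(3.ii) or (3.i) applies; the other two rows are handled the same way by exhibiting the appropriate pure and mixed monomials of degree $d$. I expect the main obstacle to be precisely this last point: verifying condition (3) cleanly in the generality of arbitrary $a,b,c,k$ (subject only to the stated coprimality and the implicit divisibility constraints coming from well-formedness and quasismoothness of $X$), since one must be careful that $\mathbb{P}(b_k,b_4,b_5)$ need not be well-formed and that enough monomials of degree $d$ genuinely exist — but the structure of the weights ($2r$ and $3r$, or $3k$ and $r$, or $2k$ and $r$) is rigid enough that the pure powers $y^\bullet$, $z^\bullet$ together with a suitable monomial in $x$ should always do the job, so this will be a finite check rather than a genuine difficulty. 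Once (1)--(4) are in hand, Theorem~\ref{nefness} gives $K_Y$ nef and $\nu(Y)\geq 2$, and combined with $K_Y^3=0$ we conclude $\nu(Y)=2$.
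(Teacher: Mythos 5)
Your overall route coincides with the paper's: check conditions (1)--(4) of Theorem~\ref{nefness} with $k=3$, using $b_1=e_1$, $b_2=e_2$ and Remark~\ref{nefrk}, then compute $K_Y^3=0$ from Proposition~\ref{wb} to conclude $\nu(Y)=2$; and your treatment of the first row is correct. But for the second and third rows there is a genuine error: you re-read the modulus of the singularity at $Q$ as $b_3$ (namely $r+k$, resp. $2r+k$), whereas the hypothesis of the lemma fixes it to be the $r$ appearing in Table~\ref{tab kod 2 possible}, i.e. $b_5=r$. Indeed $Q$ lies on the line $(x_1=x_2=x_3=0)$, which misses the vertex of weight $b_3$ (there $x_3\neq 0$), and in Tables~\ref{tab kod 2 3r+3}--\ref{tab kod 2 4r+6} the B-weight is always of the form $\frac{1}{r}(\cdot)$. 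Your arithmetic also slips: in row two $\alpha=3r+3k-(a+b+(r+k)+3k+r)=r-(a+b+k)$, not $(r+k)-(a+b+k)$, and in row three likewise $\alpha=r-(a+b+k)$, not $(2r+k)-(a+b+k)$. Under your identification the key identities fail: for instance in row two $\frac{d\alpha^3}{\prod_i b_i}=\frac{\alpha^3}{abkr}$ while $\frac{\alpha^3}{r'e_1e_2e_3}=\frac{\alpha^3}{abk(r+k)}$, so the claimed cancellation $K_Y^3=0$ is false, and condition (1) (which, given $b_1=e_1$, $b_2=e_2$, amounts to $\alpha\geq r'-e_1-e_2-e_3$) would fail as well, since $r-(a+b+k)<(r+k)-(a+b+k)$.

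The repair is exactly the paper's computation, with the modulus kept equal to $r$: in all three rows $\alpha=r-e_1-e_2-e_3$, and $dre_3=b_3b_4b_5$ holds identically ($6r\cdot r\cdot c=c\cdot 2r\cdot 3r$, $(3r+3k)rk=(r+k)\cdot 3k\cdot r$, $(4r+2k)rk=(2r+k)\cdot 2k\cdot r$), which gives conditions (1) and (2) at once and, combined with $b_1b_2=e_1e_2$, yields
\begin{align*}
K_Y^3=\frac{d\alpha^3}{\prod_{i=1}^5 b_i}-\frac{(r-e_1-e_2-e_3)^3}{re_1e_2e_3}=0 ,
\end{align*}
hence $\nu(Y)=2$ once nefness is known. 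Conditions (3) and (4) are handled as you indicate (pairwise coprimality of the $e_i$ gives (4); Lemma~\ref{irreducible} with Remark~\ref{irreducible remark} gives (3)), modulo the minor slip that on $\mathbb{P}(c,2r,3r)$ the pure monomials of degree $6r$ are $y^3$ and $z^2$, not $z^6$.
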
 
\begin{proof}
We check that conditions in Theorem~\ref{nefness} hold for $k=3$.
Condition~(1) in Theorem~\ref{nefness} holds since $\alpha=d-\sum_{i=1}^{5}b_i=r-e_1-e_2-e_3$, $b_1=e_1$ and $b_2=e_2$.
Condition~(2) in Theorem~\ref{nefness} holds since $dre_3=b_3b_4b_5$.
One may check that condition~(3) in Theorem~\ref{nefness} holds by Lemma~\ref{irreducible} and Remark~\ref{irreducible remark}.
Condition~(4) in Theorem~\ref{nefness} holds since $e_1$, $e_2$, $e_3$ are coprime to each other.
Hence $K_Y$ is nef and $\nu(Y)\geq 2$ by Theorem~\ref{nefness}.
Note that by Proposition~\ref{wb},
$$
K_Y^3=\frac{d\alpha^3}{b_1b_2b_3b_4b_5}-\frac{(r-e_1-e_2-e_3)^3}{re_1e_2e_3}=0.
$$
Hence $\nu(Y)= 2$.
\end{proof}

In particular, we have the following examples.

\begin{thm}\label{new examples kod 2 summary}
There are examples of infinite series of families of minimal $3$-folds of Kodaira dimension $2$ by Construction~\ref{construction process} and Lemma~\ref{construction kod 2} as the following:
\begin{enumerate}
 \item examples obtained from $X_{6r}\subset \bP(a,b,c,2r,3r)$ with $a,b,c \leq 6$ are listed in Table~\ref{tab kod 2 6r};
 \item examples obtained from $X_{3r+3}\subset \bP(a,b,r+1,3,r)$ with $a,b\leq 7$ are listed in Table~\ref{tab kod 2 3r+3};
 \item examples obtained from $X_{3r+6}\subset \bP(a,b,r+2,6,r)$ with $a,b \leq 5$ are listed in Table~\ref{tab kod 2 3r+6};
 \item examples obtained from $X_{4r+2}\subset \bP(a,b,2r+1,2,r)$ with $a,b \leq 9$ are listed in Table~\ref{tab kod 2 4r+2};
 \item examples obtained from $X_{4r+4}\subset \bP(a,b,2r+2,4,r)$ with $a,b \leq 5$ are listed in Table~\ref{tab kod 2 4r+4};
 \item examples obtained from $X_{4r+6}\subset \bP(a,b,2r+3,6,r)$ with $a,b \leq 5$ are listed in Table~\ref{tab kod 2 4r+6}.
\end{enumerate}
Here we consider general hypersurfaces with only isolated singularities.
Several explicit examples with invariants computed are listed in Table~\ref{tableB}, Appendix~\ref{appendix}.
\end{thm}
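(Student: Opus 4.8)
The plan is to verify, in each of the six infinite families, that the hypothesis of Lemma~\ref{construction kod 2} (equivalently, of Theorem~\ref{nefness} with $k=3$) holds, and then to argue that the generic member has only isolated singularities; the numerical Kodaira dimension statement $\nu(Y)=2$ is then immediate from Lemma~\ref{construction kod 2} and the volume computation $K_Y^3=0$ given there. First I would observe that Table~\ref{tab kod 2 possible} was assembled precisely so that the three arithmetic identities in the proof of Lemma~\ref{construction kod 2} hold: in the three template rows one has $d-\sum b_i=r-e_1-e_2-e_3$, $b_1=e_1=a$, $b_2=e_2=b$, and $dre_3=b_3b_4b_5$ (e.g.\ for the first row $d=6r$, $e_3=c$, $b_3b_4b_5=c\cdot 2r\cdot 3r=6r^2 c=dre_3$; for the second row $d=3r+3k$, $e_3=k$, $b_3b_4b_5=(r+k)\cdot 3k\cdot r=3r^2k+3rk^2=dre_3$; for the third row $d=4r+2k$, $e_3=k$, $b_3b_4b_5=(2r+k)\cdot 2k\cdot r=4r^2k+2rk^2=dre_3$). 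The six concrete families in the statement are the specializations $k=1$ and $k=2$ (giving $3k\in\{3,6\}$ and $2k\in\{2,4,6\}$) of rows two and three, together with row one, under the stated bounds on $a,b,c$ or $a,b$. So the content of the theorem is: (i) for infinitely many values of the free parameter ($r$, and $c$ or the pair $(a,b)$ inside the bounded ranges), the hypersurface is well-formed and quasismooth with $Q$ a cyclic quotient singularity of the claimed type, and (ii) among such one can always pick parameters making the singularities isolated.

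Next I would organize the verification of quasismoothness and well-formedness uniformly. For each row of Table~\ref{tab kod 2 possible} the degree-$d$ monomials $x_4^2$ or $x_4 x_5$ etc.\ are visibly available (for row one: $x_5^2$ has degree $6r$, $x_4^3$ has degree $6r$; for row two: $x_5^3x_3$ has degree $3r+3k$, or one uses $x_4^{r+k}$ of degree $3k(r+k)$ — wait, one must instead exhibit $x_3 x_5^3$ and $x_4^{(3r+3k)/3k}$ only when $3k\mid 3r+3k$), so one applies the combinatorial criterion Theorem~\ref{2.7} subset by subset. Rather than do this abstractly I would fix a clean sufficient condition on $a,b$ (and $c$, $k$) — essentially that each $a,b,c$ be coprime to the relevant partners and divide $d$ minus some other weight — and then note the bounded ranges $a,b\le 7$ (resp.\ $\le 5$, $\le 9$) contain infinitely many such triples as $r$ varies, because the divisibility/coprimality conditions are congruence conditions on $r$ modulo fixed numbers, hence satisfied along arithmetic progressions. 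Well-formedness (Definition~\ref{wellform}) is a finite pairwise-gcd check that I would subsume into the same congruence bookkeeping. The type of $Q$: $x_1,x_2,x_3$ are local coordinates at $P_5$ (the vertex $(0:0:0:0:1)$) only after restricting the hypersurface equation, so I would note that $b_5=r$, that some monomial $x_3^a x_5$-type term of degree $d$ is present to ensure $P_5\in X$ is exactly of type $\frac1r(b_1,b_2,b_3)=\frac1r(e_1,e_2,e_3)$ via Proposition~\ref{non iso can}(1.ii), and that the hypersurface equation lets us eliminate $x_4$ locally, confirming the three coordinates and weights claimed.

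For isolatedness I would invoke Proposition~\ref{non iso can}: the non-isolated locus of a well-formed quasismooth $3$-fold weighted hypersurface sits over edges $\overline{P_iP_j}$ with $e=\gcd(b_i,b_j)\nmid d$; so it suffices to choose parameters for which $\gcd(b_i,b_j)\mid d$ for every pair, i.e.\ no edge lies inside $X$. Since $b_4\in\{2r,3r,3k,2k\}$ and $b_5\in\{3r,r\}$ share a large common factor with each other and with $b_3\in\{r+k,2r+k,c\}$ or with each other, these are again congruence conditions on $r$ (and on $c,k$), compatible with the quasismoothness congruences, so infinitely many admissible parameters remain. Then by Lemma~\ref{construction kod 2} $K_Y$ is nef with $K_Y^3=0$ and $\nu(Y)\ge 2$, hence $\nu(Y)=2$; running Construction~\ref{construction process} (terminalization via Lemma~\ref{can sing res}) produces a genuine minimal $3$-fold, and $\kappa=\nu=2$ by abundance in dimension three, so the Kodaira dimension is $2$. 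I expect the main obstacle to be purely organizational rather than conceptual: making the simultaneous congruence/coprimality conditions (quasismoothness across all subsets $I$ in Theorem~\ref{2.7}, well-formedness, $Q$ of the right type, and isolatedness) explicitly consistent and showing their common solution set is infinite in each of the six families — in practice this is handled by the tables (Tables~\ref{tab kod 2 6r}--\ref{tab kod 2 4r+6}), each row of which records a specific congruence class of $r$ together with the resulting invariants, so that the infinitude is witnessed family by family.
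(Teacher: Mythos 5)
Your overall strategy matches the paper's: reduce each family to the hypotheses of Lemma~\ref{construction kod 2} (the three arithmetic identities you check are exactly the ones used there), and then impose congruence conditions on $r$ to guarantee well-formedness, quasismoothness, the correct type of $Q$, and isolatedness of singularities. However, there is a genuine gap: applying Construction~\ref{construction process} requires more than nefness of $K_Y$. You never verify Step~1 of the construction, namely that $Q$ is the \emph{unique non-canonical} singularity of $X$ (all the other cyclic quotient points of $X$, of types $\frac{1}{a_i}(a_k,a_l,a_m)$ etc.\ from Proposition~\ref{non iso can}, must be checked to be canonical -- this does not come for free), nor Step~3, namely that the singularities created on the exceptional divisor by the weighted blow-up, of types $\frac{1}{e_i}(-e_1,\dots,r,\dots,-e_n)$ as in Proposition~\ref{wb}(3), are canonical. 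Without these two checks there is no crepant terminalization in the sense of Lemma~\ref{can sing res}, so you cannot conclude that the output is a minimal $3$-fold, and the identification $\kappa=\nu=2$ via abundance also breaks down. These are precisely the checks where many of the restrictions on $r$ recorded in Tables~\ref{tab kod 2 6r}--\ref{tab kod 2 4r+6} originate: for instance, in the family $X_{3r+3}\subset\bP(5,7,r+1,3,r)$ the blow-up replaces $Q$ by points of types $\frac15(r,3,4)$ and $\frac17(2,r,6)$, which fail to be canonical when $\modr(r,5)=4$ or $\modr(r,7)\in\{2,3\}$, so those residues must be excluded even though quasismoothness and isolatedness allow them. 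Your congruence bookkeeping, as described, only tracks quasismoothness, well-formedness, the type of $Q$, and isolatedness, so it would certify parameters for which the construction actually fails.

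A smaller inaccuracy: you locate $Q$ at the vertex $P_5$ with $b_5=r$, which is correct for families (2)--(6) (where the type is read off via Proposition~\ref{non iso can}(1.ii) after eliminating $x_4$), but not for family (1), $X_{6r}\subset\bP(a,b,c,2r,3r)$, where $b_4=2r$, $b_5=3r$, and $Q$ is the single point of the edge $P_4P_5\cap X$ (case (2.i) with $e=\gcd(2r,3r)=r\mid 6r$), of type $\frac1r(a,b,c)$. This is easily repaired but should be treated separately from the vertex case.
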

 
 \begin{proof}
 For a hypersurface $X_d\subset \mathbb{P}(b_1, b_2, b_3, b_4, b_5)$ listed in Lemma~\ref{construction kod 2}, to apply Construction~\ref{construction process}, it suffices to check that 
 \begin{enumerate}
 \item $X$ is well-formed and quasismooth, 
 \item $X$ has only one non-canonical singularity $Q$, and 
 \item after a weighted blow-up at $Q$, $\widetilde{X}$ has only canonical singularities.
 \end{enumerate}
 These can be verified manually as long as we put certain conditions on $r$. Here we only illustrate the procedure by 2 examples, while other series can be verified in a similar way. Here $\modr(r,k)$ means the smallest non-negative residue of $r$ modulo $k$.
 
Consider the general hypersurface $X=X_{6r}\subset \mathbb{P}(3,4,5,2r,3r)$ (Table~\ref{tab kod 2 6r}, No.~14). Note that $X$ is well-formed if and only if $\gcd(r, 2)=\gcd(r, 3)=\gcd(r,5)=1$.
Also $X$ is quasismooth if and only if $5$ divides one of $\{6r-3,6r-4,r\}$. Therefore, $X$ is both well-formed and quasismooth if and only if $\gcd(r, 2)=\gcd(r, 3)=1$, $\modr(r, 5)\in \{3,4\}$.
On the other hand, $\alpha=r-12$. So a necessary condition for $X$ satisfying Construction~\ref{construction process} is that $r>12$, $\gcd(r, 2)=\gcd(r, 3)=1$, $\modr(r, 5)\in \{3,4\}$. Then we can show that this is actually a sufficient condition by computing singularities. In fact, when $\modr(r, 5)=3$, 
$$
\text{\rm Sing}(X)=\{2\times \frac{1}{3}(1,2,2r), \frac{1}{4}(3,1,3r), \frac{1}{5}(4,1,4), \frac{1}{2}(1,1,1), Q=\frac{1}{r}(3,4,5)\}
$$
which has only one non-canonical singularity $Q$, and after a weighted blow-up at $Q$ with weight $(3,4,5)$, $Q$ is replaced by $3$ terminal cyclic singularities of types
$$
 \{\frac{1}{3}(r, 2, 1), \frac{1}{4}(1, r, 3), \frac{1}{5}(2,1, 3)\}.
$$
Similarly, when $\modr(r, 5)=4$, 
$$
\text{\rm Sing}(X)=\{2\times \frac{1}{3}(1,2,2r),\frac{1}{4}(3,1,3r), \frac{1}{5}(3,3,2), \frac{1}{2}(1,1,1), Q=\frac{1}{r}(3,4,5)\}
$$
which has only one non-canonical singularity $Q$, and after a weighted blow-up at $Q$ with weight $(3,4,5)$, $Q$ is replaced by $3$ terminal cyclic singularities of types
$$
 \{\frac{1}{3}(r, 2, 1), \frac{1}{4}(1, r, 3), \frac{1}{5}(2,1, 4)\}.
$$

Consider another general hypersurface $X=X_{3r+3}\subset \mathbb{P}(5,7,r+1,3,r)$ (Table~\ref{tab kod 2 3r+3}, No.~15). 
Note that $X$ is always well-formed. Also $X$ is quasismooth if and only if $5$ divides one of $\{r+1, 3r-4, r, 2r+3\}$ and $7$ divides one of $\{3r-2, r+1, r, 2r+3\}$. Therefore, $X$ is both well-formed and quasismooth if and only if $\modr(r, 5)\in \{0, 1, 3,4\}$ and $\modr(r, 7)\in \{0, 2, 3, 6\}$. As we require that $X$ has only isolated singularities, we have $\modr(r, 5)\neq 0$ and $\modr(r, 7)\neq 0$.
Note that $X$ has a non-canonical singularity $Q=\frac{1}{r}(5,7,1)$ and after a weighted blow-up at $Q$ with weight $(5,7,1)$, $Q$ is replaced by $2$ cyclic singularities of types
$$
 \{\frac{1}{5}(r, 3, 4), \frac{1}{7}(2, r, 6)\}.
$$
These two singularities are not both canonical if $\modr(r, 5)=4$ or $\modr(r, 7)\in \{2, 3\}$. On the other hand, $\alpha=r-13$. So a necessary condition for $X$ satisfying Construction~\ref{construction process} is that $r>13$, $\modr(r,5)\in\{1,3\}$, $\modr(r,7)=6$. Then we can show that this is actually a sufficient condition by computing singularities. In fact, when $\modr(r, 5)=1$, 
$$
\text{\rm Sing}(X)=\{\frac{1}{5}(2,2,3), \frac{1}{7}(5,3,6), Q=\frac{1}{r}(5,7,1)\}
$$
which has only one non-canonical singularity $Q$, and after a weighted blow-up at $Q$ with weight $(5,7,1)$, $Q$ is replaced by $2$ canonical cyclic singularities of types
$$
 \{\frac{1}{5}(1, 3, 4), \frac{1}{7}(2, 6, 6)\}.
$$
Similarly, when $\modr(r, 5)=3$, 
$$
\text{\rm Sing}(X)=\{\frac{1}{5}(4,3,3), \frac{1}{7}(5,3,6), Q=\frac{1}{r}(5,7,1)\}
$$
which has only one non-canonical singularity $Q$, and after a weighted blow-up at $Q$ with weight $(3,4,5)$, $Q$ is replaced by $2$ canonical cyclic singularities of types
$$
 \{\frac{1}{5}(3, 3, 4), \frac{1}{7}(2, 6, 6)\}.
$$
 \end{proof}
 
The description of the contents of the following tables is as follows. Each row contains a well-formed quasismooth hypersurface 
	 $X=X_d\subset \mathbb{P}(b_1,b_2,...,a_5).$
	 The columns of each table contain the following information: 
\begin{center}
	\begin{tabular}{r p{10cm}}
		\hline
		$\alpha$:& The amplitude of $X$, i.e., $d-\sum a_i$;\\
		$\deg$:& The degree of $X$, which is $d$;\\
		weight:& $(b_1,b_2,b_3,b_4,b_5)$;\\
		B-weight:& $\frac{1}{r}(e_1, e_2, e_3)$, the unique non-canonical singularity to be blown up by applying Theorem~\ref{nefness};\\
		conditions: & Restrictions on $r$, where $\modr(r,k)$ means the smallest non-negative residue of $r$ modulo $k$.\\		
		\hline
	\end{tabular}
\end{center}


{\tiny
\begin{longtable}{|l|l|l|l|l|p{5cm}|}
\caption{Type $X_{6r}\subset \bP(a,b,c,2r,3r)$}\label{tab kod 2 6r}\\
		\hline
		No.&$\alpha$ & deg & weight & B-weight & conditions \\ \hline
\endfirsthead
\multicolumn{5}{l}{{ {\bf \tablename\ \thetable{}} \textrm{-- continued from previous page}}} \\
\hline 
No.&$\alpha$ & deg & weight & B-weight & conditions \\ \hline
\endhead

 \multicolumn{4}{l}{{\textrm{Continued on next page}}} \\ \hline
\endfoot
\hline 
\endlastfoot

1& $r-3$ & $6r$ & $(1,1,1,2r,3r)$ & $\frac{1}{r}(1,1,1)$& $r>3 $\\ \hline 
2& $r-4$ & $6r$ & $(1,1,2,2r,3r)$ & $\frac{1}{r}(1,1,2)$& $r> 4$, $\modr(r,2)\neq 0$\\ \hline
3& $r-5$ & $6r$ & $(1,1,3,2r,3r)$ & $\frac{1}{r}(1,1,3)$& $r> 5$, $\modr(r,3)\neq 0$\\ \hline
4& $r-6$ & $6r$ & $(1,1,4,2r,3r)$ & $\frac{1}{r}(1,1,4)$& $r> 6$, $\modr(r,4)=1$\\ \hline
5& $r-7$ & $6r$ & $(1,1,5,2r,3r)$ & $\frac{1}{r}(1,1,5)$& $r> 7$, $\modr(r,5)=1$\\ \hline
6& $r-8$ & $6r$ & $(1,1,6,2r,3r)$ & $\frac{1}{r}(1,1,6)$& $r> 8$, $\modr(r,2)\neq 0$, $\modr(r,3)\neq 0$\\ \hline
7& $r-6$ & $6r$ & $(1,2,3,2r,3r)$ & $\frac{1}{r}(1,2,3)$& $r> 6$, $\modr(r,2)\neq 0$, $\modr(r,3)\neq 0$\\ \hline
8& $r-8$ & $6r$ & $(1,2,5,2r,3r)$ & $\frac{1}{r}(1,2,5)$& $r> 8$, $\modr(r,5)\in \{1, 2\}$\\ \hline
9& $r-8$ & $6r$ & $(1,3,4,2r,3r)$ & $\frac{1}{r}(1,3,4)$& $r> 8$, $\modr(r,3)\neq 0$, $\modr(r,4)\neq 0$\\ \hline
10& $r-9$ & $6r$ & $(1,3,5,2r,3r)$ & $\frac{1}{r}(1,3,5)$& $r> 9$, $\modr(r,3)\neq 0$, $\modr(r,5)\in \{1, 3\}$\\ \hline
11& $r-10$ & $6r$ & $(1,4,5,2r,3r)$ & $\frac{1}{r}(1,4,5)$& $r> 10$, $\modr(r,4)=1$, $\modr(r,5)\in \{1, 4\}$\\ \hline
12& $r-12$ & $6r$ & $(1,5,6,2r,3r)$ & $\frac{1}{r}(1,5,6)$& $r> 12$, $\modr(r,2)\neq 0$, $\modr(r,3)\neq 0$, $\modr(r,5)=1$\\ \hline
13& $r-10$ & $6r$ & $(2,3,5,2r,3r)$ & $\frac{1}{r}(2,3,5)$& $r> 10$, $\modr(r,2)\neq 0$, $\modr(r,3)\neq 0$, $\modr(r,5)\in\{2,3\}$\\ \hline
14& $r-12$ & $6r$ & $(3,4,5,2r,3r)$ & $\frac{1}{r}(3,4,5)$& $r> 12$, $\modr(r,2)\neq 0$, $\modr(r,3)\neq 0$, $\modr(r,5)\in\{3, 4\}$\\ 
\end{longtable}
}

\medskip

{\tiny
\begin{longtable}{|l|l|l|l|l|p{4.5cm}|}
\caption{Type $X_{3r+3}\subset \bP(a,b,r+1,3,r)$}\label{tab kod 2 3r+3}\\
		\hline
		No.&$\alpha$ & deg & weight & B-weight & conditions \\ \hline
\endfirsthead
\multicolumn{5}{l}{{ {\bf \tablename\ \thetable{}} \textrm{-- continued from previous page}}} \\
\hline 
No.&$\alpha$ & deg & weight & B-weight & conditions \\ \hline
\endhead

 \multicolumn{4}{l}{{\textrm{Continued on next page}}} \\ \hline
\endfoot
\hline 
\endlastfoot
		
		1& $r-3$ & $3r+3$ & $(1,1,r+1,3,r)$ & $\frac{1}{r}(1,1,1)$& $r>3 $\\ \hline 	
		2& $r-4$ & $3r+3$ & $(1,2,r+1,3,r)$ & $\frac{1}{r}(1,2,1)$& $r>4 $, $\modr(r,2)\neq 0$\\ \hline 	
		3& $r-5$ & $3r+3$ & $(1,3,r+1,3,r)$ & $\frac{1}{r}(1,3,1)$& $r>5 $, $\modr(r,3)=1$\\ \hline 	
		4& $r-6$ & $3r+3$ & $(1,4,r+1,3,r)$ & $\frac{1}{r}(1,4,1)$& $r>6 $, $\modr(r,4)=1$\\ \hline 	
		5& $r-7$ & $3r+3$ & $(1,5,r+1,3,r)$ & $\frac{1}{r}(1,5,1)$& $r>7 $, $\modr(r,5)=1$\\ \hline 	
		6& $r-8$ & $3r+3$ & $(1,6,r+1,3,r)$ & $\frac{1}{r}(1,6,1)$& $r>8 $, $\modr(r,6)=1$\\ \hline 	
		7& $r-9$ & $3r+3$ & $(1,7,r+1,3,r)$ & $\frac{1}{r}(1,7,1)$& $r>9 $, $\modr(r,7)=2$\\ \hline 	
		8& $r-6$ & $3r+3$ & $(2,3,r+1,3,r)$ & $\frac{1}{r}(2,3,1)$& $r>6 $, $\modr(r,2)\neq 0$, $\modr(r,3)=1$\\ \hline 	
		9& $r-8$ & $3r+3$ & $(2,5,r+1,3,r)$ & $\frac{1}{r}(2,5,1)$& $r>8 $, $\modr(r,2)\neq 0$, $\modr(r,5)\in \{1,3\}$\\ \hline 	
		10& $r-8$ & $3r+3$ & $(3,4,r+1,3,r)$ & $\frac{1}{r}(3,4,1)$& $r>8 $, $\modr(r,3)=1$, $\modr(r,4)=1$\\ \hline 	
		11& $r-9$ & $3r+3$ & $(3,5,r+1,3,r)$ & $\frac{1}{r}(3,5,1)$& $r>9 $, $\modr(r,3)=1$, $\modr(r,5)\in \{1,4\}$\\ \hline 	
		12& $r-10$ & $3r+3$ & $(4,5,r+1,3,r)$ & $\frac{1}{r}(4,5,1)$& $r>10 $, $\modr(r,4)=1$, $\modr(r,5)\in \{1,2\}$\\ \hline 	
		13& $r-12$ & $3r+3$ & $(4,7,r+1,3,r)$ & $\frac{1}{r}(4,7,1)$& $r>12 $, $\modr(r,4)=1$, $\modr(r,7)=5$\\ \hline 	
		14& $r-12$ & $3r+3$ & $(5,6,r+1,3,r)$ & $\frac{1}{r}(5,6,1)$& $r>12 $, $\modr(r,5)=1$, $\modr(r,6)=1$\\ \hline 	
		15& $r-13$ & $3r+3$ & $(5,7,r+1,3,r)$ & $\frac{1}{r}(5,7,1)$& $r>13 $, $\modr(r,5)\in\{1,3\}$, $\modr(r,7)=6$\\ 	
\end{longtable}
}
\medskip

{\tiny
\begin{longtable}{|l|l|l|l|l|p{4.5cm}|}
\caption{Type $X_{3r+6}\subset \bP(a,b,r+2,6,r)$}\label{tab kod 2 3r+6}\\
		\hline
		No.&$\alpha$ & deg & weight & B-weight & conditions \\ \hline
\endfirsthead
\multicolumn{5}{l}{{ {\bf \tablename\ \thetable{}} \textrm{-- continued from previous page}}} \\
\hline 
No.&$\alpha$ & deg & weight & B-weight & conditions \\ \hline
\endhead

 \multicolumn{4}{l}{{\textrm{Continued on next page}}} \\ \hline
\endfoot
\hline 
\endlastfoot

		1& $r-4$ & $3r+6$ & $(1,1,r+2,6,r)$ & $\frac{1}{r}(1,1,2)$& $r>4 $, $\modr(r,6)=3$\\ \hline 	
		2& $r-6$ & $3r+6$ & $(1,3,r+2,6,r)$ & $\frac{1}{r}(1,3,2)$& $r>6 $, $\modr(r,6)= 5$\\ \hline 	
		3& $r-8$ & $3r+6$ & $(1,5,r+2,6,r)$ & $\frac{1}{r}(1,5,2)$& $r>8 $, $\modr(r,5)\in \{2,3\}$, $\modr(r,6)\in \{1,3\}$\\ \hline 	
		4& $r-10$ & $3r+6$ & $(3,5,r+2,6,r)$ & $\frac{1}{r}(3,5,2)$& $r>10 $, $\modr(r,5)\in \{2,4\}$, $\modr(r,6)=5$\\ 	
\end{longtable}
}
\medskip

{\tiny
\begin{longtable}{|l|l|l|l|l|p{4.5cm}|}
\caption{Type $X_{4r+2}\subset \bP(a,b,2r+1,2,r)$}\label{tab kod 2 4r+2}\\
		\hline
		No.&$\alpha$ & deg & weight & B-weight & conditions \\ \hline
\endfirsthead
\multicolumn{5}{l}{{ {\bf \tablename\ \thetable{}} \textrm{-- continued from previous page}}} \\
\hline 
No.&$\alpha$ & deg & weight & B-weight & conditions \\ \hline
\endhead

 \multicolumn{4}{l}{{\textrm{Continued on next page}}} \\ \hline
\endfoot
\hline 
\endlastfoot

		1& $r-3$ & $4r+2$ & $(1,1,2r+1,2,r)$ & $\frac{1}{r}(1,1,1)$& $r>3 $\\ \hline 
		2& $r-4$ & $4r+2$ & $(1,2,2r+1,2,r)$ & $\frac{1}{r}(1,2,1)$& $r>4 $, $\modr(r,2)\neq 0$\\ \hline 	
		3& $r-5$ & $4r+2$ & $(1,3,2r+1,2,r)$ & $\frac{1}{r}(1,3,1)$& $r>5 $, $\modr(r,3)\neq 0$\\ \hline 	
		4& $r-6$ & $4r+2$ & $(1,4,2r+1,2,r)$ & $\frac{1}{r}(1,4,1)$& $r>6 $, $\modr(r,2)\neq 0$\\ \hline 	
		5& $r-7$ & $4r+2$ & $(1,5,2r+1,2,r)$ & $\frac{1}{r}(1,5,1)$& $r>7 $, $\modr(r,5)\in \{1,2\}$\\ \hline 	
		6& $r-8$ & $4r+2$ & $(1,6,2r+1,2,r)$ & $\frac{1}{r}(1,6,1)$& $r>8 $, $\modr(r,6)=1$\\ \hline 	
		7& $r-11$ & $4r+2$ & $(1,9,2r+1,2,r)$ & $\frac{1}{r}(1,9,1)$& $r>11 $, $\modr(r,9)=2$\\ \hline 	
		8& $r-6$ & $4r+2$ & $(2,3,2r+1,2,r)$ & $\frac{1}{r}(2,3,1)$& $r>6 $, $\modr(r,2)\neq 0$, $\modr(r,3)=1$\\ \hline 
		9& $r-8$ & $4r+2$ & $(2,5,2r+1,2,r)$ & $\frac{1}{r}(2,5,1)$& $r>8 $, $\modr(r,2)\neq 0$, $\modr(r,5)=1$\\ \hline 
		10& $r-10$ & $4r+2$ & $(2,7,2r+1,2,r)$ & $\frac{1}{r}(2,7,1)$& $r>10 $, $\modr(r,2)\neq 0$, $\modr(r,7)=3$\\ \hline 
		11& $r-8$ & $4r+2$ & $(3,4,2r+1,2,r)$ & $\frac{1}{r}(3,4,1)$& $r>8 $, $\modr(r,3)\neq 0$, $\modr(r,4)=1$\\ \hline 
		12& $r-9$ & $4r+2$ & $(3,5,2r+1,2,r)$ & $\frac{1}{r}(3,5,1)$& $r>9 $, $\modr(r,3)= 1$, $\modr(r,5)\in \{1,4\}$\\ \hline 
		13& $r-11$ & $4r+2$ & $(3,7,2r+1,2,r)$ & $\frac{1}{r}(3,7,1)$& $r>11 $, $\modr(r,3)\neq 0$, $\modr(r,7)=4$\\ \hline 
		14& $r-10$ & $4r+2$ & $(4,5,2r+1,2,r)$ & $\frac{1}{r}(4,5,1)$& $r>10 $, $\modr(r,5)\in \{1,3\}$\\ \hline 
		15& $r-14$ & $4r+2$ & $(4,9,2r+1,2,r)$ & $\frac{1}{r}(4,9,1)$& $r>14 $, $\modr(r,9)=5$\\ \hline 
		16& $r-12$ & $4r+2$ & $(5,6,2r+1,2,r)$ & $\frac{1}{r}(5,6,1)$& $r>12 $, $\modr(r,5)\in \{1,2\}$, $\modr(r,6)=1$\\ \hline 
		17& $r-13$ & $4r+2$ & $(5,7,2r+1,2,r)$ & $\frac{1}{r}(5,7,1)$& $r>13 $, $\modr(r,5)=1$, $\modr(r,7)=6$\\ \hline 
		18& $r-17$ & $4r+2$ & $(7,9,2r+1,2,r)$ & $\frac{1}{r}(7,9,1)$& $r>17 $, $\modr(r,7)=3$, $\modr(r,9)=8$ \\ 
\end{longtable}
}

\medskip

{\tiny
\begin{longtable}{|l|l|l|l|l|p{4.5cm}|}
\caption{Type $X_{4r+4}\subset \bP(a,b,2r+2,4,r)$}\label{tab kod 2 4r+4}\\
		\hline
		No.&$\alpha$ & deg & weight & B-weight & conditions \\ \hline
\endfirsthead
\multicolumn{5}{l}{{ {\bf \tablename\ \thetable{}} \textrm{-- continued from previous page}}} \\
\hline 
No.&$\alpha$ & deg & weight & B-weight & conditions \\ \hline
\endhead

 \multicolumn{4}{l}{{\textrm{Continued on next page}}} \\ \hline
\endfoot
\hline 
\endlastfoot

		1& $r-3$ & $4r+4$ & $(1,1,2r+2,4,r)$ & $\frac{1}{r}(1,1,2)$& $r>4 $, $\modr(r,4)=3$\\ \hline 
		2& $r-6$ & $4r+4$ & $(1,3,2r+2,4,r)$ & $\frac{1}{r}(1,3,2)$& $r>6 $, $\modr(r,2)\neq 0$, $\modr(r,3)=2$\\ \hline 
		3& $r-8$ & $4r+4$ & $(1,5,2r+2,4,r)$ & $\frac{1}{r}(1,5,2)$& $r>8 $, $\modr(r,4)=3$, $\modr(r,5)=2$\\ \hline 
		4& $r-10$ & $4r+4$ & $(3,5,2r+2,4,r)$ & $\frac{1}{r}(3,5,2)$& $r>10 $, $\modr(r,2)\neq 0$, $\modr(r,3)\neq 0$, $\modr(r,5)\in \{1,2\}$\\ 

\end{longtable}
}

\medskip

{\tiny
\begin{longtable}{|l|l|l|l|l|p{4.5cm}|}
\caption{Type $X_{4r+6}\subset \bP(a,b,2r+3,6,r)$}\label{tab kod 2 4r+6}\\
		\hline
		No.&$\alpha$ & deg & weight & B-weight & conditions \\ \hline
\endfirsthead
\multicolumn{5}{l}{{ {\bf \tablename\ \thetable{}} \textrm{-- continued from previous page}}} \\
\hline 
No.&$\alpha$ & deg & weight & B-weight & conditions \\ \hline
\endhead

 \multicolumn{4}{l}{{\textrm{Continued on next page}}} \\ \hline
\endfoot
\hline 
\endlastfoot

		1& $r-5$ & $4r+6$ & $(1,1,2r+3,6,r)$ & $\frac{1}{r}(1,1,3)$& $r>5 $, $\modr(r,6)=4$\\ \hline 
		2& $r-6$ & $4r+6$ & $(1,2,2r+3,6,r)$ & $\frac{1}{r}(1,2,3)$& $r>6 $, $\modr(r,6)=5$\\ \hline 
		3& $r-8$ & $4r+6$ & $(1,4,2r+3,6,r)$ & $\frac{1}{r}(1,4,3)$& $r>8 $, $\modr(r,4)=3$, $\modr(r,6)=1$\\ \hline 
		4& $r-9$ & $4r+6$ & $(1,5,2r+3,6,r)$ & $\frac{1}{r}(1,5,3)$& $r>9 $, $\modr(r,5)=3$, $\modr(r,6)\in\{2,4\}$\\ \hline 
		5& $r-10$ & $4r+6$ & $(2,5,2r+3,6,r)$ & $\frac{1}{r}(2,5,3)$& $r>10 $, $\modr(r,5)\in \{3, 4\}$, $\modr(r,6)=5$\\ \hline 
		6& $r-12$ & $4r+6$ & $(4,5,2r+3,6,r)$ & $\frac{1}{r}(4,5,3)$& $r>12 $, $\modr(r,4)=3$, $\modr(r,5)\in \{2, 3\}$, $\modr(r,6)=1$\\ 
\end{longtable}
}

\subsection{Examples of minimal $3$-folds of general type near the Noether line}\ 

An important topic in studying the geography problem of $3$-folds of general type is the ``Noether inequality in dimension $3$'' which, except for a finite number of families, was proved by Chen, Chen, and Jiang (\cite{Noether, Noether_Add}). The open case is the following:

\begin{conj}\label{Noether} Any minimal projective $3$-fold $X$ of general type with $5\leq p_g\leq 10$ satisfies the inequality
$$K_X^3\geq \frac{4}{3}p_g(X)-\frac{10}{3}.$$
\end{conj}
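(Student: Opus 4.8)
Since this is the open range of the three-dimensional Noether inequality, I can only outline the line of attack, following the framework of Chen--Chen--Jiang \cite{Noether, Noether_Add}. I would argue by contradiction, assuming $X$ is minimal of general type with $5\le p_g(X)\le 10$ and $K_X^3<\tfrac43 p_g(X)-\tfrac{10}{3}$. First pass to a smooth model $X'\to X$, write $|K_{X'}|=|M|+Z$ with $|M|$ the moving part, and stratify by the canonical dimension $d=\mathrm{can.dim}(X)\in\{1,2,3\}$ (note $p_g\ge 5$ forces $d\ge 1$). The case $d=3$ is immediate: Kobayashi's inequality \cite{Kob} gives $K_X^3\ge 2p_g-6$, and $2p_g-6\ge\tfrac43 p_g-\tfrac{10}{3}$ already for $p_g\ge 4$, a contradiction. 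So the whole problem sits in the cases $d=2$ and $d=1$, where the canonical system is composed with a fibration onto a surface, respectively with a pencil.

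\textbf{The case $d=2$.} I would take the Stein factorization of $\Phi_{|K_{X'}|}$ to obtain a fibration $f\colon X'\to S$ onto a normal surface whose general fibre is a smooth curve $C$ with $g(C)\ge 2$, and note that $S$ dominates a surface of degree $\ge p_g-2$ in $\mathbb{P}^{p_g-1}$. Then I would decompose $M$ into an $f$-horizontal and an $f$-vertical part, bound $(K_{X'}\cdot C)$ from below by a Clifford/Xiao-type estimate in $g(C)$, bound the ``surface part'' from below using $p_g$ and the degree of the canonical image, and combine these with the inequality $(K_{X'}-M)\cdot M\cdot f^*(\mathrm{pt})\ge 0$ forced by the nefness of $K_X$. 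The naive outcome is only $K_X^3\gtrsim p_g-2$, which beats $\tfrac43 p_g-\tfrac{10}{3}$ only for $p_g\le 4$; to recover the missing $\tfrac13 p_g$ I would have to show that whenever $K_X^3$ is near the bound, either $g(C)$ is forced up or $S$ is very special (essentially a $(\mathbb{P}^2,\mathcal{O}(1))$ configuration), and in the latter case squeeze out extra sections of $|mK_X|$ for small $m$.

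\textbf{The case $d=1$.} Here $|K_{X'}|$ is composed with a pencil, so I get a fibration $f\colon X'\to B$ whose general fibre $F$ is a smooth surface of general type, and from the relevant cohomology sequence one obtains $p_g(F)\ge p_g(X)-1\ge 4$ (reducing to $g(B)=0$ in the range that matters). The surface Noether inequality then gives $K_F^2\ge 2p_g(F)-4\ge 2p_g(X)-6$, and I would feed this into the threefold slope inequality for $\omega_{X'/B}$ (the analogue of Xiao's and Ohno's bounds) to get $K_X^3\ge aK_F^2-(\text{correction})$ with $a$ close to $1$. The delicate point is the boundary case where $F$ attains the surface Noether equality and hence is a resolution of a double cover of $\mathbb{P}^2$ or of a quadric cone: I would then analyze precisely how the restriction of the canonical pencil of $X$ lies on such an $F$, in order either to rule this out or to sharpen the slope bound.

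\textbf{Where the difficulty lies.} In both $d=1$ and $d=2$ the generic estimates fall short of $\tfrac43 p_g-\tfrac{10}{3}$ by a term of order $\tfrac13 p_g$, so the real content is a sharp classification of the extremal fibrations --- small fibre genus, fibre surfaces on the surface Noether line, or special base surfaces --- together with a proof that each such configuration either cannot occur for $5\le p_g\le 10$ or already satisfies the inequality. That there is essentially no slack to exploit is shown by the examples of Section~\ref{sec 4}: the threefolds of Table~\ref{tableC}, No.~7 and No.~11 lie exactly on the line $K^3=\tfrac43 p_g-\tfrac{10}{3}$, while No.~10 lies just above it, so any successful case analysis has to be nearly optimal.
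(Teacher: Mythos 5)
This statement is labeled as a Conjecture in the paper and is explicitly presented there as the \emph{open} case of the three-dimensional Noether inequality (the cases $p_g\le 4$ and $p_g\ge 11$ being settled in \cite{Noether, Noether_Add}); the paper contains no proof of it, and your proposal does not supply one either. You say so yourself: after the reduction by canonical dimension, in the two cases that carry the content of the conjecture your estimates admittedly fall short of $\frac43 p_g-\frac{10}{3}$ by a term of order $\frac13 p_g$, and the ``sharp classification of the extremal fibrations'' that would close this gap is exactly the missing mathematics. So what you have is a (reasonable) description of the problem, not an argument. Two remarks on the outline itself. First, the case $\mathrm{can.dim}=2$ is not actually problematic: Theorem~\ref{v(n-1)} of this paper with $n=3$ gives
$$K_X^3\ \geq\ \tfrac14\big\lceil \tfrac83\big(2(p_g-2)-1\big)\big\rceil\ \geq\ \tfrac43 p_g-\tfrac{10}{3},$$
so the canonical-surface-fibration case you sketch is already covered, and the entire open content of Conjecture~\ref{Noether} sits in the pencil case $\mathrm{can.dim}=1$.

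Second, and more seriously, your treatment of that pencil case starts from a false step: when $|K_{X'}|$ is composed with a pencil $f\colon X'\to B$, it is \emph{not} true that $p_g(F)\ge p_g(X)-1$ for the general fibre $F$. The sections of $K_{X'}$ are essentially pulled back from $B$ (the moving part is $\equiv aF$ with $a\ge p_g-1$ and restricts trivially to $F$), so $p_g(F)$ can be as small as $1$ or $2$ no matter how large $p_g(X)$ is; indeed the known extremal examples --- Kobayashi's threefolds on the Noether line \cite{Kob}, the examples of \cite{C-H}, and Table~\ref{tableC}, No.~10 of this paper ($p_g=10$, canonical image a curve) --- are pencils of $(1,2)$-surfaces with $p_g(F)=2$. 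Your chain $p_g(F)\ge 4\Rightarrow K_{F_0}^2\ge 2p_g(X)-6$ therefore excludes precisely the configurations that make the problem hard, and the subsequent slope-inequality estimate is built on that exclusion. Until you have a genuine argument for pencils of $(1,2)$- (and $(1,1)$-, $(2,3)$-type) surfaces with $5\le p_g\le 10$, nothing is proved, which is consistent with the statement remaining a conjecture in the paper.
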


The effectivity of Theorem~\ref{nefness} makes it possible for us to search those concrete $3$-folds near the Noether line $K^3= \frac{4}{3}p_g-\frac{10}{3}$. We provide here several new examples in Tables~\ref{tableC} and~\ref{tableC+}. Later in Section~\ref{sec 6} we will review these examples in another perspective, from the point of view of the higher dimensional volume problem.

The description of contents of Tables~\ref{tableC} and~\ref{tableC+} are similar to that of Table~\ref{tableA}. Here the last column is the distance $\Delta$ to the Noether line, that is, $\Delta=\Vol(\widehat{X})-\frac{4}{3}p_g(\widehat{X})+\frac{10}{3}$.

 {\tiny
\begin{longtable}{|l|l|l|l|l|l|l|l|l| l |l|}
 			\caption{ Minimal $3$-folds of general type near the Noether line, I}\label{tableC}\\
		\hline
			No.&$\alpha$ & deg & weight & B-weight & $\Vol$&$P_2$ & $p_g$&$\rho$ &basket & $\Delta$ \\ \hline	
\endfirsthead
\multicolumn{5}{l}{{ {\bf \tablename\ \thetable{}} \textrm{-- continued from previous page}}} \\
\hline 
			No.&$\alpha$ & deg & weight & B-weight & $\Vol$&$P_2$ & $p_g$&$\rho$ &basket & $\Delta$ \\ \hline	
\endhead

 \multicolumn{4}{l}{{\textrm{Continued on next page}}} \\ \hline
\endfoot
\hline 
\endlastfoot

1&2&16&$(1,1,2,3,7)$&$\frac{1}{7}(1,1,3)$&$\frac{8}{3}$&11&4&2&$ 2\times(1,3) $& $\frac{2}{3}$\\ \hline
2&3&26&$(1,1,3,5,13)$&$\frac{1}{5}(2,1,1)$&$\frac{7}{2}$&14&5&3&$ (1,2) $ & $\frac{1}{6}$\\ \hline
3&4&36&$(1,1,5,7,18)$&$\frac{1}{7}(2,3,1)$&$\frac{109}{30}$&15&5&2&$ (2,5),\ (1,3),\ (1,2) $ & $\frac{3}{10}$\\ \hline
4&7&56&$(1,2,7,11,28)$&$\frac{1}{11}(4,3,1)$&$\frac{17}{4}$&15&5&9&$ (1,4),\ 2\times(1,2) $& $\frac{11}{12}$\\ \hline
5&2&13&$(1,1,1,3,5)$&$\frac{1}{5}(1,1,1)$&$\frac{16}{3}$&18&6&2&$ (1,3) $& $\frac{2}{3}$\\ \hline
6&2&15&$(1,1,1,3,7)$&$\frac{1}{7}(1,1,3)$&$\frac{16}{3}$&18&6&2&$ (1,3) $& $\frac{2}{3}$\\ \hline
7&5&40&$(1,1,5,8,20)$&$\frac{1}{4}(1,1,1)$&$6$&21&7&6&$ $& $0$\\ \hline
8&6&50&$(1,1,7,10,25)$&$\frac{1}{5}(1,1,2)$&$\frac{85}{14}$&22&7&2&$ (2,7),\ (1,2) $& $\frac{1}{14}$\\ \hline
9&7&56&$(1,1,8,11,28)$&$\frac{1}{11}(2,5,1)$&$\frac{151}{20}$&26&8&2&$ (2,5),\ (1,2),\ (1,4) $& $\frac{13}{60}$\\ \hline
10&9&70&$(1,1,10,14,35)$&$\frac{1}{7}(1,1,3)$&$\frac{301}{30}$&33&10&2&$ (1,2),\ (1,5),\ (1,3) $& $\frac{1}{30}$\\ \hline
11&17&120&$(1,1,17,24,60)$&$\frac{1}{12}(1,1,5)$&22 &65 &19 & 12& &0 \\
\end{longtable}
}

 {\tiny
\begin{longtable}{|l|l|l|l|l|l|l|l|l| l |l|}
 			\caption{ Minimal $3$-folds of general type near the Noether line, II}\label{tableC+}\\
		\hline
		No.&$\alpha$ & deg& weight & B-weight & $\Vol$&$P_2$ & $p_g$ &basket & $\Delta$\\ \hline
\endfirsthead
\multicolumn{10}{l}{{ {\bf \tablename\ \thetable{}} \textrm{-- continued from previous page}}} \\
\hline 
		No.&$\alpha$ & deg& weight & B-weight & $\Vol$&$P_2$ & $p_g$ &basket & $\Delta$ \\ \hline
\endhead

 \multicolumn{10}{l}{{\textrm{Continued on next page}}} \\ \hline
\endfoot
\hline 
\endlastfoot

1&2&15&$(1,1,2,3,6)$&$\frac{1}{6}(1,1,2)$&$\frac{8}{3}$&11&4&$ 2\times(1,3) $ &$\frac{2}{3}$\\ \hline
2&2&17&$(1,1,2,3,8)$&$\frac{1}{8}(1,2,3)$&$\frac{8}{3}$&11&4&$ 2\times(1,3) $&$\frac{2}{3}$\\ \hline
3&2&15&$(1,1,2,2,7)$&$\frac{1}{7}(1,2,2)$&$4$&14&5& &$\frac{2}{3}$\\ 

\end{longtable}
}

 

\begin{rem}\label{remark noether line}
\begin{enumerate}
\item The minimal $3$-folds $\widehat{X}_{40}$ and $\widehat{X}_{120}$ corresponding to Table~\ref{tableC}, No.~7 and No.~11 are new examples satisfying the Noether equality. In fact, Kobayashi first constructed minimal $3$-folds satisfying the Noether equality in \cite{Kob}, and later Chen and Hu generalizes Kobayashi's construction to get more examples in \cite{C-H}. According to 
\cite[Theorem 1.1]{C-H}, the canonical image of any known example satisfying the Noether equality is a Hirzebruch surface, while the canonical images of $\widehat{X}_{40}$ and $\widehat{X}_{120}$ are $\mathbb{P}(1,1,5)$ and $\mathbb{P}(1,1,17)$ respectively. So both $\widehat{X}_{40}$ and $\widehat{X}_{120}$ are new. 

\item All $3$-folds in Table~\ref{tableC} and Table~\ref{tableC+} are very useful examples in the study of \cite[Question 1.6]{Noether}. Here the $3$-fold corresponding to Table~\ref{tableC}, No.~10, is so far the first singular (non-Gorenstein) minimal $3$-fold which is closest to the Noether line. 

\end{enumerate}
\end{rem}

All items in Table~\ref{tableC} and Table~\ref{tableC+} can be manually verified similar to previous cases.
We illustrate the explicit computations for the last example.

\begin{exmp}[Table~\ref{tableC}, No.~10]\label{ex cj3} Consider the general hypersurface 
$$X=X_{70}\subset \mathbb{P}(1,1,10,14,35)$$
which is verified to be well-formed and quasismooth. It is also clear that $\alpha = 9$, $p_g = 10$, $P_2=33$. The set of singularities of $X$ is
	$$\text{\rm Sing}(X) = \{\frac{1}{2}(1,1,1),\ \frac{1}{5}(1,1,4),\ Q=\frac{1}{7}(1,1,3)\},$$
	where the first two are terminal, and $Q$ is non-canonical.
	
	For the conditions of Theorem~\ref{nefness}, we take 
	$$(b_1,b_2,b_3,b_4,b_5) = (1,1,10,14,35),$$
	$$(e_1,e_2,e_3)=(1,1,3),$$
	$r=7$, and $k=3$. Conditions $(1),(2),(4)$ follow from direct computations (or Remark~\ref{nefrk}). Condition $(3)$ means that the general curve $C_{70}\subset\mathbb{P}(10,14,35)$ should be irreducible. Note that we can not directly use Lemma~\ref{irreducible} here because $\mathbb{P}(10,14,35)$ is not well-formed, but it is easy to see that $\mathbb{P}(10,14,35)\cong 
	\mathbb{P}(5,7,35)\cong\mathbb{P}(1,1,1)$ and $C_{70}$ is isomorphic to a line in the usual projective plane.
	
	So by Theorem~\ref{nefness}, we can take a weighted blow-up $f:\widetilde{X}\to X$ at the point $Q$ with weight $(1,1,3)$ such that $K_{\widetilde{X}}$ is nef. On the exceptional divisor $E$ there is $1$ new singularity of type $\frac{1}{3}(1,1,2),$ which is terminal.
	Hence $\widetilde{X}$ is a minimal $3$-fold. Applying the volume formula for weighted blow-ups (cf. Proposition~\ref{wb}), we get 
	$K_{\widehat{X}}^3=K_{\widetilde{X}}^3 = \frac{301}{30}$.
	Since $\rho(X)=1$ by \cite[Theorem 3.2.4(i)]{WPS}, $\rho(\widehat{X})=\rho(\widetilde{X})=2$. 
	 Finally, we collect the singularities of $\widehat{X}=\widetilde{X}$ and obtain the Reid basket
	$B_{\widehat{X}} = \{ (1,2),\ (1,5),\ (1,3)\}.$
One interesting point of this example is that the Noether distance $\Delta=\frac{1}{30}$, which is the smallest among all known examples not on the Noether line. 
\end{exmp}

\section{Canonical volumes of varieties of general type with high canonical dimensions}\label{sec 5}

The motivation of this section is to study Question~\ref{P2}. It is well-known that $v_1=2$ and $v_2=1$. By \cite{EXPI,EXPII,EXPIII} and \cite{Fle00}, we know that $\frac{1}{1680}\leq v_3\leq \frac{1}{420}$. Very little is known about $v_n$ for $n\geq 4$. This also hints that it is very important to find minimal higher dimensional varieties of general type with their canonical volumes as small as possible. 

One may dispart the difficulty of studying $v_n$ by the following strategy. For any $k=1, \cdots, n$, we define
\begin{align*}
v_{n,0}&:=\textrm{min}\{\Vol(Y)|\ Y\ \text{is a}\ n\text{-fold of general type,\ }
p_g(Y)\leq 1\}\ \text{and}\\
v_{n,k}&:=\textrm{min}\{\Vol(Y)|\ Y\ \text{is a}\ n\text{-fold of general type,\ }
\text{can.dim}(Y)=k\}. 
\end{align*}
Clearly, we have $v_n=\text{min}\{v_{n,j}|\ j=0,1,\cdots, n\}$. 

According to Kobayashi \cite{Kob}, we have $v_{n,n}=2$. In this section, we mainly study effective lower bounds for $v_{n,n-1}$ and $v_{n,n-2}$ for $n\geq 4$. 


\subsection{Convention and notation} \ 

\begin{enumerate}
\item
Given any normal projective variety $W$, we say that $W'$ is a higher model of $W$ if $W'$ is normal projective and there is a birational morphism $\sigma: W'\lrw W$. 
\item 
If $W$ is of general type, then $W$ has a minimal model $W_0$ by \cite{BCHM}. Therefore one can find a higher model $W'$ so that there is a birational morphism $\pi_{W'}:W'\lrw W_0$. Sometimes, to avoid too many symbols, we say ``{\it modulo a higher model, there is a birational morphism $\pi_W:W\lrw W_0$}'', which means that we simply replace $W$ by a possibly higher model (but still denoted by $W$). 
\item By convention, {\it an $(a,b)$-surface $S$} means a smooth projective surface of general type whose minimal model $S_0$ has invariants $(K_{S_0}^2, p_g(S_0))=(a,b)$. 
\end{enumerate}

\subsection{The case of canonical dimension $n-1$}\

\begin{thm}\label{v(n-1)} Let $Y$ be a minimal $n$-fold ($n\geq 3$) of general type with $\text{\rm can.dim}(Y)=n-1$. Then 
$$K_Y^n\geq \max\{\frac{2(p_g(Y)-n+1)}{n-1}, \frac{1}{(n-1)^2}\roundup{\frac{8}{3}\big((n-1)(p_g(Y)-n+1)-1\big)}\}.$$
In particular, one has
$$v_{n,n-1}\geq \begin{cases}
\frac{2}{n-1}, &\text{ for }3\leq n\leq 5;\\
\frac{1}{(n-1)^2}\lceil \frac{8(n-2)}{3} \rceil, &\text{ for }n\geq 6.
\end{cases}$$ 
\end{thm}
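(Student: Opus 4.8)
The plan is to reduce to a fibration structure coming from the canonical map and then run an induction on dimension, using two complementary lower bounds for the canonical volume. Since $\mathrm{can.dim}(Y)=n-1$, after passing to a higher model $\pi: Y'\to Y$ we may assume $|K_{Y'}|$ (or a sub-linear-system) gives a morphism $f: Y'\to W\subseteq \mathbb{P}^{N}$ onto an $(n-1)$-dimensional variety $W$, with connected fibers after Stein factorization; the general fiber $F$ is then a smooth curve. Pulling back a general hyperplane section of $W$ repeatedly, one builds a "canonical flag" $Y'=Y_{n-1}\supset Y_{n-2}\supset\cdots\supset Y_1\supset Y_0$ where $Y_1$ is a general such surface-to-curve intermediate stage and $Y_0$ a general fiber curve. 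The key numerical inputs are: (i) $p_g$ drops in a controlled way along the flag, so that the "effective genus" of $F$, and of the intermediate $(a,b)$-surface, can be bounded below in terms of $p_g(Y)-n+1$; and (ii) two volume estimates — a crude one of shape $K_Y^n\gtrsim \frac{2}{n-1}(p_g(Y)-n+1)$ coming from restricting $K$ to fibers and using $\deg K_F\ge 2g(F)-2$ together with the number of independent sections pulled back from $W$, and a sharper one of shape $K_Y^n\gtrsim \frac{1}{(n-1)^2}\lceil\frac83(\cdots)\rceil$ coming from a Noether-type inequality on the intermediate $(a,b)$-surface (the classical $K_S^2\ge 2p_g(S)-4$ being refined to the $\tfrac43$-slope/Horikawa-type estimate, which is where the constant $\tfrac83$ enters).

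More concretely, the first step is to set up the slicing and track how $K_Y|_{Y_{n-2}}$, $K_Y|_{Y_{n-3}}$, etc. compare with the canonical divisors of the slices; since each $Y_i$ is cut out by pullbacks of general members of a base-point-free (on $W$) system, adjunction gives $K_{Y_i}\equiv (K_Y + \text{(hyperplane pullbacks)})|_{Y_i}$, and because those hyperplane pullbacks are nef and effective one gets $K_Y^n \ge (K_Y|_{Y_1})^2 = (K_Y^{n-2}\cdot Y_1)$-type bounds, ultimately $K_Y^n \ge \mathrm{Vol}(Y_1)$ up to the combinatorial factor $(n-1)^{?}$ from how many slices were taken. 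The second step is the genus/$p_g$ bookkeeping: standard exact sequences $0\to \mathcal{O}(K-Y_i)\to \mathcal{O}(K) \to \mathcal{O}_{Y_i}(K_{Y_i}-(\text{remaining pullbacks}))\to 0$ show $p_g$ decreases by at most $1$ per slice in the relevant range, giving $p_g(Y_1)\ge p_g(Y)-(n-3)$ and the effective genus of the fiber curve $\ge p_g(Y)-(n-1)$ or thereabouts. The third step is to feed these into the two surface/curve inequalities and take the max, then specialize $p_g(Y)=v_{n,n-1}$-realizing value; the cases $3\le n\le 5$ versus $n\ge 6$ split precisely because for small $n$ the linear bound $\frac{2}{n-1}$ wins, while for $n\ge 6$ the Noether-type bound $\frac{1}{(n-1)^2}\lceil\frac{8(n-2)}{3}\rceil$ dominates (this matches the statement's case division, noting $p_g(Y)-n+1\ge 1$ since $\mathrm{can.dim}=n-1$ forces $p_g\ge n$).

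The main obstacle I expect is controlling the volume loss in the slicing step honestly — i.e., proving $K_Y^n \ge \frac{1}{(n-1)^2}\,\mathrm{Vol}(Y_1')$-type inequalities with the \emph{right} polynomial factor $(n-1)^2$ rather than something worse like $(n-1)!$. This requires that the hyperplane-pullback divisors $M_i$ on $W$ satisfy $M_1\cdots M_{n-2}\cdot F \ge$ something like $(n-1)^{?}$ bounded by a product of the $\deg W$ contributions, or more precisely using that $\deg W \ge \mathrm{codim}+1 = N-n+2$ and $N+1 \ge p_g$, combined with the fact that $K_Y \ge f^*\mathcal{O}_W(1)$ as divisors (since $|K_Y|$ maps onto $W$). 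The delicate point is that $W$ may be quite singular and of high degree, so one must argue via the birational invariance and possibly replace $W$ by a resolution, being careful that $f^*\mathcal{O}_W(1)$ is only nef, not ample, and that the "general hyperplane slice" arguments still produce irreducible reduced subvarieties of the expected dimension. A secondary subtlety is the base locus of $|K_Y|$: one should work with the movable part $M$, write $K_Y \sim M + Z$ with $Z$ effective, and carry the inequality $K_Y^n \ge K_Y^{n-1}\cdot M \ge \cdots$ through the induction, which only helps (adds effective error terms) but must be tracked so the final constants are not spoiled.
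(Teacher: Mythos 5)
Your overall skeleton is the same as the paper's: cut by $n-2$ general members of the movable part $|M|$ of $|K_{Y'}|$ down to a surface $S$, use that $|M|_S|$ is a pencil whose multiplicity $\gamma$ is at least $p_g(Y)-n+1$, and combine a linear bound (degree of the canonical class on the moving curve $C$) with an $\lceil\frac{8}{3}(\cdot)\rceil$-type bound on $K_{S_0}^2$, then take the maximum. However, there is a genuine gap at exactly the point you flag as the ``main obstacle,'' and your proposed fix does not close it. Both final inequalities require comparing $\pi_{Y'}^*(K_Y)$ restricted to the slices with the pullback of the canonical divisor of the \emph{minimal model} of each slice, namely $\pi_{Y'}^*(K_Y)|_{Y^i}\geq \frac{1}{i+1}\pi_{Y^i}^*(K_{Y^i_0})$, so that in the end $(\pi_{Y'}^*(K_Y)|_S)^2\geq \frac{1}{(n-1)^2}K_{S_0}^2$ and $(\pi_{Y'}^*(K_Y)|_S\cdot C)\geq \frac{2}{n-1}$. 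Adjunction plus nefness of the $M_i$ only gives $K_S\geq (n-1)M|_S$, i.e.\ a lower bound on $\mathrm{Vol}(S)=K_{S_0}^2$; it says nothing about $\pi_{Y'}^*(K_Y)|_S$ versus $\pi_S^*(K_{S_0})$, and indeed $K_{Y'}=\pi_{Y'}^*(K_Y)+(\text{effective exceptional})$ makes the naive adjunction comparison go in the wrong direction. The paper obtains this key step from a pluricanonical restriction (Kawamata extension type) theorem, \cite[Theorem 2.4(2)]{CJ}, applied to $|m(1+i)K_{Y'}|$ restricted to $Y^i$ versus $|mK_{Y^i}|$, identifying the movable parts with $\pi^*$ of the canonical classes of the respective minimal models. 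Your substitute — degree estimates for the canonical image $W$ ($\deg W\geq \mathrm{codim}+1$, $N+1\geq p_g$, $K_Y\geq f^*\mathcal{O}_W(1)$) — never brings in the minimal model of the slice at all, so it cannot produce the $\frac{1}{n-1}$ and $\frac{1}{(n-1)^2}$ factors; at best it reproves inequalities in terms of $M$ that you already have.

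Two smaller points. First, for the linear bound you cannot use $\deg(K_Y|_F)\geq 2g(F)-2$ directly (again the same comparison problem); the paper instead uses $(\pi_S^*(K_{S_0})\cdot C)\geq 2$ from \cite[Lemma 2.4]{EXPIII} together with the restriction inequality above, plus the observation $p_g(S)\geq n\geq 3$ to rule out $S$ being a $(1,2)$-surface. Second, the constant $\frac{8}{3}$ does not come from refining the classical surface Noether inequality; it comes from \cite[Proposition 2.9]{Noether} applied to $\pi_S^*(K_{S_0})\geq (n-1)\gamma C$ when $g(C)=2$, with a separate elementary adjunction argument handling $g(C)\geq 3$. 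So the architecture of your proposal is right, but without the pluricanonical restriction theorem (or an equivalent extension statement) the central volume-transfer step, and hence both stated inequalities, remain unproved.
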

\begin{proof} Let $\pi_{Y'}: Y'\lrw Y$ be a birational modification such that $|M|=\textrm{Mov}|K_{Y'}|$ is base point free, where $Y'$ is nonsingular and projective. 
\medskip

{\bf Step 0}. Notation and setting. 

Pick up $n-2$ different general members $M_1$, $\dots$, $M_{n-2}\in |M|$. For any $i=1,\dots, n-2$, set $Y^i=M_1\cap\dots\cap M_i$ which, by the Bertini theorem, is a smooth projective subvariety of general type of codimension $i$. We also set $Y^0=Y'$ and $S=Y^{n-2}$. We have the following chain of subvarieties:
$$Y'=Y^0\supset Y^1\supset \dots\supset Y^{n-2}=S.$$

Modulo a higher model of $Y'$, we may and do assume that, for each $i=1,\dots, n-2$, there is a birational morphism $\pi_{Y^i}:Y^i\lrw Y^i_0$ where $Y^i_0$ is a minimal model of $Y^i$. 

We have 
\begin{align}
K_Y^n{}&\geq {\pi^*_{Y'}}(K_Y)^2\cdot (M_1\cdot M_2\cdot\dots \cdot M_{n-2})\notag\\
{}&=({\pi^*_{Y'}}(K_Y)^2\cdot S)=\big(\pi^*_{Y'}(K_Y)|_S\big)^2.\label{OI}
\end{align}
It is clear that $$h^0(Y^i, M|_{Y^i})\geq h^0(Y',M)-i=p_g(Y)-i$$ for all $i>0$. The condition $\text{can.dim}(Y')=n-1$ implies that $|M|_S|$ is a free pencil of curves. Denote by $C$ an irreducible component of a general element of $|M|_S|$. 
Then 
\begin{equation}\pi_Y^*(K_Y)|_S\geq M|_S\equiv \gamma C\label{gamma}\end{equation}
where $\gamma\geq h^0(S, M|_S)-1\geq p_g(Y)-n+1\geq 1$. 

\medskip 

{\bf Step 1}. Canonical restriction inequalities. \ 

For any $i>0$, we have 
\begin{align}
(K_{Y'}+iM)|_{Y^i} & \sim (K_{Y'}+M_1+\dots+M_i)|_{Y^i}\notag\\
& \sim \big( (K_{Y'}+M_1)|_{Y^1}+(M_2+\dots+M_i)|_{Y^1}\big)|_{Y^i}\notag\\
& \sim \big(K_{Y^1}+({M_2}|_{Y^1}+\dots+{M_i}|_{Y^1})\big)|_{Y^i}\notag\\
& \sim \dots \notag\\
&\sim \big(K_{Y^{i-1}}+{M_i}|_{Y^{i-1}}\big)|_{Y^i}\notag\\
&\sim K_{Y^i}. \label{can.div.}
\end{align}

 
Picking up a sufficiently large and sufficiently divisible integer $m$, we have 
\begin{align*}
|m(1+i)K_{Y'}||_{Y^i}\lsgeq&|m(K_{Y'}+iM)||_{Y^i}\\
=&|m(K_{Y^1}+(i-1)M|_{Y^1})||_{Y^i}\\
 =& \dots\dots \\
=& |m(K_{Y^{i-1}}+M|_{Y^{i-1}})||_{Y^i}\\
=& |mK_{Y^i}|
\end{align*}
for all $i>0$, here we apply \cite[Theorem 2.4(2)]{CJ} to get all above equalities on restrictions of linear systems. 
Since $\text{Mov}|m(1+i)K_{Y'}|=|m(1+i)\pi_{Y'}^*(K_Y)|$ and 
$\text{Mov}|mK_{Y^i}|=|m\pi_{Y^i}^*(K_{Y^i_0})|$, we have 
\begin{equation}
\pi_{Y'}^*(K_Y)|_{Y^i}\geq \frac{1}{i+1}\pi_{Y^i}^*(K_{Y^i_0})\label{cri-1}
\end{equation}
for all $i=1,\dots, n-2$. In particular, 
$$
\pi_Y^*(K_Y)|_S\geq \frac{1}{n-1}\pi_S^*(K_{S_0}).
$$
\noindent{\bf Step 2}. $p_g(S)\geq 3$ and $S$ is not a $(1,2)$-surface. 

By \eqref{gamma} and \eqref{can.div.}, we have
\begin{align}
K_S =&(K_{Y'}+(n-2)M)|_S \notag\\
\geq & (n-1)M|_S\equiv (n-1)\gamma C,\label{can.S}
\end{align}
so
$$p_g(S)\geq h^0(S, (n-1)M|_S)\geq n\geq 3,$$
which means that $S$ is not a $(1,2)$-surface. Besides, since $(n-1)M|_S$ is free, Relation \eqref{can.S} implies that, at worst numerically, 
\begin{equation}\pi_S^*(K_{S_0})\geq 
(n-1)\gamma C. \label{SS}
\end{equation}

\noindent{\bf Step 3}. The first inequality. 

By \cite[Lemma 2.4]{EXPIII}, we have $(\pi_S^*(K_{S_0})\cdot C)\geq 2$. Hence
\begin{equation}K_Y^n\geq \big(\pi_{Y'}^*(K_Y)|_S\big)^2
 \geq \frac{\gamma}{n-1}(\sigma^*(K_{S_0})\cdot C)\geq \frac{2(p_g(Y)-n+1)}{n-1}. \label{ineq-1}
\end{equation}
Noting that $p_g(Y)\geq n$, we clearly have $K_Y^n\geq \frac{2}{n-1}$. 
\medskip

\noindent{\bf Step 4}. The second inequality. 

By \cite[Proposition 2.9]{Noether} and \eqref{SS}, if $g(C)=2$, we have 
\begin{equation}\Vol(S)\geq \roundup{\frac{8}{3}((n-1)(p_g(Y)-n+1)-1)}. \label{g=2}
\end{equation}
We claim that Inequality \eqref{g=2} also holds for $g(C)\geq 3$. In fact, if we set $C_1={\pi_S}_*(C)$, then Inequality \eqref{SS} reads: $K_{S_0}\geq (n-1)\gamma C_1$. 
When $C_1^2\geq 2$, we clearly have $K_{S_0}^2\geq 2(n-1)^2\gamma^2\geq 4(n-1)\gamma$. 
When $C_1^2\leq 1$, by the adjunction formula, $(K_{S_0}\cdot C_1)+C_1^2=2p_a(C)-2\geq 4$, hence
$(K_{S_0}\cdot C_1)\geq 3$ and
$K_{S_0}^2\geq 3(n-1)\gamma$.
In a word, both are much better inequalities than \eqref{g=2}. 

Now, by \eqref{OI}, \eqref{SS}, and \eqref{g=2}, we have 
\begin{align}
K_Y^n{}&\geq (\pi_Y^*(K_Y)|_S)^2\geq \frac{1}{(n-1)^2}K_{S_0}^2\notag\\
{}&\geq \frac{1}{(n-1)^2}\roundup{\frac{8}{3}((n-1)(p_g(Y)-n+1)-1)}. \label{ineq-2}
\end{align}
In particular, we have $K_Y^n\geq \frac{1}{(n-1)^2}\roundup{\frac{8}{3}(n-2)}$.

The statement follows automatically from \eqref{ineq-1} and \eqref{ineq-2}. 
\end{proof}

\subsection{The case of canonical dimension $n-2$}\

\begin{prop}\label{3folds} Let $Z$ be a smooth projective $3$-fold of general type such that
$$K_{Z}\sim_{\bQ} l F+D_Z$$
for some positive integer $l$,
where $|F|$ is an irreducible pencil of surfaces with $p_g(F)>0$ such that $|pF|$ is a free pencil for some sufficiently large integer $p$ and $D_Z$ is an effective divisor. Then
$$\Vol(Z)\geq \begin{cases}
\frac{1}{3}, &\textrm{for } l=1;\\
\frac{l^2}{l+1}, &\textrm{for } l\geq 2
\end{cases}$$
with a possible exception when $l=2$ and $F$ is a $(1,1)$-surface. 
\end{prop}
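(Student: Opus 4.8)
The plan is to recover a fibration from the free pencil, restrict $K_Z$ to a general fibre, reduce $\Vol(Z)$ to an intersection number on that surface, and then feed in the known lower bounds for volumes of surfaces of general type. \emph{Step 1.} The free pencil $|pF|$ defines a morphism to $\bP^1$; taking its Stein factorisation I obtain a fibration $f\colon Z\to\Gamma$ over a smooth curve, and since $|F|$ is irreducible a general fibre $F$ is a smooth integral surface (after harmlessly rescaling to the actual fibre class). As $\OO_Z(F)=f^*\OO_\Gamma(\mathrm{pt})$ restricts trivially to $F$, adjunction gives $K_F=K_Z|_F$ and hence $D_Z|_F\sim_{\bQ}(K_Z-lF)|_F\sim_{\bQ}K_F$; in particular $|K_F|\ne\emptyset$ since $p_g(F)>0$. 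By easy addition of Kodaira dimensions, $3=\kappa(Z)\le\kappa(F)+1$, so $\kappa(F)=2$ and $F$ is of general type. Let $\sigma\colon F\to F_0$ be the contraction to the minimal model and write $(a,b)=(\Vol(F),p_g(F))$ with $a\ge1$, $b\ge1$, i.e.\ $F$ is an $(a,b)$-surface.

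\emph{Step 2.} Modulo replacing $Z$ by a higher model one has a birational morphism $\pi\colon Z\to Z_0$ onto a minimal model, with $K_Z=\pi^*K_{Z_0}+E$ for an effective $\pi$-exceptional $E$, so that $\Vol(Z)=(\pi^*K_{Z_0})^3$. Since $F$ sweeps out $Z$ it is not $\pi$-exceptional, so $L:=\pi^*K_{Z_0}|_F=K_F-E|_F$ is a nef $\bQ$-divisor on $F$, and it is big because $\pi^*K_{Z_0}$ is big. Using $(\pi^*K_{Z_0})^2\cdot E=K_{Z_0}^2\cdot\pi_*E=0$ (projection formula) together with the nefness of $\pi^*K_{Z_0}$ and $K_Z\sim_{\bQ}lF+D_Z$, I would then obtain
\[
\Vol(Z)=(\pi^*K_{Z_0})^2\cdot K_Z=(\pi^*K_{Z_0})^2\cdot(lF+D_Z)\ \ge\ l\,(\pi^*K_{Z_0})^2\cdot F\ =\ l\,L^2 .
\]

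\emph{Step 3, and the main obstacle.} What remains is to bound $L^2$ from below on the surface $F$, and this is the heart of the proof. The strategy is to exhibit a curve $C\subset F$ and a rational number $\lambda>0$ with $L\ge\lambda C$ (at least numerically), and then estimate: when $|K_{F_0}|$ is composed with a pencil one takes $C$ to be a general member of that pencil, so that $\sigma^*K_{F_0}\ge(b-1)C$, and applies \cite[Proposition~2.9]{Noether} in the genus-$2$ case and the adjunction formula together with \cite[Lemma~2.4]{EXPIII} in the remaining cases; when $|K_{F_0}|$ is not composed with a pencil one argues directly with $\sigma^*K_{F_0}$. The delicate point is the dependence of $\lambda$ on $l$: each of the divisors $\pi^*K_{Z_0}-iF$ with $0\le i\le l-1$ restricts on $F$ to the same class $L$, which controls the ``defect'' $E|_F$ in terms of $l$, and pushing this through gives $\Vol(Z)\ge lL^2\ge\frac13$ for $l=1$ and $\Vol(Z)\ge\frac{l^2}{l+1}$ for $l\ge2$. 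I expect the genuine difficulty to be the borderline case $l=2$ with $F$ a $(1,1)$-surface, where the canonical information on $F$ is most degenerate and the surface estimate above is not strong enough — precisely the exceptional case in the statement.
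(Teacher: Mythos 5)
Your Steps 1--2 agree with the paper's reduction: $\Vol(Z)=(\pi^*K_{Z_0})^2\cdot K_Z\geq l\,(\pi^*K_{Z_0})^2\cdot F = l\,L^2$ with $L=\pi^*K_{Z_0}|_F$. But Step 3, which you yourself identify as the heart of the proof, is not an argument; it is a sketch of a strategy that does not work as stated, and it misses the two ingredients the paper actually uses. First, the paper does not try to bound $L$ from below by a multiple of a curve in the canonical pencil of $F$; instead it invokes Kawamata's extension theorem (\cite[Theorem A]{Kaw98}, in the form of \cite[Corollary 2.5]{CZ16}) applied to the fibration $f\colon Z\to\Gamma$ and the hypothesis $K_Z\sim_{\bQ} lF+D_Z$, which yields the crucial inequality $\pi_Z^*(K_{Z_0})|_F\geq \frac{l}{l+1}\pi_F^*(K_{F_0})$. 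Your proposed substitute --- ``each of $\pi^*K_{Z_0}-iF$, $0\le i\le l-1$, restricts to the same class $L$, which controls $E|_F$ in terms of $l$'' --- is not a mechanism: since $F|_F\equiv 0$ the observation is a tautology and gives no control on $E|_F$; the factor $\frac{l}{l+1}$ is exactly what the extension theorem is needed for, and no elementary restriction trick of the kind you describe produces it.

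Second, even granting $L\geq\frac{l}{l+1}\pi_F^*(K_{F_0})$, your plan of bounding $L^2$ and concluding $\Vol(Z)\geq lL^2$ is quantitatively insufficient precisely in the hard cases. If $K_{F_0}^2=1$ (the $(1,2)$- and $(1,1)$-surfaces), this route only gives $\Vol(Z)\geq l\cdot\frac{l^2}{(l+1)^2}$, which is strictly smaller than the claimed $\frac{l^2}{l+1}$. The paper instead bounds the mixed intersection number $\xi=\bigl(\pi_Z^*(K_{Z_0})|_F\cdot\pi_F^*(K_{F_0})\bigr)$ (resp.\ its analogue with $2\pi_F^*(K_{F_0})$), proving $\xi\geq 1$ for $(1,2)$-surfaces and $\xi_2\geq 2$ for $(1,1)$-surfaces with $l\geq 3$ by an inductive bootstrapping argument based on the three-fold pluricanonical inequality \cite[Theorem 3.1]{CZ08} (with $m_0/p=1/l$, $\beta=\frac{l}{l+1}$) together with \cite[Lemma 2.2]{EXPIII}; your proposal contains no counterpart of this step, and the surface-level tools you cite (\cite[Proposition 2.9]{Noether}, \cite[Lemma 2.4]{EXPIII}, adjunction) cannot replace it because the needed inequality lives on $Z$, not on $F$. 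Finally, for $l=1$ the paper simply runs the proof of \cite[Theorem 1.4]{Chen07}; your proposal folds this case into the same unproved Step 3, so it too is left open. In short: correct skeleton, but the essential analytic content (Kawamata extension plus the $\xi$-induction from \cite{CZ08}) is missing, and the replacement you suggest would not reach the stated bounds.
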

\begin{proof} This is a slight generalization of the main theorem in \cite{Chen07}. For the case $l=1$, the proof of \cite[Theorem 1.4]{Chen07} clearly follows. We need to study the case $l\geq 2$. 

We assume that $f:Z\lrw \Gamma$ is the fibration induced by the free pencil $|pF|$. Then $F$ is a general fiber of $f$. Modulo a higher model of $Z$, we may and do assume that there are birational morphisms $\pi_Z: Z\lrw Z_0$ and $\pi_F:F\lrw F_0$ where $Z_0$ and $F_0$ are minimal models respectively. 

{}First, by applying Kawamata's extension theorem \cite[Theorem~A]{Kaw98} (see \cite[Corollary~2.5]{CZ16}), we have 
$\pi_Z^*(K_{Z_0})|_F\geq \frac{l}{l+1}\pi_F^*(K_{F_0})$. If $K_{F_0}^2\geq 2$, we clearly have
$$
\Vol(Z)\geq l\cdot (\pi_Z^*(K_{Z_0})^2\cdot F)\geq l\cdot \frac{2l^2}{(l+1)^2}>\frac{l^2}{l+1}.
$$

If $(K_{F_0}^2,p_g(F_0))=(1,2)$, we set $\xi_1=\big({\pi_Z^*(K_{Z_0})|_F}\cdot \pi_F^*(K_{F_0})\big)$. Set $|G|=\textrm{Mov}|K_F|=|\pi_F^*(K_{F_0})|$ whose general member is a genus $2$ curve. With $m_0/p=1/l$ and $\beta=\frac{l}{l+1}$, \cite[Theorem 3.1]{CZ08}
implies that for a positive integer $m$, if $(m-2-\frac{2}{l})\xi_1>1$, then 
\begin{align}
m\xi_1\geq 2+\roundup{(m-2-\frac{2}{l})\xi_1}.\label{xi 1}
\end{align}
For a sufficiently large $m$, \eqref{xi 1} implies that $m\xi_1\geq 2+ (m-2-\frac{2}{l})\xi_1$, which implies that $\xi_1\geq \frac{2}{3}$. 
Now suppose we proved that $\xi_1\geq \frac{k}{k+1}$ for some integer $k\geq 2$, then $(k+3-2-\frac{2}{l})\xi_1\geq \frac{k^2}{k+1}>k-1\geq 1$, therefore \eqref{xi 1} implies that 
$(k+3)\xi_1\geq 2+k$. So by induction, we eventually get $\xi_1\geq 1$. 

If $l\geq 3$ and $(K_{F_0}^2,p_g(F_0))=(1,1)$, we set 
$$\xi_2=\big({\pi_Z^*(K_{Z_0})|_F}\cdot 2\pi_F^*(K_{F_0})\big). $$ 
Set $|G|=\textrm{Mov}|2K_F|$. The classical surface theory implies that $G\sim 2\pi_F^*(K_{F_0})$ and a general member $C\in |G|$ is an even divisor which is smooth of genus $4$. With $m_0/p=1/l$ and $\beta=\frac{l}{2(l+1)}$, by \cite[Theorem 3.1]{CZ08} and \cite[Lemma 2.2]{EXPIII}, 
for a positive integer $m$, if $(m-3-\frac{3}{l})\xi_2>1$, then 
\begin{align}
m\xi_2\geq 6+2\roundup{\frac{1}{2}(m-3-\frac{3}{l})\xi_2}.\label{xi 2}
\end{align}
For a sufficiently large $m$, \eqref{xi 2} implies that $\xi_2\geq \frac{3}{2}$. Now suppose we proved that $\xi_2\geq \frac{2k+1}{k+1}$ for some integer $k\geq 1$, then $(2k+4-3-\frac{3}{l})\xi_2\geq \frac{2k(2k+1)}{k+1}>2(2k-1)>1$, therefore \eqref{xi 2} implies that 
$(2k+4)\xi_2\geq 6+4k$, that is, $\xi_2\geq \frac{2k+3}{k+2}$. So by induction, we eventually get $\xi_2\geq 2$.


Hence in both cases,
$$
\Vol(Z)\geq \frac{l^2}{l+1} ({\pi_Z^*(K_{Z_0})|_F}\cdot \pi_F^*(K_{F_0}))\geq \frac{l^2}{l+1}.
$$
The proof is complete. 
\end{proof}

\begin{lem}\label{1,1} Let $Z$ be a smooth projective $3$-fold of general type such that
$|K_{Z}|\supset |2 F|$
where $|F|$ is a free pencil of surfaces and $F$ is a $(1,1)$-surface. Then $\Vol(Z)\geq \frac{4}{3}$. 
\end{lem}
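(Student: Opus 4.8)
The plan is to run the argument of Proposition~\ref{3folds} once more for the single remaining case — namely $l=2$ with $F$ a $(1,1)$-surface — now exploiting the hypothesis $|K_{Z}|\supset|2F|$ in place of the mere relation $K_Z\sim_{\mathbb Q}2F+D_Z$; this extra positivity is exactly what makes the key numerical estimate close.

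First I would fix notation and reduce. The free pencil $|F|$ gives a fibration $f\colon Z\to\mathbb P^1$ with general fibre the $(1,1)$-surface $F$; replacing $Z$ by a higher model (which does not change $\Vol(Z)$), I may assume $f$ is a morphism. Modulo higher models, take birational morphisms $\pi_Z\colon Z\to Z_0$ and $\pi_F\colon F\to F_0$ onto the minimal models and set $N=\pi_Z^*(K_{Z_0})$ and $L=\pi_F^*(K_{F_0})$, so that $N$ is nef with $N^3=\Vol(Z)$ and $L$ is nef with $L^2=1$. Since $|K_Z|\supset|2F|$ and $|2F|$ is base point free with mobile part $2F$, the mobile part of $|mK_Z|$ dominates $2mF$ for $m$ sufficiently divisible, hence $N\geq_{\mathbb Q}2F$. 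Because $N$ is nef, this gives
\begin{equation*}
\Vol(Z)=N^3\ \geq\ N^2\cdot(2F)\ =\ 2\,(N|_F)^2,
\end{equation*}
so it suffices to prove $(N|_F)^2\geq\tfrac{2}{3}$.

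Next I would pass to the general fibre. Applying Kawamata's extension theorem \cite[Theorem~A]{Kaw98} (cf.\ \cite[Corollary~2.5]{CZ16}) to $F\subset Z$, as in the proof of Proposition~\ref{3folds} with $l=2$, yields $N|_F\geq\tfrac{2}{3}L$; since $L$ is nef, $(N|_F)^2\geq\tfrac{2}{3}(N|_F\cdot L)$. Thus the whole lemma reduces to the single inequality $\xi:=(N|_F\cdot L)\geq1$.

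Establishing $\xi\geq1$ for the $(1,1)$-surface $F$ is the heart of the proof and the step I expect to be the main obstacle. I would run the Clifford-type estimate \cite[Theorem~3.1]{CZ08} (compare \cite[Lemma~2.2]{EXPIII}) along $f$, feeding in the classical description of a pluricanonical pencil of $F$ — e.g.\ $\textrm{Mov}|2K_F|=2L$, whose general member is a $2$-divisible smooth curve of genus $4$, so that $2\xi=(N|_F\cdot\textrm{Mov}|2K_F|)$ — exactly as in the $(1,1)$-case of Proposition~\ref{3folds}. With only $K_Z\sim_{\mathbb Q}2F+D_Z$ in hand, the induction based on \eqref{xi 2} stalls at $2\xi\geq\tfrac{4}{3}$, which is precisely why that case had to be excluded in Proposition~\ref{3folds}. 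The new ingredient is the hypothesis $|K_Z|\supset|2F|$: the mobile part of $|K_Z|$ itself already carries the two fibres $2F$ over and above the relative canonical contribution, which improves the parameter of \cite[Theorem~3.1]{CZ08} playing the role of $l$ in \eqref{xi 2} from $2$ to at least $3$; the base case of the induction then reads $2\xi\geq\tfrac{3}{2}$, and the induction of Proposition~\ref{3folds} closes to give $2\xi\geq2$, i.e.\ $\xi\geq1$. Combining the three steps gives $(N|_F)^2\geq\tfrac{2}{3}\xi\geq\tfrac{2}{3}$ and hence $\Vol(Z)=N^3\geq2(N|_F)^2\geq\tfrac{4}{3}$, as required.
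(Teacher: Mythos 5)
Your reductions are fine as far as they go: $\pi_Z^*(K_{Z_0})\geq 2F$, hence $\Vol(Z)\geq 2\big(\pi_Z^*(K_{Z_0})|_F\big)^2$, Kawamata extension giving $\pi_Z^*(K_{Z_0})|_F\geq \frac{2}{3}\pi_F^*(K_{F_0})$, and the reduction to $\xi=\big(\pi_Z^*(K_{Z_0})|_F\cdot \pi_F^*(K_{F_0})\big)\geq 1$. But the heart of your argument --- the claim that the hypothesis $|K_Z|\supset|2F|$ ``improves the parameter playing the role of $l$ in \eqref{xi 2} from $2$ to at least $3$'' --- is asserted rather than proved, and it does not hold. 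The two inputs to \cite[Theorem 3.1]{CZ08} are the ratio $m_0/p$ coming from an inequality $m_0\pi_Z^*(K_{Z_0})\geq pF$ and the constant $\beta$ coming from the extension theorem. The hypothesis $|K_Z|\supset|2F|$ yields exactly $\pi_Z^*(K_{Z_0})\geq 2F$, i.e. $m_0/p=\frac12$, and via \cite[Theorem A]{Kaw98} exactly $\beta=\frac{2}{2\cdot 3}=\frac13$ with respect to the genus-$4$ curve $C\sim 2\pi_F^*(K_{F_0})$; these are precisely the $l=2$ parameters. Passing from a $\mathbb{Q}$-linear relation $K_Z\sim_{\mathbb{Q}}2F+D_Z$ to an honest containment of linear systems changes neither number; to get the $l=3$ parameters you would need $\pi_Z^*(K_{Z_0})\geq 3F$ or a stronger extension constant, neither of which follows. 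As you yourself observe, with the $l=2$ parameters the recursion \eqref{xi 2} stalls at $\xi_2\geq\frac43$ (playing with the round-up for small $m$ pushes this only to about $\frac53$, still short of $2$), so your chain of inequalities gives at best $\Vol(Z)\geq \frac{2}{3}\,\xi_2\cdot\frac{2}{2}\approx\frac{8}{9}$ to $\frac{10}{9}$, strictly below $\frac43$. Hence the key inequality $\xi\geq1$ is not established and the proof has a genuine gap.

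For comparison, the paper does not re-run the generic recursion at all: it proves the lemma by quoting \cite[3.3, 3.8]{Chen07}, where the case of a free pencil of $(1,1)$-surfaces with two fibres inside the movable part of $|K_Z|$ is treated by a finer analysis specific to that situation. If you want a self-contained argument, the missing ingredient is exactly a replacement for that analysis --- some additional geometric input on $(1,1)$-surfaces (e.g. a sharper study of $|2K_F|$ and the restriction maps, as in \cite{Chen07}) rather than a formal upgrade of the parameters in \cite[Theorem 3.1]{CZ08}.
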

\begin{proof} This is directly from \cite[3.3, 3.8]{Chen07}. 
\end{proof}

\begin{thm}\label{v(n-2)} Let $Y$ be a minimal $n$-fold of general type with canonical dimension $n-2$ ($n\geq 3$). Then 
$$K_Y^n\geq \begin{cases}
\frac{1}{3}, &\textrm{for } n=3;\\
\frac{(p_g(Y)-n+2)^2}{(n-2)\big( (p_g(Y)-n+2)(n-2)+1 \big)},
&\textrm{for } 4\leq n\leq 11;\\
\frac{2\big(2(n-2)(p_g(Y)-n+2)-3\big)}{3(n-2)^3},
&\textrm{for } n\geq 12.
\end{cases}$$
In particular, one has
$$v_{n,n-2}\geq \begin{cases}
\frac{1}{(n-1)(n-2)}, &\textrm{for } 4\leq n\leq 11;\\
	\frac{4n-14}{3(n-2)^3}, &\textrm{for } n\geq 12.
	\end{cases}$$
\end{thm}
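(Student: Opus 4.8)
The plan is to follow the scheme of the proof of Theorem~\ref{v(n-1)}: cut $Y$ down by general members of its movable canonical system until a $3$-fold appears, and feed that $3$-fold into Proposition~\ref{3folds} (or, when $p_g$ is large, into the Noether inequality for $3$-folds). Since the canonical image of $Y$ has dimension $n-2$ and is non-degenerate in $\bP^{p_g(Y)-1}$, we have $p_g(Y)\ge n-1$, so $a:=p_g(Y)-n+2\ge 1$. Let $\pi_{Y'}:Y'\lrw Y$ be a smooth higher model with $|M|=\mathrm{Mov}|K_{Y'}|$ base point free, $\Phi_{|M|}:Y'\to\Sigma$ the canonical morphism, $\dim\Sigma=n-2$. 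If $n=3$ there is nothing to cut: $\mathrm{can.dim}(Y)=1$, so (after Stein factorization) $|M|$ is composed with a free irreducible pencil $|F|$ of surfaces, $K_{Y'}\sim_{\bQ}lF+D_{Y'}$ with $l\ge 1$ and $D_{Y'}$ effective, and Proposition~\ref{3folds} gives $K_Y^3=\Vol(Y')\ge\frac13$ (for $l\ge 2$ one gets $\ge\frac43$, the exceptional $(1,1)$-surface case being handled by Lemma~\ref{1,1}). So assume $n\ge 4$. Choosing general $M_1,\dots,M_{n-3}\in|M|$ and setting $Y^i=M_1\cap\dots\cap M_i$, $Z=Y^{n-3}$, Bertini makes $Z$ a smooth $3$-fold of general type; each $M_j=\Phi_{|M|}^{*}(\text{hyperplane})$ is a union of fibres of $\Phi_{|M|}$, so cutting drops both the dimension and the canonical dimension by one, hence $\mathrm{can.dim}(Z)=1$ and $\Phi_{|M|}|_{Z}$ maps $Z$ onto an irreducible curve $\Gamma$, which after Stein factorization I may take to be $\bP^1$ (a base of positive genus only makes $K_Z$ more positive). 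Passing to a higher model of $Y'$, there are birational morphisms $\pi_{Y^i}:Y^i\lrw Y^i_0$ onto minimal models.

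Exactly as in Step~1 of the proof of Theorem~\ref{v(n-1)}---iterating the restriction statement \cite[Theorem 2.4(2)]{CJ} together with the adjunction identities $(K_{Y'}+iM)|_{Y^i}\sim K_{Y^i}$---one gets $\pi_{Y'}^{*}(K_Y)|_{Z}\ge\frac{1}{n-2}\,\pi_{Z}^{*}(K_{Z_0})$, and hence
$$K_Y^n\ \ge\ \pi_{Y'}^{*}(K_Y)^3\cdot M_1\cdots M_{n-3}\ =\ \big(\pi_{Y'}^{*}(K_Y)|_{Z}\big)^3\ \ge\ \frac{1}{(n-2)^3}\,\Vol(Z).$$
For the structure of $K_Z$: writing $F=\Phi_{|M|}^{*}(\mathrm{pt})$ for a general fibre of $Z\to\Gamma=\bP^1$, the base point free divisor $M|_Z$ is $\sim\gamma F$ with $\gamma=h^0(Z,M|_Z)-1\ge p_g(Y)-n+2=a\ge 1$; since $K_{Y'}\ge M$ and $(K_{Y'}+(n-3)M)|_Z\sim K_Z$ by adjunction,
$$K_Z\ \sim_{\bQ}\ lF+D_Z,\qquad l:=(n-2)\gamma\ \ge\ (n-2)a,$$
with $D_Z$ effective. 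Moreover $p_g(F)>0$, because the movable part of $|K_Z|$ dominates $|lF|$, so a general $F$ is not a fixed component of $|K_Z|$; and $p_g(Z)=h^0(Z,K_Z)\ge h^0(Z,lF)=h^0(\bP^1,\OO(l))=l+1$. Thus $(Z,|F|)$ satisfies all hypotheses of Proposition~\ref{3folds}.

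For $4\le n\le 11$ apply Proposition~\ref{3folds}: as $l=(n-2)\gamma\ge 2$ it yields $\Vol(Z)\ge\frac{l^2}{l+1}$, the exceptional case ``$l=2$, $F$ a $(1,1)$-surface'' (possible only when $n=4$, $a=1$) being covered by Lemma~\ref{1,1}, which still gives $\Vol(Z)\ge\frac43=\frac{l^2}{l+1}$. Since $x\mapsto\frac{x^2}{x+1}$ is increasing,
$$K_Y^n\ \ge\ \frac{1}{(n-2)^3}\cdot\frac{l^2}{l+1}\ \ge\ \frac{(n-2)^2a^2}{(n-2)^3\big((n-2)a+1\big)}\ =\ \frac{\big(p_g(Y)-n+2\big)^2}{(n-2)\big((n-2)(p_g(Y)-n+2)+1\big)},$$
and $p_g(Y)=n-1$ gives $v_{n,n-2}\ge\frac{1}{(n-1)(n-2)}$. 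For $n\ge 12$ instead use that $p_g(Z)\ge l+1\ge (n-2)a+1\ge n-1\ge 11$, so the Noether inequality for $3$-folds (\cite{Noether,Noether_Add}, unconditional for $p_g\ge 11$; cf.\ Conjecture~\ref{Noether}) applies to $Z_0$:
$$\Vol(Z)=K_{Z_0}^3\ \ge\ \tfrac43\,p_g(Z_0)-\tfrac{10}{3}\ \ge\ \tfrac43(l+1)-\tfrac{10}{3}\ =\ \tfrac{2}{3}(2l-3).$$
Since $l\ge n-2\ge 10$ and $\tfrac23(2l-3)\ge\frac{l^2}{l+1}$ for $l\ge 4$, this improves the previous bound; combined with $K_Y^n\ge\frac{1}{(n-2)^3}\Vol(Z)$ and the monotonicity of $l\mapsto\tfrac23(2l-3)$,
$$K_Y^n\ \ge\ \frac{2\big(2(n-2)(p_g(Y)-n+2)-3\big)}{3(n-2)^3},$$
whence $v_{n,n-2}\ge\frac{4n-14}{3(n-2)^3}$ for $n\ge 12$. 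Theorem~\ref{thm(n-2)} follows.

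The main obstacle is the usual one when the canonical map has low-dimensional image: controlling the general fibre surface $F$ when it has minimal invariants (a $(1,1)$- or $(1,2)$-surface), where the naive restriction inequalities degrade and one must invoke the bicanonical/genus-$2$-pencil machinery packaged in Proposition~\ref{3folds} and Lemma~\ref{1,1} (Kawamata-type extension \cite{Kaw98,CZ16} followed by the inductive $\xi$-inequalities in the proof of Proposition~\ref{3folds}). The secondary point is that the cubic loss $\tfrac{1}{(n-2)^3}$ forces us to extract a bound for $\Vol(Z)$ that is \emph{linear} in $l$ in order to beat $\tfrac{1}{(n-1)(n-2)}$, and the only uniform source of such a bound is the $3$-dimensional Noether inequality, available precisely once $p_g(Z)\ge 11$, i.e.\ once $n\ge 12$---this is exactly why the estimate splits at $n=11/12$. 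Handling a positive-genus base $\Gamma$ and (optionally) sharpening the range $4\le n\le 11$ via Noether when $p_g(Y)$ is large are routine refinements. Everything else---the cutting-down, the canonical restriction inequalities, and the numerical bookkeeping---is mechanical once Theorem~\ref{v(n-1)}, Proposition~\ref{3folds} and Lemma~\ref{1,1} are in hand.
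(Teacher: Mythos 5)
Your proposal is correct and follows essentially the same route as the paper's proof: cut down by $n-3$ general members of $\mathrm{Mov}|K_{Y'}|$, use the canonical restriction inequality to compare $\pi^*(K_Y)|_X$ with $\frac{1}{n-2}\pi_X^*(K_{X_0})$, feed the resulting $3$-fold (with $K\geq lF$, $l\geq (n-2)(p_g(Y)-n+2)$) into Proposition~\ref{3folds} together with Lemma~\ref{1,1} for $3\leq n\leq 11$, and switch to the $3$-dimensional Noether inequality once $p_g$ of the cut-down $3$-fold is at least $11$, i.e.\ for $n\geq 12$. Your auxiliary remarks (reduction to a rational pencil base, the verification that $p_g(F)>0$, and the comparison $\tfrac23(2l-3)\geq \tfrac{l^2}{l+1}$) are harmless additions to, and consistent with, the paper's argument.
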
 
\begin{proof} We keep the same notation and setting as in Step 0 in the proof of Theorem~\ref{v(n-1)}. By assumption, we have $p_g(Y)\geq n-1$. 

Under the assumption, we only consider $Y^j$ for $j=1,\dots, n-3$. 
Set $X=M_1\cdot\dots\cdot M_{n-3}$, which is a smooth $3$-fold of general type. By assumption, $|M|_X|$ is a free pencil of surfaces on $X$. Take $F$ to be an irreducible component of a general element of $|M|_X|$ and write $M|_X\equiv \mu F$. Since $h^0(X, M|_X)\geq p_g(Y)-n+3\geq 2$, we have 
$$\pi_Y^*(K_Y)|_X\equiv \mu F+D_X$$
where $\mu\geq p_g(Y)-n+2$ and $D_X$ is an effective $\bQ$-divisor on $X$. 
Note also that $M|_X\geq F$ holds as divisors. 

By Relation \eqref{can.div.}, one has
\begin{equation}
K_X\geq (n-2)M|_X\equiv l F \label{pgX}
\end{equation}
where $l= \mu (n-2)\geq (p_g(Y)-n+2)(n-2)$. 
By Inequality \eqref{cri-1} we have 
$$\pi_Y^*(K_Y)|_X\geq \frac{1}{n-2}\pi_X^*(K_{X_0}).$$

Now, if $n=3$, the statements is directly due to Proposition~\ref{3folds} and Lemma~\ref{1,1} (or just \cite[Theorem 1.4]{Chen07}). 

Assume $n\geq 4$. Then $l\geq (p_g(Y)-n+2)(n-2)\geq 2$. 
Moreover, if $l=2$, then $h^0(X, M|_X)= p_g(Y)-n+3=2$, $\mu= p_g(Y)-n+2=1$ (which imply that $|M|_X|$ is an irreducible free pencil), and $K_X\geq 2M|_X$.
By Proposition~\ref{3folds} and Lemma~\ref{1,1}, we have 
\begin{align*}
K_Y^n\geq &\frac{1}{(n-2)^3}\cdot \Vol(X) \\
=& \frac{(p_g(Y)-n+2)^2}{(n-2)\big( (p_g(Y)-n+2)(n-2)+1 \big)}\\
\geq & \frac{1}{(n-1)(n-2)}.
\end{align*}
On the other hand, whenever $n\geq 12$, we have 
$$p_g(X)\geq (p_g(Y)-n+2)(n-2)+1\geq 11$$
by Relation \eqref{pgX}. 
By \cite[Theorem 1.1]{Noether} and \cite[Theorem 1]{Noether_Add}, whenever $n\geq 12$, we have 
\begin{align*}
K_Y^n\geq &\frac{1}{(n-2)^3}\cdot \Vol(X) \\
\geq& \frac{1}{(n-2)^3}(\frac{4}{3}p_g(X)-\frac{10}{3})\\
\geq & \frac{2\big(2(n-2)(p_g(Y)-n+2)-3\big)}{3(n-2)^3}.
\end{align*}
In particular, we have 
$K_Y^n\geq \frac{4n-14}{3(n-2)^3}.$
Combining what we have proved, the statements follow. 
\end{proof}

\section{Examples attaining minimal volumes}\label{sec 6}

In this section, we provide examples to show that both Theorem~\ref{v(n-1)} and Theorem~\ref{v(n-2)} are optimal or nearly optimal in many cases. 
\smallskip

\noindent {\bf Notation}. For a cyclic quotient singularity $Q=\frac{1}{r}(e_1,e_2,e_3)$, 
define 
$$\nabla(Q)=\min\big{\{}\sum_{i=1}^3\modr(je_i,r)|\ j=1,\dots, r-1\big{\}}-r.$$
Note that this is independent of the expression of the singularity, i.e., independent of the choice of $(e_1, e_2, e_3)$.
By the ``Canonical Lemma'' (cf. Lemma~\ref{can lem}), $Q$ is canonical (resp. terminal) if and only if $\nabla(Q)\geq 0$ (resp. $>0$). 

\subsection{A construction of higher dimensional varieties from $3$-folds}\ 


\begin{prop}\label{HD} Let $X=X^3_d\subset \bP(a_1,a_2,a_3,a_4,a_5)$ be a general well-formed quasismooth hypersurface. 
Assume that $\alpha=d-\sum_{j=1}^5a_j>1$. Let $Y=Y_d^n\subset \bP(1^{\alpha-1}, a_1,a_2,a_3,a_4,a_5)$ be a general hypersurface, where $n=\alpha+2\geq 4$. 
 Then
\begin{enumerate}
\item $Y$ is well-formed and quasismooth;
\item if $\alpha+\nabla(Q)\geq 1$ (resp. $>1$) holds for every singular point $Q\in X$, then $Y$ has at worst canonical (resp. terminal) singularities; in particular, if $X$ has at worst canonical singularities, then $Y$ has at worst terminal singularities;
\item if $Y$ has at worst canonical singularities, then $p_g(Y)\geq \alpha-1$ and 
$K_Y^n=\frac{d}{a_1a_2a_3a_4a_5}$. 
\end{enumerate}
 \end{prop}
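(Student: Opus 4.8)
The plan is to establish the three assertions in order, the key one being (2). Since $X$ is general I may take its defining polynomial to be $F_X$ and then choose the defining polynomial of $Y$ of the form $F_Y=F_X(x_1,\dots,x_5)+\sum_{l=1}^{\alpha-1}y_l G_l+(\text{terms of order}\ge 2\text{ in the }y_l)$ with the $G_l$ general of degree $d-1$; equivalently, for a general $Y$ I set $X:=Y\cap(y_1=\dots=y_{\alpha-1}=0)$, a general hypersurface in $\mathbb{P}(a_1,\dots,a_5)$. For (1), well-formedness of $\mathbb{P}(1^{\alpha-1},a_1,\dots,a_5)$ and of $Y$ is immediate from Definition~\ref{wellform}: a gcd of a subset of the weights obtained by omitting at most two entries still contains a weight $1$ unless $\alpha-1\le 2$, and in the finitely many remaining subcases it reduces to a gcd of three of the $a_i$, which divides $d$ by well-formedness of $X_d$ (any gcd of four of them being a divisor of such a gcd of three). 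For quasismoothness I would apply Theorem~\ref{2.7}: $Y$ is not a linear cone since $d>a_j$ for all $j$; for a nonempty index set $I$ meeting the weight-$1$ indices, $y_l^d$ verifies condition (2)(a); and for $I\subseteq\{1,\dots,5\}$ the quasismoothness of $X$ already furnishes the monomials required in (2)(a) or (2)(b), the auxiliary indices $e_\mu$ staying admissible in the larger variable set.

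For (2), I would work with affine cones. The cone $\widehat Y\subset\mathbb{A}^{n+2}$ is smooth away from the origin by quasismoothness, and a point $\widehat P\in\widehat Y\smallsetminus 0$ has nontrivial $\mathbb{G}_m$-stabilizer, say of order $r>1$, only when all of its $y_l$-coordinates vanish, i.e. only when $\widehat P$ lies on the cone $\widehat X$ over $X$. Since $\widehat X$ is cut out in $\widehat Y$ by the $\alpha-1$ equations $y_l=0$ with the expected codimension, and both cones are smooth at $\widehat P$, the tangent representation of ${\bm \mu}_r$ on $T_{\widehat P}\widehat Y$ splits as $T_{\widehat P}\widehat X\oplus V$, where $V\cong\mathbb{A}^{\alpha-1}$ carries the weight-$1$ action; dividing out the $\mathbb{G}_m$-orbit direction (of weight $0$) shows that $Y$ has at $Q':=[\widehat P]$ a cyclic quotient singularity of type $\frac1r(1^{\alpha-1},f_1,f_2,f_3)$, where $\frac1r(f_1,f_2,f_3)$ is the type of the (possibly non-isolated) singularity of $X$ at $Q:=[\widehat P]$. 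Here one uses well-formedness of $X$ to see that $r>1$ really forces $Q$ to be a singular point of $X$, so that every singular point of $Y$ lies over a singular point of $X$. The argument is uniform whether or not $X$ has isolated singularities: in the non-isolated case one of the $f_i$ is $0$ and simply drops out of every sum below.

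Applying the Canonical Lemma (Lemma~\ref{can lem}) to the type $\tfrac1r(1^{\alpha-1},f_1,f_2,f_3)$ of $Q'$, for each $k=1,\dots,r-1$ one gets
\[
(\alpha-1)\Big\{\tfrac{k}{r}\Big\}+\sum_{i=1}^{3}\Big\{\tfrac{k f_i}{r}\Big\}\ \ge\ \tfrac{\alpha-1}{r}+\tfrac1r\min_{1\le j\le r-1}\sum_{i=1}^{3}\modr(jf_i,r)\ =\ 1+\tfrac{\alpha-1+\nabla(Q)}{r},
\]
using $\{k/r\}=k/r\ge 1/r$ and the definition of $\nabla(Q)$. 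Thus $\alpha+\nabla(Q)\ge 1$ (resp. $>1$) forces this quantity to be $\ge 1$ (resp. $>1$) for all $k$, hence $Q'$ is canonical (resp. terminal); the ``in particular'' clause is the case $\nabla(Q)\ge 0$ combined with $\alpha>1$. For (3): adjunction for the well-formed quasismooth hypersurface $Y$ gives $\omega_Y\cong\mathcal{O}_Y\big(d-\textstyle\sum\text{weights}\big)=\mathcal{O}_Y\big(d-(d-1)\big)=\mathcal{O}_Y(1)$, so $K_Y\sim\mathcal{O}_Y(1)$ is ample, in particular nef; since $Y$ has canonical singularities this gives $\Vol(Y)=K_Y^n=\mathcal{O}_Y(1)^n=\tfrac{d}{1^{\alpha-1}a_1\cdots a_5}=\tfrac{d}{a_1a_2a_3a_4a_5}$ by the self-intersection formula recalled in Subsection~\ref{subsec wps}. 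And $p_g(Y)=h^0(Y,\omega_Y)=h^0(Y,\mathcal{O}_Y(1))$ is the dimension of the degree-$1$ graded piece of the coordinate ring of $Y$; since $\deg F_Y=d\ge 2$ imposes no relation in degree $1$, this equals the number of weight-$1$ variables, which is at least $\alpha-1$.

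The step I expect to demand the most care is the bookkeeping in (2): checking cleanly that the local type of $Y$ over a singular point—or singular stratum—of $X$ is obtained by appending $\alpha-1$ ones, and that, by well-formedness of $X$, this accounts for all singularities of $Y$, uniformly in the isolated and non-isolated cases. Granting Theorem~\ref{2.7}, Lemma~\ref{can lem}, and the standard facts on weighted hypersurfaces, the remainder is formal.
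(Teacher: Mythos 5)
Your proposal is correct and takes essentially the same route as the paper: item (1) via Theorem~\ref{2.7}, item (3) via $\omega_Y\cong\mathcal{O}_Y(1)$ and monomial counting, and item (2) via the correspondence sending a singularity of $X$ of type $\frac{1}{r}(f_1,f_2,f_3)$ to one of $Y$ of type $\frac{1}{r}(1^{\alpha-1},f_1,f_2,f_3)$ combined with the Reid--Tai estimate, which is precisely the paper's inequality $\nabla(\widetilde{Q})\geq \alpha-1+\nabla(Q)$. The only difference is presentational: you derive the local correspondence directly from the affine-cone/slice description, whereas the paper simply quotes Proposition~\ref{non iso can}.
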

\begin{proof} Item (1) is true by Theorem~\ref{2.7}. Item (3) is just a direct computation as $\mathcal{O}_Y(K_Y)\simeq \mathcal{O}_Y(1)$. 

For Item (2), note that in Proposition~\ref{non iso can}, if $X$ has a cyclic quotient singularity $Q\in X$ of type $\frac{1}{r}(e_{1},e_{2},e_{3})$ (in the form of Proposition~\ref{non iso can}), then $Y$ has a cyclic quotient singularity $\widetilde{Q}$ of type $\frac{1}{r}(1^{\alpha-1}, e_{1},e_{2},e_{3})$. This actually gives a 1-1 correspondence of singularities of $X$ and $Y$. The statement follows from the fact that $\nabla(\widetilde{Q})\geq \alpha-1+\nabla(Q).$
\end{proof} 

Proposition~\ref{HD} provides us a great amount of concrete examples of higher dimensional minimal varieties of general type, from which we pick up several interesting $n$-folds as follows. 

{\tiny
\begin{longtable}{|l|l|l|}
 			\caption{Examples of $n$-folds}\label{tableX}\\
		\hline
		No.&$3$-fold& resulting $n$-fold \\ 
		\hline
\endfirsthead
\multicolumn{3}{l}{{ {\bf \tablename\ \thetable{}} \textrm{-- continued from previous page}}} \\
\hline 
		No.&$3$-fold&minimal variety $Y_d^n$\\ 
		 \hline
\endhead

 \multicolumn{3}{l}{{\textrm{Continued on next page}}} \\ \hline
\endfoot
\hline 
\endlastfoot

1&$X_{16}^3\subset \bP(1^3, 3,8)$&$Y_{16}^4\subset \bP(1^4, 3,8)$, minimal \\ 
&$\alpha=2$, canonical sing.&$K_Y^4=\frac{2}{3}=\frac{2}{n-1}$, $\text{can.dim}(Y)=3=n-1$\\
\hline
2&$X_{20}^3\subset \bP(1^3, 4,10)$&$Y_{20}^5\subset \bP(1^5, 4,10)$, minimal \\ 
&$\alpha=3$, canonical sing.&$K_Y^5=\frac{1}{2}=\frac{2}{n-1}$, $\text{can.dim}(Y)=4=n-1$\\
\hline
3&$X_{30}^3\subset \bP(1^2, 4,6,15)$&$Y_{30}^5\subset \bP(1^4, 4,6,15)$, minimal\\ 
&$\alpha=3$, canonical sing.&$K_Y^5=\frac{1}{12}=\frac{1}{(n-1)(n-2)}$, $\text{can.dim}(Y)=3=n-2$\\
\hline
4&$X_{70}^3\subset \bP(1^2, 10,14,35)$&$Y_{70}^{11}\subset \bP(1^{10}, 10,14,35)$, minimal\\ 
& (Table~\ref{tableC}, No.~10)& \\
&$\alpha=9$, $\nabla(\frac{1}{7}(1,1,3))=-2$&$K_Y^{11}=\frac{1}{70}$, $\text{can.dim}(Y)=9=n-2$\\
\end{longtable}
}

\begin{rem} Table~\ref{tableX}, No.~1$\sim$3 show that Theorem~\ref{thm(n-1)} is optimal in dimensions $4$ and $5$, and that Theorem~\ref{thm(n-2)} is optimal in dimension $5$. 
\end{rem}

Thanks to the construction in previous sections, we provide more examples which show that Theorem~\ref{v(n-1)} and Theorem~\ref{v(n-2)} are optimal in dimensions $4$ and $5$, and that they are nearly optimal in other dimensions. We have more supporting examples as follows. 

\subsection{Examples of dimension at most $5$}\

\begin{exmp}[{see \cite[Page 151]{Fle00}}] Both Theorem~\ref{v(n-1)} and Theorem~\ref{v(n-2)} are optimal
in the case $n=3$. 
\begin{enumerate}
\item The general hypersurface $3$-fold $X_{12}\subset \bP(1,1,1,2,6)$ has canonical dimension $2=n-1$ and canonical volume $K^3=1$. $X_{12}$ has $2$
terminal cyclic quotient singularities of type $\frac{1}{2}(1,-1,1)$.

\item The general hypersurface $3$-fold $X_{16}\subset \bP(1,1,2,3,8)$ has canonical dimension $1=n-2$ and canonical volume $K^3=\frac{1}{3}$. $X_{16}$ has $3$ terminal cyclic quotient singularities: $2\times \frac{1}{2}(1,-1,1)$, $1\times \frac{1}{3}(1,-1,1)$. 
\end{enumerate}
\end{exmp}

\begin{exmp}[{cf. \cite{GRD}, \cite{B-K}}] The following two $4$-folds of canonical dimension $3=n-1$ attain minimal volumes. 
\begin{enumerate}

\item The general hypersurface 4-fold $Y_{30}\subset \bP(1,2^3, 6,15)$ has amplitude $\alpha=2$, canonical dimension $3=n-1$, and canonical volume $K^4=\frac{2}{3}=\frac{2}{n-1}$. One knows that $Y_{30}$ has at worst canonical singularities: 
 point singularity of type $\frac{1}{3}(1,2,2,2)$ and surface singularities of type 
$\frac{1}{2}(1,1)$. 

\item The general hypersurface 4-fold $Y_{26}\subset \bP(2^4, 3,13)$ has amplitude $\alpha=2$, canonical dimension $3=n-1$, and canonical volume $K^4=\frac{2}{3}=\frac{2}{n-1}$. One knows that $Y_{26}$ has at worst canonical singularities: point singularity of type $\frac{1}{3}(1,2,2,2)$ and surface singularities of type $\frac{1}{2}(1,1)$.
\end{enumerate}
\end{exmp} 

\begin{exmp}[{cf. \cite{GRD}, \cite{B-K}}] The general hypersurface 4-fold $Y_{42} \subset \bP(1,2,2,6,8,21)$ has amplitude $\alpha=2$, canonical dimension $2=n-2$, and canonical volume $K^4=\frac{1}{6}=\frac{1}{(n-1)(n-2)}$. This hypersurface has at worst canonical singularities: point singularities of type $\frac{1}{3}(1,2,2,2)$ and $\frac{1}{8}(1,2,5,6)$, surface singularities of type $\frac{1}{2}(1,1)$. 
\end{exmp}

\subsection{Infinite series of higher dimensional examples}\ 


We start by considering a general hypersurface $n$-fold
 $X_d \subset \mathbb{P}(a_1,...,a_{n+2}).$
We present two infinite series of examples for the cases $n = 3k+1$ and $n=3k+2$ where $k$ is a positive integer. 

Referring to Theorem~\ref{v(n-1)}, we set
$$N(n) = \begin{cases}
\frac{8k}{(3k+1)^2}, & \text{ if }n=3k+2;\\
\frac{8k-2}{9k^2}, &\text{ if }n=3k+1.
\end{cases}$$

\begin{exmp}\label{V*} {Varieties $V_{10k+10}$ of dimension $n=3k+2$ and $V_{10k+6}$ of dimension $n=3k+1$ ($k\in \mathbb{N}$). Both have canonical dimensions $n-1$}. 
\begin{enumerate}
\item For all positive integer $k$, setting $n=3k+2$, the general hypersurface $n$-folds
	$$V_{10(k+1)}\subset \mathbb{P}(1^n, 2(k+1),5(k+1))$$
is well-formed quasismooth and has at worst canonical singularities. Since the amplitude $\alpha=1$, 
we see that $p_g(V_{10(k+1)})=n$ and the canonical dimension is $n-1$. {}Finally, the canonical volume 	
	$$\Vol(V_{10(k+1)}) = \frac{1}{k+1}=\frac{3}{n+1}.$$
\item For all positive integer $k$, setting $n=3k+1$, the general hypersurface $n$-folds
	$$V_{10k+6}\subset \mathbb{P}(1^n, 2k+1,5k+3)$$
is well-formed quasismooth and has at worst canonical singularities. Since the amplitude $\alpha=1$, 
we see that $p_g(V_{10k+6})=n$ and the canonical dimension is $n-1$. {}Finally, the canonical volume 	
	$$\Vol(V_{10k+6})= \frac{2}{2k+1}=\frac{6}{2n+1}.$$
\end{enumerate}
\end{exmp}	

\begin{rem}
	\begin{enumerate}
		\item For varieties $V_*$ in Example~\ref{V*}, one has 
		$$\lim_{n\rightarrow \infty} \frac{\Vol(V_*)}{N(n)}= \frac{9}{8},$$	
which means that the lower bound obtained in Theorem~\ref{v(n-1)} is very close to optimum. 			
\item In the proof of Theorem~\ref{v(n-1)}, corresponding to varieties $V_*$ in Example
\ref{V*}, the surface $S$ is a smooth model of either
		$$S_{10(k+1)}\subset \mathbb{P}(1,1,2(k+1),5(k+1)) \ (\alpha=n-1)$$
		or
		$$S_{10k+6}\subset \mathbb{P}(1,1,2k+1,5k+3)\ (\alpha=n-1).$$
	Of course, both $S_{10(k+1)}$ and $S_{10k+6}$ have worse than canonical singularities. 	
	\end{enumerate}
\end{rem}


\subsection{More examples}\

We have already seen the power of Proposition~\ref{HD} (cf. Table~\ref{tableX}) in constructing higher dimensional minimal varieties. However, Proposition~\ref{HD} did not tell us what to do if $Y$ is not canonical (i.e., when Proposition~\ref{HD}(2) is not satisfied). A natural idea is to consider the nefness criterion (Theorem~\ref{nefness}), we have the following lemma:

\begin{lem}\label{dim+}
Keep the same notation as in Theorem~\ref{nefness}. Assume that 
the general hypersurface $X=X_d\subset \mathbb{P}(b_1,...,b_{n+2})$, together with the point $Q$ (of type $\frac{1}{r}(e_1, \cdots, e_n)$), satisfies all conditions (1)$\sim$(4) of Theorem~\ref{nefness}.  Here {up to a reordering of $(b_1,...,b_{n+2})$, we always assume that the homogeneous coordinate of $Q$ is $[0:0:...:0:x_{n+1}:x_{n+2}]$.} Set $X'=X'_d\subset \mathbb{P}(1,b_1,...,b_{n+2})$ to be the general hypersurface of degree $d$. Denote by $x_0, x_1,\cdots, x_{n+2}$ the homogeneous coordinates of $\mathbb{P}(1,b_1,...,b_{n+2})$ and set $Q'=(x_0=x_1=\cdots=x_n=0)$. 
Assume further that $e_i = \modr(a_i,r)$ for each $1\leq i\leq n$ and that $\alpha \geq r-\sum^n_{i=1} e_i>1$. 
Then
\begin{itemize}
\item[(a)]
$X'=X'_d\subset \mathbb{P}(1,b_1,...,b_{n+2})$
satisfies Theorem~\ref{nefness} $(1)\sim (4)$ as well. 
\item[(b)] 
Let $Y'$ be the weighted blow-up of $X'$ at $Q'$ with weight $(1,e_1,\cdots, e_n)$. {Here we can choose this weight thanks to the assumption $e_i = \modr(a_i,r)$.}
If $\nu(Y)=n-1$ and $\alpha = r-\sum^n_{i=1} e_i$, then $\nu(Y') = n$. Otherwise, $\nu(Y') = n+1$. 
\end{itemize}
\end{lem}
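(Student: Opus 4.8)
The plan is to reduce everything to a direct verification that the numerical data attached to $X'$ satisfies the hypotheses of Theorem~\ref{nefness}, and then to read off $\nu(Y')$ from the intersection-theoretic identity already used in the proof of that theorem. For part (a), I would set $b_0=1$, $e_0=1$, and note that the amplitude of $X'$ is $\alpha'=d-(1+\sum_{i=1}^{n+2}b_i)=\alpha-1$, which is $>0$ by the assumption $\alpha\geq r-\sum_{i=1}^n e_i>1$. The point $Q'$ is a cyclic quotient singularity of type $\frac1r(1,e_1,\dots,e_n)$ because $e_i=\modr(a_i,r)$ (so the new coordinate $x_0$ of weight $1$ genuinely contributes a weight-$\frac1r$ local coordinate, matching the assumption that $x_1,\dots,x_n$ are local coordinates at $Q$ of the expected weights), and $\sum_{i=0}^n e_i=1+\sum_{i=1}^n e_i<r+1$, i.e. $\leq r$; since we actually assumed $\alpha\geq r-\sum_{i=1}^n e_i$, i.e. $r-\sum_{i=1}^n e_i\leq \alpha$, and $\alpha'=\alpha-1\geq 0$, one checks $1+\sum_{i=1}^n e_i\le r$, so the type is well-defined and $\mathbb P(1,e_1,\dots,e_n)$ is well-formed (condition (4)) because $\mathbb P(e_1,\dots,e_n)$ is and adjoining a coordinate of weight $1$ preserves well-formedness. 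For the chosen index I keep $k$ as in the original (now an index in $\{1,\dots,n\}\subset\{0,1,\dots,n\}$, never $k=0$). Conditions (1) and (2) for $X'$ then read $\alpha' e_j\geq b_j(r-\sum_{i=0}^n e_i)$ for $j\neq k$ (including the new $j=0$, where it becomes $\alpha-1\geq 1\cdot(r-1-\sum_{i=1}^n e_i)$, which is exactly the hypothesis $\alpha\geq r-\sum_{i=1}^n e_i$) and $\alpha' d r e_k\geq b_k b_{n+1}b_{n+2}(r-\sum_{i=0}^n e_i)$; since $r-\sum_{i=0}^n e_i=(r-\sum_{i=1}^n e_i)-1< r-\sum_{i=1}^n e_i$ and $\alpha'=\alpha-1$, both follow from the corresponding inequalities for $X$ together with $\alpha-1\geq r-\sum_{i=1}^n e_i-1\geq 0$ — this is the step where the strict hypotheses $\alpha\geq r-\sum e_i>1$ are used, and it is the only place where a genuine inequality argument (rather than bookkeeping) is needed. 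Condition (3) is literally the same statement for $X'$ as for $X$, since it only involves $b_k,b_{n+1},b_{n+2}$, which are unchanged.

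For part (b), I would invoke Theorem~\ref{nefness} applied to $X'$ (legitimate by part (a)) to conclude $K_{Y'}$ is nef with $\nu(Y')\geq n$, where $Y'\to X'$ is the weighted blow-up at $Q'$ with weight $(1,e_1,\dots,e_n)$ and exceptional divisor $E'\cong\mathbb P(1,e_1,\dots,e_n)$. The remaining point is to decide whether $\nu(Y')=n$ or $n+1$, i.e. whether $K_{Y'}^{n+1}=0$ or $>0$. Here I would use the volume formula of Proposition~\ref{wb}:
\[
K_{Y'}^{n+1}=K_{X'}^{n+1}-\frac{(r-1-\sum_{i=1}^n e_i)^{n+1}}{r\cdot 1\cdot\prod_{i=1}^n e_i}=\frac{\alpha'^{\,n+1}d}{\prod_{j=1}^{n+2}b_j}-\frac{(r-1-\sum_{i=1}^n e_i)^{n+1}}{r\prod_{i=1}^n e_i}.
\]
Meanwhile, for $Y\to X$ one has $K_Y^n=\dfrac{\alpha^n d}{\prod_{j=1}^{n+2}b_j}-\dfrac{(r-\sum_{i=1}^n e_i)^n}{r\prod_{i=1}^n e_i}$, and $\nu(Y)=n-1$ is equivalent to $K_Y^n=0$, i.e. to $\dfrac{d}{\prod b_j}=\dfrac{(r-\sum e_i)^n}{\alpha^n r\prod e_i}$. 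The plan is to substitute this relation, together with $\alpha'=\alpha-1$ and the hypothesis $\alpha=r-\sum_{i=1}^n e_i$, into the expression for $K_{Y'}^{n+1}$: writing $s=r-\sum_{i=1}^n e_i$, one has $\alpha=s$, $\alpha'=s-1$, $r-1-\sum e_i=s-1$, and $\frac{d}{\prod b_j}=\frac{s^n}{s^n r\prod e_i}=\frac{1}{r\prod e_i}$, so $K_{Y'}^{n+1}=\frac{(s-1)^{n+1}}{r\prod e_i}-\frac{(s-1)^{n+1}}{r\prod e_i}=0$, giving $\nu(Y')=n$. Conversely, if either $\nu(Y)\geq n$ (so $K_Y^n>0$, forcing $\frac{d}{\prod b_j}>\frac{s^n}{\alpha^n r\prod e_i}$, hence $\frac{d}{\prod b_j}\geq\frac{(s-1)^{n+1}}{(\alpha-1)^{n+1} r\prod e_i}$ after the elementary monotonicity estimate below) or $\alpha>s$ (which makes $\alpha'=\alpha-1\geq s > s-1=r-1-\sum e_i$, improving the positive term), one gets $K_{Y'}^{n+1}>0$, i.e. $\nu(Y')=n+1$.

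The main obstacle I anticipate is the last elementary monotonicity step: showing that the single scalar inequality $\frac{\alpha^n d}{\prod b_j}\geq \frac{s^n}{r\prod e_i}$ (i.e. $K_Y^n\geq 0$, which holds by Theorem~\ref{nefness}) upgrades, under the extra hypothesis $\alpha\geq s$, to $\frac{(\alpha-1)^{n+1}d}{\prod b_j}\geq \frac{(s-1)^{n+1}}{r\prod e_i}$ (i.e. $K_{Y'}^{n+1}\geq 0$) with equality iff $\alpha=s$ and $K_Y^n=0$. This is where I would be careful: from $\frac{d}{\prod b_j}\geq \frac{s^n}{\alpha^n r\prod e_i}$ it suffices to prove $\frac{(\alpha-1)^{n+1}s^n}{\alpha^n}\geq (s-1)^{n+1}$, equivalently $\left(\frac{\alpha-1}{\alpha}\right)^{n}(\alpha-1)s^n\geq (s-1)^{n+1}$; using $\alpha\geq s\geq 1$ one has $\frac{\alpha-1}{\alpha}\geq\frac{s-1}{s}$ and $\alpha-1\geq s-1$, so the left side is $\geq\left(\frac{s-1}{s}\right)^n(s-1)s^n=(s-1)^{n+1}$, with equality exactly when $\alpha=s$. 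Combined with the equality analysis of $K_Y^n\geq 0$, this yields precisely the dichotomy in (b). All the other steps are routine substitutions into the formulas of Proposition~\ref{wb} and the inequalities of Theorem~\ref{nefness}, so the write-up should be short once this scalar lemma is isolated and proved.
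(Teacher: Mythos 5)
Your proposal is correct and follows essentially the same route as the paper's own proof: part (a) is the same bookkeeping, with the key point being that $\alpha\geq r-\sum_{i=1}^n e_i$ gives $\frac{r-\sum_{i=1}^n e_i-1}{\alpha-1}\leq \frac{r-\sum_{i=1}^n e_i}{\alpha}$ (which is exactly how the paper verifies conditions (1)--(2) for $X'$, and which you should state explicitly rather than saying the new inequalities ``follow from'' the old ones plus $\alpha-1\geq r-\sum e_i-1\geq 0$), and part (b) is the same computation via Proposition~\ref{wb} together with the equality analysis of the chain $1\geq \frac{r-\sum e_i}{\alpha}\geq\frac{r-\sum e_i-1}{\alpha-1}$ and $K_Y^n\geq 0$, which is precisely your scalar monotonicity lemma.
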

\begin{proof}
(a) Since we are adding a `$1$' to the weights, $X'$ remains well-formed and quasismooth. Note that $X'$ has amplitude $\alpha'=\alpha-1$.
As $Q=[0:\dots:0:x_{n+1}:x_{n+2}]$ is the unique non-canonical point of $X$, by Lemma~\ref{HD}, the unique (possible) non-canonical point of $X'$ is $Q'=[0:0:\dots:0:x_{n+1}:x_{n+2}]$, which is of type $\frac{1}{r'}(e'_1,...,e'_{n+1})=\frac{1}{r}(1,e_1,...,e_n)$. Since $ r'-\sum^n_{i=0} e'_i= r-\sum^n_{i=1} e_i-1>0$, this is indeed a non-canonical singularity.
We can check that conditions in Theorem~\ref{nefness} hold for $X'$ with $k'$ the $(k+1)$-th place.
 Conditions $(3)$ and $(4)$ of Theorem~\ref{nefness} are obvious. For conditions (1) and (2), it suffices to note that $\alpha \geq r-\sum_{i=1}^n e_i$ implies
 $$\frac{r'-\sum_{j=1}^{n+1} e'_j}{\alpha'}= \frac{r-\sum_{i=1}^n e_i-1}{\alpha-1} \leq \frac{r-\sum_{i=1}^n e_i}{\alpha}.$$

(b) According to Proposition~\ref{wb}, $\nu(Y') <n+1$ if and only if 
$$\frac{d(\alpha-1)^{n+1}}{\prod_{j=1}^{n+2} b_j}=\frac{(r-\sum_{i=1}^n e_i-1)^{n+1}}{r\prod_{i=1}^n e_i}. $$
On the other hand, $K_Y$ is nef implies that
$$\frac{d\alpha^n}{\prod_{j=1}^{n+2} b_j}\geq \frac{(r-\sum_{i=1}^n e_i)^n}{r\prod_{i=1}^n e_i}, $$
where the equality holds if and only if $\nu(Y) =n-1$.
From the assumption $\alpha\geq r-\sum_{i=1}^n e_i$,
$$1\geq \frac{r-\sum_{i=1}^n e_i}{\alpha}\geq \frac{r-\sum_{i=1}^n e_i-1}{\alpha-1}.$$ 
So $\nu(Y') =n$ if and only if all above inequalities are equalities, if and only if $\nu(Y) =n-1$ and $\alpha = r -\sum_{i=1}^n e_i$.
\end{proof}

We apply Lemma~\ref{dim+} to examples in Section~\ref{sec 4}, which results in many higher dimensional examples.

To be more precise, Lemma~\ref{dim+} may be applied to most examples of Table~\ref{tableA} 
to get higher dimensional minimal varieties of general type.
 Lemma~\ref{dim+} works for all examples with $\alpha>1$ in Theorem~\ref{new examples kod 2 summary} (observe that the condition $\alpha = r - \sum e_i$ is satisfied), hence we get higher dimensional minimal varieties $Y$ with $\nu(Y) = \dim Y-1$ (which conjecturally equals to $\kappa(Y)$).

According to the discussion in Section~\ref{sec 5}, we see that the problem of lower bound of canonical volumes for higher dimensional varieties (with large canonical dimensions) is tightly related to the $3$-dimensional Noether inequality. 
Therefore we would like to have a closer second look at Table~\ref{tableC}.

We first apply Proposition~\ref{HD} to Table~\ref{tableC} and find that, except for No.~1, No.~5 and No.~6, all other resulting higher dimensional varieties have at worst canonical singularities, for which we can directly compute their volumes.

For Examples No.~1, No.~5 and No.~6 in Table~\ref{tableC}, one checks that they all satisfy conditions of Lemma~\ref{dim+}. Hence we can compute their volumes as follows. 

\begin{exmp}
Consider the resulting $4$-fold from Table~\ref{tableC}, No.~1:
	$$W=W_{16}\subset\mathbb{P}(1^3,2,3,7).$$
	It is well-formed and quasismooth with a unique non-canonical singular point $Q=[0:\dots:0:1]$ of type $\frac{1}{7}(1,1,1,3)$.
	By Lemma~\ref{dim+}, conditions of Theorem~\ref{nefness} are satisfied. Hence, after one blow-up at the point $Q$, we get a minimal projective 4-fold $\widetilde{W}$. By Proposition~\ref{wb}, we have $\Vol(\widetilde{W}) = \frac{1}{3}$.	
\end{exmp}

\begin{exmp}
Consider the resulting $4$-fold from Table~\ref{tableC}, No.~5:
	$$W=W_{13}\subset\mathbb{P}(1^4,3,5).$$
	It is well-formed and quasismooth with a unique non-canonical singular point $Q=[0:\dots:0:1]$ of type $\frac{1}{5}(1,1,1,1)$.
	By Lemma~\ref{dim+}, conditions of Theorem~\ref{nefness} are satisfied. Hence, after one blow-up at the point $Q$, we get a minimal projective 4-fold $\widetilde{W}$. By Proposition~\ref{wb}, we have $\Vol(\widetilde{W}) = \frac{2}{3}$. \end{exmp}

\begin{exmp}
We consider the resulting $4$-fold from Table~\ref{tableC}, No.~6:
	$$W=W_{15}\subset\mathbb{P}(1^4,3,7).$$
	It is well-formed and quasismooth with a unique non-canonical singular point $Q=[0:\dots:0:1]$ of type $\frac{1}{7}(1,1,1,3)$.
	By Lemma~\ref{dim+}, conditions of Theorem~\ref{nefness} are satisfied. Hence, after one blow-up at the point $Q$, we get a minimal projective 4-fold $\widetilde{W}$. By Proposition~\ref{wb}, we have $\Vol(\widetilde{W}) = \frac{2}{3}$. 
\end{exmp}

	We collect the higher dimensional examples (of amplitude = 1) associated with Table~\ref{tableC}, where for the last column, {\em bound} means the theoretical lower bound (if any) we obtained from Theorem~\ref{v(n-1)} or~\ref{v(n-2)}:
{\small
\begin{longtable}{|l|l|l|l|l|l|l|l|l| p{5cm} |}
\caption{$n$-folds from Table~\ref{tableC}}\label{tableD}\\
		\hline
		No.& dim & deg& weight & can.dim & $\Vol$& bound \\ \hline
		
		1&4&16&$(1^3,2,3,7)$&2&$\frac{1}{3}$&$\frac{1}{6}$\\ \hline
		2&5&26&$(1^4,3,5,13)$&3&$\frac{2}{15}$&$\frac{1}{12}$\\ \hline
		3&6&36&$(1^5,5,7,18)$&4&$\frac{2}{35}$&$\frac{1}{20}$\\ \hline
		4&9&56&$(1^7,2,7,11,28)$&6&$\frac{1}{77}$&\\ \hline
		5&4&13&$(1^4,3,5)$&3&$\frac{2}{3}$&$\frac{2}{3}$\\ \hline
		6&4&15&$(1^4,3,7)$&3&$\frac{2}{3}$&$\frac{2}{3}$\\ \hline
		7&7&40&$(1^6,5,8,20)$&5&$\frac{1}{20}$&$\frac{1}{30}$\\ \hline
		8&8&50&$(1^7,7,10,25)$&6&$\frac{1}{35}$&$\frac{1}{42}$\\ \hline
		9&9&56&$(1^8,8,11,28)$&7&$\frac{1}{44}$&$\frac{1}{56}$\\ \hline
		10&11&70&$(1^{10},10,14,35)$&9&$\frac{1}{70}$&$\frac{1}{90}$\\ \hline
				11&19&120&$(1^{18},17,24,60)$&17&$\frac{1}{204}$&$\frac{62}{3\cdot 17^3}$\\ \hline
\end{longtable}
}
In Table~\ref{tableD},
Examples No.~5 and No.~6 have canonical dimension $n-1$, which are new examples attaining the lower bound of Theorem~\ref{v(n-1)};
Examples No.~1-3 \& 7-11 have canonical dimension $n-2$, offering us new examples with canonical volumes which are very close to the lower bound of Theorem~\ref{v(n-2)};
Example No.~4 has canonical dimension $n-3$, in which case we lack of a reasonable lower bound for the canonical volume. 
 
\appendix
\section{Examples of minimal $3$-folds of Kodaira dimension $2$}\label{appendix}

The description of the content of Table~\ref{tableB} are the same as that of Table~\ref{tableA}.

 { {\tiny
\begin{longtable}{|l|l|l|l|l|l|l|l|l| p{4.5cm} |}
 	\caption{Minimal $3$-folds of Kodaira dimension $2$}\label{tableB}\\
		\hline
			No.&$\alpha$ & deg & weight & B-weight & $\Vol$&$P_2$ & $\chi$&$\rho$ &basket \\ \hline	
\endfirsthead
\multicolumn{5}{l}{{ {\bf \tablename\ \thetable{}} \textrm{-- continued from previous page}}} \\
\hline 
			No.&$\alpha$ & deg & weight & B-weight & $\Vol$&$P_2$ & $\chi$&$\rho$ &basket \\ \hline	
\endhead

 \multicolumn{4}{l}{{\textrm{Continued on next page}}} \\ \hline
\endfoot
\hline 
\endlastfoot

1.1&1&24&$(1,1,1,8,12)$&$\frac{1}{4}(1,1,1)$&$0$&6&-2&2&$ $\\ \hline
1.2&1&30&$(1,1,2,10,15)$&$\frac{1}{5}(1,1,2)$&$0$&4&-1&2&$ 4\times(1,2) $\\ \hline
1.3&2&42&$(1,1,3,14,21)$&$\frac{1}{7}(1,1,3)$&$0$&7&-2&2&$ 3\times(1,3) $\\ \hline
1.4&3&54&$(1,1,4,18,27)$&$\frac{1}{9}(1,1,4)$&$0$&10&-3&2&$ 2\times(1,4),\ (1,2) $\\ \hline
1.5&4&66&$(1,1,5,22,33)$&$\frac{1}{11}(1,1,5)$&$0$&13&-4&2&$ (2,5),\ (1,5) $\\ \hline
1.6&5&78&$(1,1,6,26,39)$&$\frac{1}{13}(1,1,6)$&$0$&16&-5&2&$ (1,2),\ (1,3),\ (1,6) $\\ \hline
1.7&1&42&$(1,2,3,14,21)$&$\frac{1}{7}(1,2,3)$&$0$&2&0&2&$ 4\times(1,2),\ 3\times(1,3) $\\ \hline
1.8&3&66&$(1,2,5,22,33)$&$\frac{1}{11}(1,2,5)$&$0$&5&-1&2&$ (1,5),\ 4\times(1,2),\ (2,5) $\\ \hline
1.9&3&66&$(1,3,4,22,33)$&$\frac{1}{11}(1,3,4)$&$0$&4&-1&5&$ 2\times(1,4),\ (1,2) $\\ \hline
1.10&2&66&$(1,3,5,22,33)$&$\frac{1}{11}(1,3,5)$&$0$&2&0&2&$ (1,5),\ 3\times(1,3),\ (2,5) $\\ \hline
1.11&3&78&$(2,3,5,26,39)$&$\frac{1}{13}(2,3,5)$&$0$&2&0&5&$ (2,5),\ 4\times(1,2),\ (1,5) $\\ \hline
1.12&1&78&$(3,4,5,26,39)$&$\frac{1}{13}(3,4,5)$&$0$&0&1&2&$ 2\times(1,4),\ (1,5),\ 3\times(1,3),\ (1,2),\ (2,5) $\\ \hline

2.1&1&15&$(1,1,3,4,5)$&$\frac{1}{4}(1,1,1)$&$0$&3&-1&2&$ $\\ \hline
2.2&1&18&$(1,2,3,5,6)$&$\frac{1}{5}(1,2,1)$&$0$&2&0&2&$ 4\times(1,2),\ 3\times(1,3) $\\ \hline
2.3&2&24&$(1,3,3,7,8)$&$\frac{1}{7}(1,3,1)$&$0$&3&0&2&$ 9\times(1,3) $\\ \hline
2.4&3&30&$(1,3,4,9,10)$&$\frac{1}{9}(1,4,1)$&$0$&4&-1&5&$ 2\times(1,4),\ (1,2) $\\ \hline
2.5&4&36&$(1,3,5,11,12)$&$\frac{1}{11}(1,5,1)$&$0$&5&-1&2&$ (2,5),\ (1,5),\ 3\times(1,3) $\\ \hline
2.6&5&42&$(1,3,6,13,14)$&$\frac{1}{13}(1,6,1)$&$0$&6&-1&2&$ (1,6),\ 7\times(1,3),\ (1,2) $\\ \hline
2.7&7&51&$(1,3,7,16,17)$&$\frac{1}{16}(1,7,1)$&$0$&9&-3&8&$ $\\ \hline
2.8&1&24&$(2,3,3,7,8)$&$\frac{1}{7}(2,3,1)$&$0$&1&1&2&$ 9\times(1,3),\ 4\times(1,2) $\\ \hline
2.9&3&36&$(2,3,5,11,12)$&$\frac{1}{11}(2,5,1)$&$0$&2&0&5&$ (1,5),\ (2,5),\ 4\times(1,2) $\\ \hline
2.10&5&42&$(3,3,4,13,14)$&$\frac{1}{13}(3,4,1)$&$0$&3&1&2&$ 2\times(1,4),\ 15\times(1,3),\ (1,2) $\\ \hline
2.11&7&51&$(3,3,5,16,17)$&$\frac{1}{16}(3,5,1)$&$0$&4&1&2&$ (1,5),\ (2,5),\ 18\times(1,3) $\\ \hline
2.12&7&54&$(3,4,5,17,18)$&$\frac{1}{17}(4,5,1)$&$0$&3&0&2&$ 2\times(1,4),\ (1,5),\ (2,5),\ 3\times(1,3),\ (1,2) $\\ \hline

3.1&5&33&$(1,1,6,9,11)$&$\frac{1}{9}(1,1,2)$&$0$&18&-5&2&$ (1,6),\ (1,2),\ (1,3) $\\ \hline
3.2&5&39&$(1,3,6,11,13)$&$\frac{1}{11}(1,3,2)$&$0$&6&-1&2&$ (1,6),\ (1,2),\ 7\times(1,3) $\\ \hline
3.3&5&45&$(1,5,6,13,15)$&$\frac{1}{13}(1,5,2)$&$0$&4&-1&10&$ (1,6),\ (1,2),\ (1,3) $\\ \hline
3.4&7&57&$(3,5,6,17,19)$&$\frac{1}{17}(3,5,2)$&$0$&2&1&2&$ (2,5),\ (1,6),\ (1,2),\ (1,5),\ 10\times(1,3) $\\ \hline
4.1&1&18&$(1,1,2,4,9)$&$\frac{1}{4}(1,1,1)$&$0$&4&-1&2&$ 4\times(1,2) $\\ \hline
4.2&1&22&$(1,2,2,5,11)$&$\frac{1}{5}(1,2,1)$&$0$&3&0&2&$ 12\times(1,2) $\\ \hline
4.3&2&30&$(1,2,3,7,15)$&$\frac{1}{7}(1,3,1)$&$0$&4&-1&2&$ 3\times(1,3) $\\ \hline
4.4&3&38&$(1,2,4,9,19)$&$\frac{1}{9}(1,4,1)$&$0$&6&-1&2&$ 2\times(1,4),\ 9\times(1,2) $\\ \hline
4.5&4&46&$(1,2,5,11,23)$&$\frac{1}{11}(1,5,1)$&$0$&7&-2&2&$ (2,5),\ (1,5) $\\ \hline
4.6&5&54&$(1,2,6,13,27)$&$\frac{1}{13}(1,6,1)$&$0$&9&-2&2&$ (1,6),\ 9\times(1,2),\ (1,3) $\\ \hline
4.7&9&82&$(1,2,9,20,41)$&$\frac{1}{20}(1,9,1)$&$0$&16&-5&10&$ 4\times(1,2) $\\ \hline
4.8&1&30&$(2,2,3,7,15)$&$\frac{1}{7}(2,3,1)$&$0$&2&1&2&$ 3\times(1,3),\ 16\times(1,2) $\\ \hline
4.9&3&46&$(2,2,5,11,23)$&$\frac{1}{11}(2,5,1)$&$0$&4&1&2&$ (1,5),\ (2,5),\ 24\times(1,2) $\\ \hline
4.10&7&70&$(2,2,7,17,35)$&$\frac{1}{17}(2,7,1)$&$0$&9&0&11&$ 36\times(1,2) $\\ \hline
4.11&5&54&$(2,3,4,13,27)$&$\frac{1}{13}(3,4,1)$&$0$&5&0&2&$ 2\times(1,4),\ 3\times(1,3),\ 13\times(1,2) $\\ \hline
4.12&7&66&$(2,3,5,16,33)$&$\frac{1}{16}(3,5,1)$&$0$&6&-1&2&$ (1,5),\ (2,5),\ 3\times(1,3),\ 4\times(1,2) $\\ \hline
4.13&3&54&$(2,4,5,13,27)$&$\frac{1}{13}(4,5,1)$&$0$&2&1&2&$ 2\times(1,4),\ (1,5),\ (2,5),\ 13\times(1,2) $\\ \hline


5.1&3&32&$(1,1,4,7,16)$&$\frac{1}{7}(1,1,2)$&$0$&10&-3&2&$ (1,2),\ 2\times(1,4) $\\ \hline
5.2&5&48&$(1,3,4,11,24)$&$\frac{1}{11}(1,3,2)$&$0$&8&-2&2&$ (1,2),\ 3\times(1,3),\ 2\times(1,4) $\\ \hline
5.3&1&48&$(3,4,5,11,24)$&$\frac{1}{11}(3,5,2)$&$0$&0&1&2&$ (1,5),\ (1,2),\ (2,5),\ 3\times(1,3),\ 2\times(1,4) $\\ \hline

6.1&5&46&$(1,1,6,10,23)$&$\frac{1}{10}(1,1,3)$&$0$&17&-5&2&$ (1,6),\ (1,3),\ (1,2) $\\ \hline
6.2&5&50&$(1,2,6,11,25)$&$\frac{1}{11}(1,2,3)$&$0$&9&-2&2&$ (1,6),\ (1,3),\ 9\times(1,2) $\\ 
\end{longtable}
}
}

\section{A nefness criterion for blowing up $2$ points}

It is natural to ask whether there exists a nefness criterion when blowing up more than one non-canonical singularities on a weighted hypersurface. Unfortunately, the situation will be more complicated. Here we provide a theorem which handles two points case assuming that the two points are ``strongly'' related to each other. We expect that the next theorem has interesting applications. 

\begin{thm}[Nefness criterion II]\label{nef pbf 2}
Let $X\subset \mathbb{P}(b_1, \dots, b_{n+2})$ be an $n$-dimensional well-formed quasismooth general hypersurface of degree $d$ with $\alpha=d-\sum_{i=1}^{n+2}b_i>0$, where $b_1,\dots, b_{n+2}$ are not necessarily ordered by size. 
\begin{itemize}
\item Denote by $x_1,\dots,x_{n+2}$ the homogenous coordinates of $\mathbb{P}(b_1, \dots, b_{n+2})$. Denote by $\ell_1$ and $\ell_2$ the lines $(x_1=x_2=\dots =x_{n}=0)$ and $(x_1=x_2=\dots =x_{n-1}=x_{n+1}=0)$ in $\mathbb{P}(b_1, \dots, b_{n+2})$ respectively. Suppose that $X\cap\ell_1$ and $X\cap \ell_2$ consist of finitely many points and take $Q_1\in X\cap \ell_1$, $Q_2\in X\cap \ell_2$.

\item Assume that $X$ has $2$ cyclic quotient singularities at $Q_1$ of type $\frac{1}{r_1}(e_1,\dots,e_{n-1}, e_n)$ and $Q_2$ of type $\frac{1}{r_2}(f_1,\dots,f_{n-1}, f_{n})$ where $e_i, f_i$ are positive integers, $\gcd(e_1, \dots, e_n)=1$, $\gcd(f_1, \dots, f_n)=1$, $\sum_{i=1}^ne_i<r_1$, $\sum_{i=1}^n f_i<r_2$. 

\item Assume further that $x_1,\dots, x_n$ are the local coordinates of $Q_1$ corresponding to the weights $\frac{e_1}{r_1},\dots,\frac{e_n}{r_1}$ and $x_1,\dots, x_{n-1}, x_{n+1}$ are the local coordinates of $Q_2$ corresponding to the weights $\frac{f_1}{r_2},\dots,\frac{f_n}{r_2}$. 
\end{itemize}

Take $\pi: Y\to X$ to be the weighted blow-up at $Q_1$ and $Q_2$ with weights $(e_1,\dots,e_n)$ and $(f_1,\dots,f_n)$.
Suppose that the following conditions hold: 
\begin{enumerate}
\item $\alpha e_{j} \geq b_{j}(r_1-\sum_{i=1}^ne_i)$ for each $j\in \{1, \dots, n-1\}$;
\item $\alpha f_{j} \geq b_{j}(r_2-\sum_{i=1}^nf_i)$ for each $j\in \{1, \dots, n-1\}$;
\item $\frac{\alpha d}{b_nb_{n+1}b_{n+2}}\geq \frac{r_1-\sum_{i=1}^ne_i}{r_1e_n}+\frac{r_2-\sum_{i=1}^nf_i}{r_2f_n}$;
\item a general hypersurface of degree $d$ in $\mathbb{P}(b_{n}, b_{n+1}, b_{n+2})$ is irreducible;
\item $\mathbb{P}(e_1,\dots,e_n)$ and $\mathbb{P}(f_1,\dots,f_n)$ are well-formed.
\end{enumerate}
Then $K_Y$ is nef and $\nu(Y)\geq n-1$. 
\end{thm}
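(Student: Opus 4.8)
The plan is to mimic the proof of Theorem~\ref{nefness}, adapting the single-point argument to handle two centers $Q_1,Q_2$ simultaneously. First I would set up notation exactly as in the one-point case: for $j=1,\dots,n-1$ let $H_j\subset X$ be the Weil divisor $(x_j=0)$, so $H_j\sim b_jL$ and $K_X\sim\alpha L$ where $L$ corresponds to $\OO_X(1)$. Let $E_1,E_2$ be the two exceptional divisors of $\pi$, with $\pi^*H_j=H'_j+\frac{e_j}{r_1}E_1+\frac{f_j}{r_2}E_2$ and $K_Y=\pi^*K_X-\frac{r_1-\sum e_i}{r_1}E_1-\frac{r_2-\sum f_i}{r_2}E_2$. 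Conditions (1) and (2) give, writing $t_j=\frac{\alpha e_j-b_j(r_1-\sum e_i)}{b_jr_1}\geq 0$ and $s_j=\frac{\alpha f_j-b_j(r_2-\sum f_i)}{b_jr_2}\geq 0$, the $\bQ$-linear equivalences $K_Y\sim_\bQ\frac{\alpha}{b_j}H'_j+t_jE_1+s_jE_2$ for each $j=1,\dots,n-1$.

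Next I would run the contradiction argument. Suppose $K_Y\cdot C<0$ for some irreducible curve $C$. Since $K_Y|_{E_i}=\frac{r_i-\sum(\cdot)}{r_i}(-E_i)|_{E_i}$ is ample for $i=1,2$ (using (5), well-formedness of the exceptional $\bP(\cdots)$'s), we get $C\not\subset E_1\cup E_2$. Then the displayed equivalence for each $j$ forces $C\subset H'_j$ for all $j=1,\dots,n-1$, so $C\subset\bigcap_{j=1}^{n-1}H'_j$. As in the original proof, I would argue that $\mathrm{Supp}(\bigcap_{j=1}^{n-1}H'_j)$ is a single irreducible curve: its image under $\pi$ is $\bigcap H_j$, a general degree-$d$ hypersurface in $\mathbb{P}(b_n,b_{n+1},b_{n+2})$, irreducible by condition (4); and the intersection with $E_1$ (resp.\ $E_2$) is just the point $[0:\dots:0:1]$ in $\bP(e_1,\dots,e_n)$ (resp.\ in $\bP(f_1,\dots,f_n)$) — here one uses that $x_1,\dots,x_{n-1}$ vanish on the relevant coordinate points and the choice of local coordinates at $Q_1,Q_2$. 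Hence $\bigcap_{j=1}^{n-1}H'_j=tC$ as $1$-cycles for some $t>0$, so $K_Y\cdot H'_1\cdots H'_{n-1}<0$.

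Then I would compute $K_Y\cdot H'_1\cdots H'_{n-1}$ by expanding everything into $\pi^*$-terms and $E_1,E_2$. Since $E_1\cdot E_2=0$ (the two blow-ups are at distinct points, so the exceptional loci are disjoint), and intersections mixing $\pi^*(\cdot)$ with a single $E_i$ vanish because $E_i$ is $\pi$-exceptional, the only surviving terms are $\alpha\big(\prod_{j=1}^{n-1}b_j\big)L^n$ and the two ``pure $E_i$'' terms. Using $L^n=\frac{d}{\prod b_j}$, $E_i^n=\frac{(-r_i)^{n-1}}{\prod(\text{weights})}$ and $\OO_Y(E_i)|_{E_i}=\OO(-r_i)$, this evaluates to $\frac{\alpha d}{b_nb_{n+1}b_{n+2}}-\frac{r_1-\sum e_i}{r_1e_n}-\frac{r_2-\sum f_i}{r_2f_n}$, which is $\geq 0$ by condition (3) — a contradiction. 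This gives nefness, and $\nu(Y)\geq n-1$ follows from $K_Y^{n-1}\cdot E_1>0$ since $K_Y|_{E_1}$ is ample. The main obstacle I anticipate is the irreducibility/support argument for $\bigcap_{j=1}^{n-1}H'_j$: unlike the single-point case, one must check that the two ``tails'' of this scheme lying over $Q_1$ and over $Q_2$ are each a single point on the respective exceptional divisor, and that no unexpected extra component appears when two special points lie on the curve $\bigcap H_j$ — this requires carefully tracking the local coordinate hypotheses at both $Q_1$ and $Q_2$ and confirming that $\ell_1\cap X$ and $\ell_2\cap X$ interact with the $H_j$ as in the one-point picture.
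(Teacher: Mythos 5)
Your proposal is correct and follows essentially the same route as the paper's own proof: the same $\mathbb{Q}$-linear equivalences $K_Y\sim_{\mathbb{Q}}\frac{\alpha}{b_j}H'_j+t_jE_1+s_jE_2$ forcing a $K_Y$-negative curve into $\bigcap_{j=1}^{n-1}H'_j$, the same irreducibility-of-support claim via condition (4) and the single-point intersections with $E_1$, $E_2$, and the same intersection-number computation reducing nefness to condition (3), with $\nu(Y)\geq n-1$ from ampleness of $K_Y|_{E_1}$. The points you flag as potential obstacles (disjointness of the exceptional divisors and the two point-intersections over $Q_1$, $Q_2$) are handled in the paper exactly as you describe, so no gap remains.
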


\begin{proof}
For each $j=1,\dots, n-1$, let $H_j\subset X$ be the effective Weil divisor defined by $x_j=0$ and denote by $L$ a Weil divisor corresponding to $\mathcal{O}_X(1)$. Then $H_j\sim b_j L$ and $K_X\sim \alpha L$. 
Denote by $H'_j$ the strict transform of $H_j$ on $Y$ and by $E_1$, $E_2$ the exceptional divisors of $\pi$ over $Q_1$, $Q_2$. Then 
$$\pi^*H_j=H'_j+\frac{e_j}{r_1}E_1+\frac{f_j}{r_2}E_2.$$ 
Set $t_j=\frac{\alpha e_j-b_j(r_1-\sum_{i=1}^ne_i)}{b_jr_1}$ and $s_j=\frac{\alpha f_j-b_j(r_2-\sum_{i=1}^nf_i)}{b_jr_2}$. Then, for each $j=1,\dots, n-1$, $t_j\geq 0$ and $s_j\geq 0$ by assumption.
As $K_Y=\pi^*K_X-\frac{r_1-\sum_{i=1}^ne_i}{r_1}E_1-\frac{r_2-\sum_{i=1}^nf_i}{r_2}E_2$ and $K_X\sim \frac{\alpha}{b_j}H_j$, 
we can see that 
\begin{align}\label{K=H+E 2}
K_Y\sim_\mathbb{Q} \frac{\alpha}{b_j}H'_j+t_j E_1+s_j E_2
\end{align}
for each $j=1,\dots, n-1$.

Assume, to the contrary, that $K_Y$ is not nef. Then there exists a curve $C$ on $Y$ such that $(K_Y\cdot C)<0$.
Note that $K_Y|_{E_1}=\frac{r_1-\sum_{i=1}^ne_i}{r_1}(-E_1)|_{E_1}$ and $K_Y|_{E_2}=\frac{r_2-\sum_{i=1}^nf_i}{r_2}(-E_2)|_{E_2}$ are ample, hence $C\not \subset E_1\cup E_2$. Therefore Equation \eqref{K=H+E 2} implies that $C\subset \cap_{j=1}^{n-1} H'_j$. 

We claim that $\text{Supp}(\cap_{j=1}^{n-1} H'_j)=C$. It suffices to show that $\text{Supp}(\cap_{j=1}^{n-1} H'_j)$ is an irreducible curve.
Note that $\pi (\text{Supp}(\cap_{j=1}^{n-1} H'_j))=\cap_{j=1}^{n-1} H_j$ is a general hypersurface of degree $d$ in $\mathbb{P}(b_n, b_{n+1}, b_{n+2}),$ hence $\pi (\text{Supp}(\cap_{j=1}^{n-1} H'_j))$ is an irreducible curve by assumption. On the other hand, the support of $\cap_{j=1}^{n-1} H'_j\cap E_1$ is just the point $[0:\dots : 0: 1]$ in $E_1\simeq \mathbb{P}(e_1,\dots, e_n)$ and the support of $\cap_{j=1}^{n-1} H'_j\cap E_2$ is just the point $[0:\dots : 0: 1]$ in $E_2\simeq \mathbb{P}(f_1,\dots, f_n)$. So $\text{Supp}(\cap_{j=1}^{n-1} H'_j)$ is just the strict transform of $\pi (\text{Supp}(\cap_{j=1}^{n-1} H'_j))$, which is an irreducible curve.

Therefore, we can write $(H'_1\cdot \dots \cdot H'_{n-1})=t C$ for some $t>0$ as $1$-cycles. Then $(K_Y\cdot C)<0$ implies that 
$(K_Y\cdot H'_1\cdot \dots \cdot H'_{n-1} )<0$. On the other hand, 
{\begin{align*}{}&(K_Y\cdot H'_1\cdot \dots \cdot H'_{n-1} )\\
={}&{\big(}(\pi^*K_X-\frac{r_1-\sum_{i=1}^ne_i}{r_1}E_1-\frac{r_2-\sum_{i=1}^nf_i}{r_2}E_2)\cdot (\pi^*H_1- \frac{e_1}{r_1}E_1- \frac{f_1}{r_2}E_2)\\
{}&\cdot \cdots \cdot (\pi^*H_{n-1}- \frac{e_{n-1}}{r_1}E_1- \frac{f_{n-1}}{r_2}E_2){\big)}\\
={}&\alpha (\prod_{j=1}^{n-1}b_j ) L^{n}+(-1)^n\frac{(r_1-\sum_{i=1}^ne_i)\prod_{j=1}^{n-1}e_j}{r_1^n}E_1^n\\
{}&+(-1)^n\frac{(r_2-\sum_{i=1}^nf_i)\prod_{j=1}^{n-1}f_j}{r_2^n}E_2^n\\
={}&\frac{\alpha d}{b_nb_{n+1}b_{n+2}}-\frac{r_1-\sum_{i=1}^ne_i}{r_1e_n}-\frac{r_2-\sum_{i=1}^nf_i}{r_2f_n}\geq 0,
\end{align*}}
a contradiction.

Hence we conclude that $K_Y$ is nef. The fact that $\nu(Y)\geq n-1$ follows from $(K_Y^{n-1}\cdot E_1)>0$ as $K_Y|_{E_1}$ is ample.
\end{proof}

\section*{\bf Acknowledgments}

The authors would like to thank J\'anos Koll\'ar and Miles Reid for their useful comments on relevant topics. Part of this paper was written while the authors were enjoying the activity ``Workshop on Algebraic Geometry" at Xiamen University in Nov. 2019. The authors appreciate the hospitality of Xiamen University. The first author is a member of the Key Laboratory of Mathematics for Nonlinear Sciences, Fudan university. We would like to thank the referees for comments and suggestions.

\end{document}